\newtheorem{theorem}{Theorem}[section]
\newtheorem{proposition}[theorem]{Proposition}
\newtheorem{lemma}[theorem]{Lemma}
\newtheorem{corollary}[theorem]{Corollary}
\newtheorem{question}[theorem]{Question}
\theoremstyle{definition}
\newtheorem{definition}[theorem]{Definition}
\newtheorem{example}[theorem]{Example}
\newtheorem{remark}[theorem]{Remark}
\numberwithin{equation}{section}
\DeclareMathOperator{\cl}{cl}
\DeclareMathOperator{\fs}{fs}
\DeclareMathOperator{\gal}{Gal}
\DeclareMathOperator{\SL}{SL}
\DeclareMathOperator{\GL}{GL}
\DeclareMathOperator{\Span}{Span}
\DeclareMathOperator{\supp}{supp}
\newcommand{\m}{\mathfrak{m}}
\newcommand{\M}{\mathcal{M}}
\newcommand{\Q}{\mathbb{Q}}
\newcommand{\R}{\mathbb{R}}
\newcommand{\C}{\mathbb{C}}
\newcommand{\Z}{\mathbb{Z}}
\newcommand{\T}{\mathbb{T}}
\newcommand{\N}{\mathbb{N}}
\newcommand{\PP}{\mathcal{P}}
\newcommand\restr[2]{{
  \left.\kern-\nulldelimiterspace
  #1
  \right|_{#2}}}
\begin{document}

\title[Density of finitely supported invariant measures]{Density of finitely supported invariant measures for automorphisms of compact abelian groups}
\author{Rotem Yaari}
% \date{\today}
\address{Rotem Yaari, Department of Pure Mathematics, Tel Aviv University, Israel}
\email{rotemnaory@mail.tau.ac.il}

\begin{abstract}
We study the structure of invariant measures for continuous automorphisms of compact metrizable abelian groups satisfying the descending chain condition. We show that the finitely supported invariant measures are weak-* dense in the space of all invariant probability measures, and that if the system is ergodic with respect to Haar measure, then the finitely supported ergodic invariant measures are also dense. A key ingredient in the proof is a variant of the specification property, which we establish for the ergodic systems in this class.
Our results also yield the following two consequences: first, that every finitely generated group of the form $\Z\ltimes G$, where $G$ is a countable abelian group, is Hilbert-Schmidt stable; and second, a Livshitz-type theorem characterizing the uniform closure of coboundaries arising from continuous functions in terms of vanishing on periodic orbits.
We also construct an example showing that, in general dynamical systems, the property of having dense finitely supported invariant measures does not pass to product systems.
\end{abstract}

% \keywords{a}

% \subjclass{1}

\maketitle

\section{Introduction}
\subsection*{Dense periodic measures and specification}
By a \emph{dynamical system} we mean a pair $(X,T)$ where $X$ is a compact metrizable space and $T:X\to X$ a homeomorphism.
In this paper we are primarily interested in dynamical systems consisting of a compact metrizable abelian group equipped with a continuous (group) automorphism. Such systems possess a richer structure than general dynamical systems, while still exhibiting interesting dynamical behavior.
For brevity, we will refer to such a dynamical system as an \emph{abelian group dynamical system}. 
We say that $x\in X$ is of \emph{period} $n\in \N$ if $T^n x = x$. Note that we do not require $n$ to be the least period of $x$, so that in particular $x$ is also of period $mn$ for every $m\in \N$. $x\in X$ is called \emph{periodic} if it is of some period $n\in\N$. Here and throughout the paper, we adopt the convention $\N=\{1,2,\dots\}$ and $\N_0=\N\cup\{0\}$.
The behavior of periodic points often captures key features of a dynamical system, such as its chaotic nature and entropy, thereby making their study a central problem in dynamics.
A basic question one can ask is whether they are dense in $X$.
However, even in the case that $(X,T)$ is an abelian group dynamical system, there are examples where the only periodic point in $X$ is its identity element \cite{aoki}, \cite[Examples 5.6]{schmidt_book}. It is therefore necessary to impose an additional assumption on the system, and a natural and useful one in this context is the \emph{descending chain condition} (\emph{dcc}); see Definition \ref{def dcc} below. In abelian group dynamical systems that satisfy the dcc, the periodic points are known to be dense \cite{laxton, kitchens} (see also \cite[Theorem 5.7]{schmidt_book}).

A common theme in dynamics is to adapt such questions from the point-set setting to the framework of invariant measures on the system\footnote{Most notably, Furstenberg’s well-known $\times 2,\times 3$ conjecture is the measure-theoretic analogue of his result stating that the only closed infinite 
$\times 2,\times 3$-invariant subset of the torus $\T$ is $\T$ itself \cite{furstenberg}.}.
For a dynamical system $(X,T)$, $\M(X)$ will denote throughout the paper the collection of Borel probability
measures on $X$ and $\M_T(X)$ will denote the subset of $\M(X)$ that consists of the $T$-invariant measures. We endow $\M(X)$ with the weak-* topology, which turns it into a metrizable compact space, and turns $\M_T(X)$ into a closed subspace.
Following the terminology of \cite{levit_vigdorovich}, we will say that $(X,T)$ has \emph{dense periodic measures} if the finitely supported measures in $\M_T(X)$ are dense in $\M_T(X)$. We will also say that $(X,T)$ has \emph{dense ergodic periodic measures} if the finitely supported \textit{ergodic} measures are dense in $\M_T(X)$.
We can now ask under what conditions a dynamical system has dense (ergodic) periodic measures.
If there exists a measure in $\mathcal{M}_T(X)$ whose support is all of $X$, then a necessary condition is that the periodic points are dense. This applies to abelian group dynamical systems, as the Haar measure on $X$ is always $T$-invariant. However, there exist abelian group dynamical systems with dense periodic points that do not admit dense periodic measures \cite{eckhardt}.
These questions have attracted the attention of many authors, who have provided affirmative answers in various settings, see \cite{sigmund, coudene,nonuniform, weak_and_entropy, on_density} to name a few.
In the setting of abelian group dynamical systems, it follows from the work of Parthasarathy that Bernoulli shifts over groups have dense ergodic periodic measures \cite{Parthasarathy}, and Levit and Vigdorovich showed that more general group shift actions also admit dense periodic measures \cite[Proposition 8.6]{levit_vigdorovich} (with the obvious adaptation of the definition to actions of groups more general than $\Z$). 
Hyperbolic toral automorphisms were shown to have dense ergodic periodic measures by Lind \cite{lind_spec}. This result was later generalized by Marcus \cite{marcus} to toral automorphisms that are ergodic with respect to Haar measure, and to hyperbolic automorphisms of solenoids satisfying the dcc in \cite{solenoidal}.

Our first main result completes the picture for the class of abelian group dynamical systems that satisfy the dcc, answering a question posed by Levit and Vigdorovich \cite[Question 2]{levit_vigdorovich} and also of Eckhardt \cite[p.\ 2]{eckhardt}.
\begin{theorem}\label{thm dpm intro}
    Let $(X,T)$ be an abelian group dynamical system that satisfies the dcc. Then $(X,T)$ has dense periodic measures. Moreover, if $T$ is ergodic with respect to the Haar measure on $X$ then $(X,T)$ has dense ergodic periodic measures.
\end{theorem}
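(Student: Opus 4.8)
The strategy is to pass to Pontryagin duals. Writing $M=\widehat{X}$ for the (discrete, countable) dual group, the transpose of $T$ makes $M$ a module over the Laurent polynomial ring $R=\Z[t^{\pm 1}]$; under this dictionary the dcc for $(X,T)$ is equivalent to $M$ being Noetherian over $R$, and the Haar-ergodicity of $T$ is equivalent to multiplication by $t^{n}-1$ being injective on $M$ for every $n\in\N$ (equivalently, to $T^{n}-1$ being surjective on $X$), as in \cite{schmidt_book}. I would set up this correspondence first and then prove the two assertions of Theorem~\ref{thm dpm intro} in the reverse of the order stated: first the ergodic case, which is the analytic core, and then a structural reduction of the general case to it.

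For the ergodic case the engine is the variant of the specification property announced in the abstract. What is needed is, roughly: for every $\varepsilon>0$ there is a gap length $\ell(\varepsilon)\in\N$ such that any finite list of orbit segments can be $\varepsilon$-shadowed --- up to a small, controlled correction in the non-expansive directions --- by a single \emph{periodic} point whose period is the total length of the segments together with gaps of length $\ell(\varepsilon)$ inserted between them. Granting this, dense ergodic periodic measures follow by a Sigmund-type argument: to approximate a finite convex combination $\sum_{i}\lambda_{i}\nu_{i}$ of ergodic measures, choose long Birkhoff-generic orbit segments for the $\nu_{i}$, take them in the proportions $\lambda_{i}$, glue them by the specification property, and close the concatenation into a single periodic orbit; the resulting normalized orbital average is weak-$*$ close to $\sum_{i}\lambda_{i}\nu_{i}$ because the gaps occupy a vanishing fraction of the (long) period, and it is a finitely supported ergodic invariant measure. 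Since finite convex combinations of ergodic measures are weak-$*$ dense in $\M_{T}(X)$, this gives the second statement. I would carry this out checking that the ``variant'' features --- the shadowing error and the slight inexactness of the period --- do not spoil the convergence; note that the availability of periodic orbits of approximately prescribed period is itself a consequence of ergodicity, since surjectivity of $T^{n}-1$ allows the relevant affine return maps to be closed up into periodic orbits.

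For the first statement, take an arbitrary $\mu\in\M_{T}(X)$. Iterating the operation $M'\mapsto\{m\in M:(t^{n}-1)m\in M'\text{ for some }n\in\N\}$ from $M'=0$ yields an ascending chain of submodules that stabilizes, by Noetherianity, at a submodule $M_{\infty}\subseteq M$ that is annihilated by some product $\prod_{j}(t^{a_{j}}-1)$ and such that $t^{n}-1$ is injective on $M/M_{\infty}$ for every $n$. Dually, $X$ has a closed $T$-invariant subgroup $X_{\mathrm{e}}=(M_{\infty})^{\perp}$ on which $T$ acts Haar-ergodically, while the quotient $Z=X/X_{\mathrm{e}}$, dual to $M_{\infty}$, is a compact group of the form $\T^{a}\times(\text{finite group})$ on which the induced automorphism is quasi-unipotent; for such $Z$ the finitely supported invariant measures are dense in $\M_{T}(Z)$ by elementary means (rational approximation together with symmetrization over finite orbits). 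Let $\pi\colon X\to Z$ be the quotient map and disintegrate $\mu=\int_{Z}\mu_{z}\,d(\pi_{*}\mu)(z)$. I would approximate $\pi_{*}\mu$ by a finitely supported invariant measure on $Z$ and lift it: over each periodic orbit $O=\{z_{0},\dots,T^{k-1}z_{0}\}$ occurring in the approximation, the fibre $\pi^{-1}(z_{0})$ is a coset of $X_{\mathrm{e}}$ on which $T^{k}$ acts by an affine map whose linear part is the Haar-ergodic automorphism $T^{k}|_{X_{\mathrm{e}}}$ (again Haar-ergodic, as a power of a Haar-ergodic automorphism); since $T^{k}-1$ is surjective on $X_{\mathrm{e}}$ this affine map has a fixed point, so $\mu_{z_{0}}$ corresponds, after an affine change of coordinates, to a $T^{k}$-invariant measure on $X_{\mathrm{e}}$, which by the already-established ergodic case applied to $(X_{\mathrm{e}},T^{k})$ is weak-$*$ approximated by finitely supported invariant measures. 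Spreading such an approximation around the orbit $O$ and summing over $O$ with the weights dictated by the approximation to $\pi_{*}\mu$ produces a finitely supported invariant measure on $X$ whose $\pi$-image and whose conditionals match those of $\mu$ up to $\varepsilon$, hence which is weak-$*$ close to $\mu$. Matching the merely Borel field $z\mapsto\mu_{z}$ to the finitely supported base requires a routine argument (Lusin's theorem and discarding a set of small $\pi_{*}\mu$-measure).

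The main obstacle is the specification variant for Haar-ergodic dcc systems. Such systems need not be expansive: a Haar-ergodic automorphism may have ``central'' directions --- eigenvalues on the unit circle that are not roots of unity --- and it may be solenoidal rather than toral, so the hyperbolic shadowing underlying the classical specification property of Lind \cite{lind_spec} and Marcus \cite{marcus} (and the solenoidal version in \cite{solenoidal}) is not directly available. The crux is to glue orbit segments while simultaneously steering these central coordinates; I expect this to require combining a Lind--Marcus-type shadowing argument in the expanding and contracting directions with an equidistribution/pigeonhole argument in the central directions --- which form a finite-dimensional quasi-translation factor --- using the inserted gaps to realign the central components before closing the orbit up, and then verifying that Sigmund's scheme survives the resulting weakened form of specification. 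Once the specification variant is available, both the ergodic case and, through the reduction above, the general case follow.
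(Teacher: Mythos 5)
Your outline of the ergodic half is essentially the paper's route: establish a specification-type property for Haar-ergodic dcc systems and run Sigmund's gluing argument. You correctly identify the obstacle (central directions and solenoidal, non-expansive factors); the paper resolves it by a structure theorem reducing to solenoids and Bernoulli shifts, proving a ``partial specification'' for solenoids with periods restricted to sets on which the central eigenvalues $\lambda^n$ stay away from $1$, and showing this property passes through group extensions. Your decomposition for the general case (an ergodic invariant subgroup $X_{\mathrm e}$ with quasi-unipotent quotient $Z$) also matches the paper's Proposition \ref{prop nonergodic structure}.

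The genuine gap is in the lifting step. Suppose you have approximated $\pi_*\mu$ by a measure on a periodic orbit $O=\{z_0,\dots,T^{k-1}z_0\}$ in $Z$ and chosen a finitely supported $T^k$-invariant measure $\lambda$ on the fibre over $z_0$ with $\lambda$ weak-* close to $\mu_{z_0}$. The $T$-invariant spread $\frac1k\sum_{i=0}^{k-1}T^i_*\lambda$ has conditional $T^i_*\lambda$ over $T^iz_0$, and weak-* closeness of $\lambda$ to $\mu_{z_0}$ gives no control whatsoever on the distance from $T^i_*\lambda$ to $T^i_*\mu_{z_0}=\mu_{T^iz_0}$ for $i$ up to $k-1$; moreover $k\to\infty$ is unavoidable whenever $\pi_*\mu$ is not already finitely supported (e.g.\ Haar measure on an irrational coset of a subtorus of $Z$). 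So the conditionals of your candidate measure do \emph{not} match those of $\mu$ up to $\varepsilon$ --- only the one over $z_0$ does --- and what you are really required to control is the joining between fibre and base, not the two marginals. Theorem \ref{thm product intro} of the paper shows this correlation problem is a genuine obstruction even for products of systems each having dense periodic measures, so no soft argument (Lusin, discarding small sets) can close it. The paper's Proposition \ref{prop spec and dpm} overcomes it by a period-matching mechanism: unipotent toral automorphisms satisfy a \emph{strong} form of dense periodic measures (approximating periodic orbits of every sufficiently large period in some fixed $c\N$, Proposition \ref{prop strong dpm} --- which itself rests on Ratner/Shah measure classification and Hahn's unique ergodicity, rather more than ``rational approximation plus symmetrization''), while partial specification in $X_{\mathrm e}$ can be arranged with periods inside any prescribed $c\N$; taking the two periods equal and using an interval-permutation version of tracing (Proposition \ref{prop interval perm}) lets one steer the fibre orbit block-by-block against the base orbit so that the joint orbit equidistributes toward $\mu$. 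Some device of this kind is missing from your proposal and would need to be supplied.
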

In particular, every automorphism of $\T^d$ has dense periodic measures. We note that this result is new even in the case where $T$ is an automorphism of $\T^d$ induced by a block-diagonal integer matrix with hyperbolic and unipotent blocks. While the result is known separately for hyperbolic and unipotent toral automorphisms, the challenge of extending dense periodic measures to products, discussed later, makes this case nontrivial.

To prove the second part of the theorem, we establish a specification-like result. The specification property, originally introduced by Bowen in his study of Axiom A diffeomorphisms \cite{bowen_spec}, is a powerful dynamical tool with numerous applications.
Roughly speaking, specification means that orbit segments of
multiple points can be approximated by a single, sometimes periodic, orbit. The specification
property has been extensively studied ever since. Many dynamical systems have been shown to
exhibit weaker forms of specification, which are nevertheless strong enough to yield interesting consequences; we refer to the comprehensive survey \cite{panorama} for an overview of the variations and applications of specification.
Similarly, the variant we introduce, called \emph{partial specification} (Definition \ref{def partial specification}), is strong enough to produce most, if not all, of the interesting consequences of specification, making the following result notable in its own right.
\begin{theorem}\label{thm spec intro}
    Let $(X,T)$ be an abelian group dynamical system, ergodic with respect to the Haar measure on $X$ and satisfying the dcc. Then $(X,T)$ satisfies partial specification.
\end{theorem}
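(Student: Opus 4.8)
The plan is to pass to Pontryagin duals. Writing $\widehat X$ for the dual of $X$, the automorphism $T$ makes $\widehat X$ a module over the Laurent ring $\Z[T^{\pm1}]$; the dcc translates into $\widehat X$ being a Noetherian (equivalently, since $\Z[T^{\pm1}]$ is Noetherian, finitely generated) $\Z[T^{\pm1}]$-module, and ergodicity with respect to Haar measure translates into $T^n-1$ acting injectively on $\widehat X$ for every $n\ge1$. A finitely generated module over the Noetherian ring $\Z[T^{\pm1}]$ admits a finite filtration $0=M_0\subset M_1\subset\cdots\subset M_k=\widehat X$ whose successive quotients are cyclic of the form $\Z[T^{\pm1}]/\mathfrak p_i$ with $\mathfrak p_i$ prime. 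Dualizing, $(X,T)$ is presented as an iterated extension of the ``cyclic'' algebraic systems $X_i$ dual to $\Z[T^{\pm1}]/\mathfrak p_i$, and each $X_i$ is again ergodic: a nonzero periodic character of $\widehat X$ would descend to a nonzero periodic character of the first $M_i/M_{i-1}$ that it meets. So the proof splits into (a) establishing partial specification for each cyclic system that can occur here, and (b) showing partial specification is inherited by abelian group extensions; part (b) then finishes the argument by induction on the filtration length.

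For (a) one enumerates the ergodic cyclic quotients. If $\mathfrak p\cap\Z=(p)\ne0$ then $\mathfrak p$ cannot contain any $T^n-1$, which forces $\Z[T^{\pm1}]/\mathfrak p=\mathbb F_p[T^{\pm1}]$, so $X_i$ is the full shift over $\mathbb F_p$; if $\mathfrak p=0$, then $X_i$ is the full shift over $\T$. Full shifts have the full specification property, hence partial specification. The remaining case is $\mathfrak p=(f)$ with $f\in\Z[T^{\pm1}]$ irreducible and non-cyclotomic, where $X_i$ is a solenoid and $T$ is dual to multiplication by a root $\xi$ of $f$; realizing $X_i$ as a quotient of $\prod_{v\in S}K_v$, where $K=\Q(\xi)$ and $S$ consists of the archimedean places together with the finite places at which $\xi$ is not a unit, one splits the ambient space into its expanding, contracting, and neutral parts according to whether $|\xi|_v$ is greater than, less than, or equal to $1$. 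On the non-neutral (hyperbolic) directions the system is expansive and hyperbolic, so mismatches between orbit segments are corrected by adding elements of the homoclinic subgroup, which is dense there; this is the Anosov-closing/homoclinic route to specification, as in \cite{lind_spec}. On the neutral part $T$ acts by isometries, so orbit segments never drift; combining this equicontinuity with density of periodic points (\cite{laxton,kitchens}) and a pigeonhole argument over approximate return times, one closes up a long orbit segment by a periodic orbit of comparable period, with error arbitrarily small on the hyperbolic directions and bounded — but not necessarily arbitrarily small — on the neutral directions. This is exactly the content of partial specification in this case, and the neutral directions are precisely the obstruction to full specification.

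For (b), let $0\to A\to X\xrightarrow{\pi} Y\to0$ be a $T$-equivariant short exact sequence of abelian group dynamical systems with $A$ closed and $T$-invariant, and suppose $(Y,\bar T)$ and $(A,T|_A)$ both satisfy partial specification. Given target orbit-segment data in $X$ with adequate gaps, push it forward to $Y$, solve the specification problem there to obtain a point $y$, and lift $y$ to $x_0\in X$ through a continuous (bounded, but not homomorphic) section of $\pi$. Then $x_0$ matches the targets up to an $A$-valued defect along each window, and — since the fibers of $\pi$ are cosets of $A$, so that adding an element of $A$ leaves the $Y$-component untouched — this defect is itself an orbit-segment datum in $A$, which can be cancelled using the partial specification of $(A,T|_A)$. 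Bookkeeping the two layers of error, and the two periods, yields partial specification for $(X,T)$; iterating over the filtration of part (a) proves the theorem.

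The main obstacle is step (b): specification-type properties are not inherited by extensions in general, and indeed the failure of ``dense periodic measures'' to pass even to products is flagged in the introduction. What rescues the argument is that our extensions are algebraic — the fibers are cosets of a subgroup $A$ on which $T$ again satisfies partial specification — so the correction term lives in a system that is already under control; the delicate points are to choose the section of $\pi$ and organize the two layers of error so that repairing the $A$-layer does not reintroduce error in the $Y$-layer, and to keep the period of the closed-up orbit comparable to the length of the input at every stage of the induction. A secondary difficulty, internal to (a), is the neutral direction of the solenoidal building blocks, which is handled by trading the drift-free equicontinuous dynamics there for a restriction to suitable near-return times rather than insisting on arbitrarily small error everywhere.
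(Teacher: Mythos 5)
Your overall architecture is the same as the paper's: filter the dual module by primes of $\Z[u^{\pm1}]$, identify the cyclic building blocks as Bernoulli shifts and solenoids, prove partial specification for each block, and propagate it through extensions. However, there are genuine gaps at the two hardest points. First, in step (b) you never resolve the period-matching problem, which is the central difficulty the whole notion is designed around. If the tracing point in the quotient has period $n$ and the correcting point in the subgroup has period $n'$, their ``sum'' is not periodic unless $n=n'$; the paper therefore requires both systems to satisfy partial specification with the \emph{same} set of periods $\mathcal{P}$, builds into the definition that for every $c$ one may take $\mathcal{P}\subseteq c\N$, and constructs a single common $\mathcal{P}$ for all solenoidal blocks at once as a ``bounded below'' set for the product of their characteristic polynomials (so that $\lvert\lambda^n-1\rvert_p$ is uniformly bounded away from $0$ for all unimodular eigenvalues $\lambda$ and all $n\in\mathcal{P}$). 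Your phrase ``bookkeeping the two periods'' does not supply this. Relatedly, lifting a periodic point of the quotient through a continuous section does not produce a periodic point of $X$; the paper instead uses the algebraic fact that ergodicity of $T$ on the subgroup makes $T^n-\mathrm{id}$ surjective there, so every $T^n$-invariant coset contains a $T^n$-fixed point.

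Second, two further steps would fail as written. (i) Your treatment of the neutral (central) directions of the solenoid, with error ``bounded but not necessarily arbitrarily small,'' does not establish partial specification: the definition demands error $<\varepsilon$ on a $(1-\varepsilon)$-fraction of indices in \emph{every} direction, and without that the downstream density-of-periodic-measures argument collapses. The paper makes the central error small precisely via the restriction of periods to the bounded-below set, which forces $(I-A^n)^{-1}$ to range over a compact family on the central subspace. (ii) In the extension step, the $A$-valued defect along each window is not an orbit segment of $A$ but only a \emph{pseudo}-orbit with a small fraction of large jumps (because the tracing in the quotient is itself only partial), so it cannot be fed directly into the specification machinery of $A$; the paper first proves that partial specification implies a partial shadowing property and uses that to convert the defect into genuine orbit segments. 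These are not cosmetic omissions: each is a point where the naive argument breaks and where the paper introduces a dedicated lemma.
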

One difficulty in working with dense periodic measures is that it is unclear whether this property persists under group extensions. This issue persists even in the following simpler case: given two abelian group dynamical systems with dense periodic measures, does their product admit dense periodic measures? Outside the realm of groups, we demonstrate that this is not true in general:
\begin{theorem}\label{thm product intro}
    There exist two dynamical systems with dense periodic measures whose product does not admit dense periodic measures.
\end{theorem}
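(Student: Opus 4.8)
The plan is to realize the two systems as explicit subshifts of $\{0,1\}^{\Z}$, exploiting the fact that a product of periodic orbits of coprime periods is again a single periodic orbit, whereas a suitable diagonal joining of the two factors escapes the closed convex hull of product measures.

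First I would fix an irrational $\alpha\in\R/\Z$ and let $\Omega_\alpha\subseteq\{0,1\}^{\Z}$ be the Sturmian subshift obtained by coding the rotation $R_\alpha\colon x\mapsto x+\alpha$ with respect to $[0,\alpha)$ and its complement; it is minimal and uniquely ergodic, its invariant measure $\nu_\alpha$ is non-atomic, and the coding map is a measure-theoretic isomorphism from $(\R/\Z,\mathrm{Leb},R_\alpha)$ onto $(\Omega_\alpha,\nu_\alpha,S)$. For each $a\ge 1$ I would take the odd integer $p_a$ nearest $2^a\alpha$, so $p_a/2^a$ is in lowest terms with $|p_a/2^a-\alpha|<2^{-a}$, and let $O^{(1)}_a$ be the periodic orbit (of period exactly $2^a$) obtained by coding $R_{p_a/2^a}$ against the same partition; then set $X_1:=\Omega_\alpha\cup\bigcup_{a\ge 1}O^{(1)}_a$. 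Since $p_a/2^a\to\alpha$ and Sturmian coding depends continuously on the rotation number at irrational parameters, the accumulation points of $\bigcup_a O^{(1)}_a$ all lie in $\Omega_\alpha$, so $X_1$ is a subshift. Replacing powers of $2$ by powers of $3$ gives a second subshift $X_2:=\Omega_\alpha\cup\bigcup_{b\ge 1}O^{(2)}_b\subseteq\{0,1\}^{\Z}$ whose periodic orbits $O^{(2)}_b$ have period $3^b$. The first thing to check is that the ergodic measures of $(X_i,S)$ are exactly the uniform measures $\mu^{(i)}_k$ on the periodic orbits together with $\nu_\alpha$, and that $\mu^{(i)}_k\to\nu_\alpha$ weak-$*$ (the latter by equidistribution of $\tfrac1{2^a}\Z/\Z$, resp.\ $\tfrac1{3^b}\Z/\Z$, in $\R/\Z$); it then follows that the finitely supported invariant measures are dense in $\M_S(X_i)$, i.e.\ each $(X_i,S)$ has dense periodic measures.

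Next I would analyze $(X_1\times X_2,\,S\times S)$. A point $(x_1,x_2)$ is periodic for $S\times S$ exactly when $x_1$ and $x_2$ are both periodic, hence $x_1\in O^{(1)}_a$, $x_2\in O^{(2)}_b$ for some $a,b$ (the $\Omega_\alpha$-parts contain no periodic points); since $\gcd(2^a,3^b)=1$, the $(S\times S)$-orbit of $(x_1,x_2)$ has period $2^a3^b=|O^{(1)}_a|\cdot|O^{(2)}_b|$ and, by the Chinese remainder theorem, is the full product $O^{(1)}_a\times O^{(2)}_b$, so its uniform measure is the product $\mu^{(1)}_a\otimes\mu^{(2)}_b$. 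Hence every finitely supported $(S\times S)$-invariant measure on $X_1\times X_2$ is a convex combination of products $\mu^{(1)}_a\otimes\mu^{(2)}_b$, and so lies in the closed convex hull $\mathcal C$ of $\{\rho_1\otimes\rho_2:\rho_1\in\M_S(X_1),\ \rho_2\in\M_S(X_2)\}$. To finish, I would exhibit an invariant measure outside $\mathcal C$: since $\Omega_\alpha\subseteq X_1$ and $\Omega_\alpha\subseteq X_2$, the diagonal $\Delta=\{(y,y):y\in\Omega_\alpha\}$ is $(S\times S)$-invariant, and the push-forward $\lambda$ of $\nu_\alpha$ under $y\mapsto(y,y)$ is an $(S\times S)$-invariant probability measure on $X_1\times X_2$ whose two marginals both equal $\nu_\alpha$, with $\lambda\neq\nu_\alpha\otimes\nu_\alpha$ (as $\nu_\alpha$ is non-atomic, $\lambda(\Delta)=1$ while $(\nu_\alpha\otimes\nu_\alpha)(\Delta)=0$). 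Because $\nu_\alpha$ is ergodic it is an extreme point of the compact convex set $\M_S(X_i)$, so the only probability measure on $\M_S(X_i)$ with barycenter $\nu_\alpha$ is $\delta_{\nu_\alpha}$; a short argument with representing measures then shows that the only element of $\mathcal C$ with both marginals equal to $\nu_\alpha$ is $\nu_\alpha\otimes\nu_\alpha$. Therefore $\lambda\notin\mathcal C$, so $\lambda$ is not a weak-$*$ limit of finitely supported invariant measures, and $(X_1\times X_2,S\times S)$ does not have dense periodic measures, which proves Theorem~\ref{thm product intro}.

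The genuinely soft steps here --- the Chinese-remainder description of the product periodic orbits, the extreme-point/representing-measure computation that pins down the elements of $\mathcal C$ with prescribed marginals, and the non-equality $\lambda\neq\nu_\alpha\otimes\nu_\alpha$ --- are routine. The step I expect to require the most care is the structural claim about $X_1$ and $X_2$: that forming the closure $\Omega_\alpha\cup\bigcup_a O^{(1)}_a$ introduces no points outside $\Omega_\alpha$, so that every ergodic measure is carried either by a single periodic orbit or by $\Omega_\alpha$, and in the latter case equals $\nu_\alpha$ by unique ergodicity. This reduces to continuity properties of the Sturmian coding as the rotation number varies --- standard facts, but they must be stated precisely to be sure no spurious invariant measures are created, since such measures could secretly make the product have dense periodic measures after all.
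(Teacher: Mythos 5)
Your overall strategy coincides with the paper's: build two subshifts whose periodic points have least periods that are powers of $2$ and of $3$ respectively, arrange a common aperiodic uniquely ergodic subsystem carrying a non-atomic measure, and show that the diagonal copy of that measure cannot be approximated by (convex combinations of) product measures. Your endgame is fine --- indeed the Bauer/representing-measure argument handles arbitrary convex combinations of products directly, whereas the paper first reduces to ergodic approximants via Phelps and then uses an elementary Portmanteau estimate (Lemma \ref{lemma product diagonal}). However, there is a genuine gap in the construction of $X_1$ and $X_2$, precisely at the step you flagged. The claim that all accumulation points of $\bigcup_a O^{(1)}_a$ lie in $\Omega_\alpha$ is false for the coding you describe, because you code the \emph{rational} rotation $R_{p_a/2^a}$ against the partition determined by the \emph{irrational} parameter $\alpha$. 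Concretely, the length-$m$ cylinders of the map $y\mapsto(\mathbf{1}_{[0,\alpha)}(y+j p_a/2^a))_{j<m}$ have endpoints $\{-jp_a/2^a\}\cup\{\alpha-jp_a/2^a\}$; since $\alpha\notin 2^{-a}\Z$, each Sturmian cylinder endpoint splits into a pair at distance $|\alpha-p_a/2^a|$, creating short ``spurious'' cylinders of length comparable to the orbit spacing $2^{-a}$. These are typically hit by the orbit (e.g.\ always, if the base point is $0$), and they carry words that are not factors of $\Omega_\alpha$: for $\alpha<1/2$ and $p_a/2^a<\alpha$ the word $11$ already occurs in $O^{(1)}_a$ but not in $\Omega_\alpha$. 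Hence $\Omega_\alpha\cup\bigcup_a O^{(1)}_a$ is not closed, and its closure contains bi-infinite words outside $\Omega_\alpha$, so the asserted classification of ergodic measures does not follow from ``continuity of the coding in the rotation number'' as stated.

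The gap is repairable, but it requires an argument you have not supplied. One route: show that the extra limit words are Sturmian words with a single altered letter, hence non-recurrent, hence null for every invariant measure, so the ergodic measures of the closure are still exactly $\{\mu^{(1)}_a\}\cup\{\nu_\alpha\}$. Alternatively, replace your coding by the mechanical (Christoffel) words of slope $p_a/2^a$, i.e.\ code $R_{p_a/2^a}$ against $[0,p_a/2^a)$; these are balanced words of least period exactly $2^a$, and a limit of balanced words of slopes tending to $\alpha$ is a balanced word of slope $\alpha$, hence Sturmian, which makes the structural claim clean. A third fix is to choose the base points so that the orbits avoid all the short cylinders. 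The paper avoids this issue entirely by a different device: it takes prefixes of the Thue--Morse sequence separated by an explicit marker symbol $a$, so that the classification of limit points (Lemma \ref{lemma elements of Xp}) is immediate --- either the marker occurs with bounded gaps (a shift of one of the periodic words), or it occurs at most once in the limit, forcing the point into the Thue--Morse subshift or a one-sided copy of it. In short: same architecture, correct final step, but the structural claim about $X_1,X_2$ needs to be either repaired or replaced by a construction (marker symbol, or balanced words) for which it actually holds.
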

Much of the proof presented in this paper is designed to circumvent this issue by using the stronger property of partial specification which behaves well under extensions (unlike some other specification-like properties \cite{lind_spec}).
\subsection*{Hilbert-Schmidt stability}
Much of the motivation for the present work comes from the stability theory of groups. Let $G$ be a countable group. Roughly speaking, $G$ is called \emph{Hilbert-Schmidt stable} if every almost homomorphism from $G$ into the unitary group lies near a genuine homomorphism from $G$ into the unitary group (see \cite[Section 9]{levit_vigdorovich} or
any other reference on the topic for a precise definition). The study of Hilbert–Schmidt stability is deeply intertwined with questions about hyperlinear groups and the structure of operator algebras \cite{hyperlinear}.
Among amenable groups, Hilbert-Schmidt stability was established for abelian groups by Glebsky \cite{glebsky2010}, and later extended to virtually abelian groups and the discrete Heisenberg group by Hadwin and Shulman \cite{hadmin_shulman1}. Levit and Vigdorovich \cite{levit_vigdorovich} proved this property for all finitely generated virtually nilpotent groups, as well as for many other metabelian groups. For finitely generated nilpotent groups, it was independently established by Eckhardt and Shulman \cite{eckhardt_shulman_amenable}.
Quantitative aspects of Hilbert-Schmidt stability for abelian and amenable groups were studied in \cite{glebsky2010} and \cite{characters}, respectively.

The following open question appears in \cite{levit_vigdorovich}.
\begin{question}[{\cite[Question 5]{levit_vigdorovich}}]\label{question metabelian}
    Are all finitely generated metabelian groups Hilbert-Schmidt stable? Equivalently, is every finitely generated group of the form $\Z^n\ltimes G$, where $n\in\N$ and $G$ is a countable abelian group, Hilbert-Schmidt stable?
\end{question}
Both works \cite{levit_vigdorovich, eckhardt_shulman_amenable} highlighted a connection between the Hilbert-Schmidt stability of metabelian groups and the density of periodic measures in abelian group dynamical systems, obtained by dualizing normal abelian subgroups of the original group. In light of their observation, the next result, which provides a step forward in addressing Question \ref{question metabelian}, follows almost immediately from Theorem \ref{thm dpm intro}.
\begin{corollary}\label{cor hs}
    Every finitely generated group of the form $\Z\ltimes G$, where $G$ is a countable abelian group, is Hilbert-Schmidt stable. 
\end{corollary}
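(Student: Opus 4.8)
The plan is to deduce the corollary from Theorem \ref{thm dpm intro} via the translation between Hilbert–Schmidt stability of metabelian groups and density of periodic measures for the dual abelian group dynamical system, as established by Levit–Vigdorovich \cite{levit_vigdorovich} and independently Eckhardt–Shulman \cite{eckhardt_shulman_amenable}. I would start with the structural reduction. Write $\Gamma=\Z\ltimes_\alpha G$, where $\alpha\in\mathrm{Aut}(G)$ is the action of a fixed generator $t$ of $\Z$, and view $G$ as a module over the group ring $\Z[t^{\pm 1}]=\Z[\Z]$. Since $\Gamma$ is finitely generated, it is generated by $t$ together with a finite set $H_0\subseteq G$ (any generating tuple can be adjusted to this form using that its image generates $\Gamma/G\cong\Z$); writing an arbitrary $g\in G$ as a word in $t^{\pm1}$ and $H_0^{\pm1}$, moving all occurrences of $t$ to one side through $tht^{-1}=\alpha(h)$, and using that the total exponent of $t$ must vanish because $g\in G$, one sees that $G$ is generated as a $\Z[t^{\pm1}]$-module by $H_0$. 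As $\Z[t^{\pm1}]$ is Noetherian (Hilbert basis theorem, together with the stability of the Noetherian property under quotients and localizations), $G$ is a Noetherian $\Z[t^{\pm1}]$-module.

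Next I would pass to the dual. Since $G$ is countable, $X:=\hat G$ is a compact metrizable abelian group and $T:=\hat\alpha$ a continuous automorphism, so $(X,T)$ is an abelian group dynamical system. Pontryagin duality provides an order-reversing bijection between closed $T$-invariant subgroups of $X$ and $\alpha$-invariant subgroups of $G$, so a strictly descending chain of the former would yield a strictly ascending chain of $\Z[t^{\pm1}]$-submodules of $G$, contradicting the previous paragraph; hence $(X,T)$ satisfies the dcc. The identical argument applies to $(\widehat{G/H},\hat\alpha)$ for every $\alpha$-invariant $H\le G$, since $G/H$ is again a finitely generated $\Z[t^{\pm1}]$-module, so the dcc is inherited by all these quotient systems. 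By Theorem \ref{thm dpm intro}, each of them has dense periodic measures, and dense ergodic periodic measures whenever the relevant automorphism is ergodic with respect to Haar measure.

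I would then feed this into the stability-to-dynamics dictionary. The group $\Gamma$ is amenable (an extension of $\Z$ by the abelian group $G$), so Hilbert–Schmidt stability of $\Gamma$ is equivalent to the characters arising from finite-dimensional unitary representations being weak-* dense in the space of all characters of $\Gamma$; the analysis in \cite{levit_vigdorovich,eckhardt_shulman_amenable} of the characters of metabelian groups of the form $\Z\ltimes G$ reduces this density statement to the density of (ergodic) periodic measures in the dual system $(\hat G,\hat\alpha)$ and its $\alpha$-invariant-quotient systems. Concretely, an ergodic periodic measure on $\hat G$ is the orbit measure of a finite $T$-orbit, whose stabilizer is a finite-index subgroup $\langle t^m\rangle$ of $\Z$; inducing a suitable finite-dimensional character of $\langle t^m\rangle\ltimes(\text{stabilized finite subgroup})$ up to $\Gamma$ realizes this orbit measure as the character of a finite-dimensional representation, while a general finitely supported invariant measure is a convex combination of such orbit measures. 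Combining this with the conclusion of the second paragraph yields the corollary.

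The substantive mathematics is entirely in Theorem \ref{thm dpm intro} (and hence in the partial specification property, Theorem \ref{thm spec intro}); what remains here is to confirm that the hypotheses of the Levit–Vigdorovich/Eckhardt–Shulman criterion are exactly matched by its conclusion — that is, that their reduction never requires density of periodic measures for a system outside the dcc class and never requires more than ``dense periodic measures'' (respectively ``dense ergodic periodic measures'' in the Haar-ergodic case). This is the one place where care is needed, but it is guaranteed by the permanence of the dcc under passing to $\alpha$-invariant quotients noted above, so the corollary follows almost immediately.
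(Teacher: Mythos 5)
Your proposal is correct and takes essentially the same route as the paper: finite generation of $\Z\ltimes G$ yields the dcc for the dual system $(\widehat{G},\widehat{\alpha})$ (the paper packages this as Lemma \ref{lemma dcc and fg}, citing Hall's theorem, while you argue directly from Noetherianity of $\Z[t^{\pm1}]$ --- the same underlying fact), after which Theorem \ref{thm dpm intro} gives dense periodic measures and the cited Levit--Vigdorovich/Eckhardt--Shulman equivalence gives Hilbert--Schmidt stability. Note that the equivalence invoked in the paper concerns only the single dual system $(\widehat{G},\widehat{\alpha})$, so your extra caution about $\alpha$-invariant quotient systems is unnecessary, though harmless since the dcc passes to quotients.
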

We note that counterexamples to Question \ref{question metabelian} exist if the finite generation assumption is removed \cite{levit_vigdorovich, eckhardt}.
Moreover, there are infinitely many finitely generated $4$-step solvable groups that are not Hilbert-Schmidt stable \cite{eckhardt}. 
\subsection*{Livshitz theory} 
Let $(X,T)$ be a dynamical system.
A function $f :X \to \R$ is called a \emph{coboundary} if there exists another function $P:X\to \R$ such that $f = P\circ T- P$.
It is clear that every coboundary $f$ sums to $0$ on every periodic orbit, namely, if $T^n x=x$ for some $x\in X$ and $n\in \N$, then
\begin{equation}\label{eq periodic sum}
    \sum_{i=0}^{n-1} f(T^i x) = 0.
\end{equation}
In his two papers \cite{livsic1,livsic2}, Livshitz initiated the study of conditions under which the converse holds.
He showed that for certain dynamical systems and classes of functions, a function $f$ that satisfies Equation \eqref{eq periodic sum} for all periodic points is a coboundary, and moreover, that the regularity of the corresponding function $P$ depends on that of $T$ and $f$. Livshitz's results have been extensively studied and generalized to broader contexts, see \cite{katok_construct, Llave} and references therein.
Due to the connection between this question and the density of periodic measures, as observed by various authors including \cite{veech, coudene}, we have the following result.
\begin{corollary}\label{cor livshitz}
    Let $(X,T)$ be an abelian group dynamical system satisfying the dcc. Then $f\in C(X)$ satisfies Equation \eqref{eq periodic sum} for every periodic point $x\in X$ if and only if $f$ belongs to the closure of $\{P\circ T - P : P\in C(X)\}$ (where the closure is taken in the uniform norm of $C(X)$).
\end{corollary}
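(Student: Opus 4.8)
The plan is to deduce this from Theorem \ref{thm dpm intro} via a standard functional-analytic duality argument. One direction is immediate: if $f = \lim_n (P_n \circ T - P_n)$ uniformly, then for any periodic point $x$ of period $n$, $\sum_{i=0}^{n-1}f(T^i x)$ is the limit of $\sum_{i=0}^{n-1}(P_k(T^{i+1}x) - P_k(T^i x)) = 0$, so the periodic-sum condition holds. The substance is the converse.

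For the converse, let $B$ denote the uniform closure of $\{P\circ T - P : P\in C(X)\}$, a closed subspace of the real Banach space $C(X)$. Suppose $f\in C(X)$ satisfies Equation \eqref{eq periodic sum} on every periodic orbit but $f\notin B$. By Hahn–Banach there is a bounded linear functional $\Lambda$ on $C(X)$ with $\Lambda|_B = 0$ and $\Lambda(f)\neq 0$. By Riesz representation, $\Lambda$ corresponds to a finite signed Borel measure $\mu$ on $X$, and the condition $\Lambda|_B = 0$ says exactly that $\int (P\circ T - P)\,d\mu = 0$ for all $P\in C(X)$, i.e. $\mu$ is $T$-invariant as a signed measure ($T_*\mu = \mu$). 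Writing the Jordan decomposition $\mu = \mu^+ - \mu^-$ and using uniqueness of that decomposition together with $T_*\mu = \mu$, both $\mu^+$ and $\mu^-$ are $T$-invariant (nonnegative) measures; after normalizing, $\mu$ is a real-linear combination of measures in $\M_T(X)$. Hence $\int f\, d\nu \neq 0$ for some $\nu\in\M_T(X)$.

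Now I invoke Theorem \ref{thm dpm intro}: since $(X,T)$ satisfies the dcc, the finitely supported invariant measures are weak-* dense in $\M_T(X)$, so there is a sequence $\nu_k \to \nu$ weak-* with each $\nu_k$ finitely supported and $T$-invariant. Each such $\nu_k$ is a finite convex combination of normalized uniform measures on periodic orbits, so the periodic-sum hypothesis on $f$ gives $\int f\, d\nu_k = 0$ for every $k$. Since $f\in C(X)$, weak-* convergence yields $\int f\,d\nu = \lim_k \int f\,d\nu_k = 0$, contradicting $\int f\,d\nu\neq 0$. Therefore $f\in B$, completing the proof.

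The only genuinely delicate point is the passage from "$\Lambda$ annihilates $B$" to "$f$ integrates to zero against some honest invariant probability measure": one must be slightly careful that $\mu$ is a signed measure rather than a positive one, and verify that $T$-invariance of $\mu$ descends to its positive and negative parts (this is where uniqueness of the Jordan decomposition, applied to $T_*\mu^+ - T_*\mu^- = \mu^+ - \mu^-$ with $T_*\mu^\pm \geq 0$ mutually singular, is used). Everything else is a routine combination of Hahn–Banach, Riesz representation, and the weak-* density supplied by Theorem \ref{thm dpm intro}.
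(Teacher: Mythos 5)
Your argument is correct and takes essentially the same route as the paper: the paper simply cites \cite[Proposition 10.13]{katok_construct} for the fact that the uniform closure of $\{P\circ T-P\}$ equals $\{f\in C(X):\int f\,d\mu=0\text{ for all }\mu\in\M_T(X)\}$, which is exactly the Hahn--Banach/Riesz/Jordan-decomposition argument you write out in full. Both proofs then conclude identically by using Theorem \ref{thm dpm intro} to replace $\M_T(X)$ with the finitely supported invariant measures, whose integrals of $f$ are the normalized periodic-orbit sums.
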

Note that, while Livshitz-type theorems are typically formulated in the setting of manifolds, where the notions of smoothness and regularity of functions are well-defined, there are dynamical analogs of these concepts.
If $(X,T)$ is an abelian group dynamical system, ergodic with respect to Haar measure, then it should be possible to use the partial specification (replacing the regularity assumption on $T$ in manifold settings) to derive a Livshitz-type theorem that does not require taking a closure as in the previous corollary. As with other applications of partial specification (rather than of the dense periodic measures), we omit the details and refer to \cite[Section 11.3]{katok_construct}.
\subsection*{Outline and organization of the paper}
In Section \ref{section pre}, we review key facts and establish the notation used throughout the paper.
In Section \ref{Section automorphisms structure}, we present a variant of the structure theorem for abelian group dynamical systems satisfying the dcc. This theorem shows that such systems can be expressed as a finite chain of extensions of virtually unipotent toral automorphisms, finite groups, Bernoulli shifts and ergodic automorphisms of solenoids. We also briefly review the theory of algebraic actions, which will be used in the proof presented in this section.
Section \ref{section spec} introduces the concept of partial specification and proves its preservation under group extensions.
In Section \ref{section solenoid}, we begin by demonstrating that the solenoids discussed earlier satisfy partial specification. We then combine all prior results to address abelian group dynamical systems ergodic with respect to Haar measure, leading to the proof of Theorem \ref{thm spec intro} and the ergodic part of Theorem \ref{thm dpm intro}.
In Section \ref{section unipotent}, we investigate ergodic measures associated with unipotent toral automorphisms and prove that these automorphisms exhibit a strong form of dense periodic measures. This result is then combined with the partial specification result obtained earlier in Section \ref{section dpm}, culminating in the proof of Theorem \ref{thm dpm intro}.
In Section \ref{section other main proofs} we prove Corollaries \ref{cor hs} and \ref{cor livshitz}. Finally,
Section \ref{section product} presents the proof of Theorem \ref{thm product intro}, which does not rely on the preceding results.

\bigskip

\textbf{Acknowledgements:} The author is grateful to Arie Levit for many valuable ideas and suggestions that contributed to the development of this paper, and to Itamar Vigdorovich for fruitful discussions and comments.

The author was supported by ISF grant 1788/22.

\bigskip

\section{Preliminaries}\label{section pre}
Recall that in this paper, a \emph{dynamical system}, or simply a \emph{system}, refers to a pair $(X,T)$ where $X$ is a compact metrizable space and $T:X\to X$ a homeomorphism. A dynamical system $(X',T')$ is said to be a \emph{continuous factor} of the system $(X,T)$ if there exists a continuous surjection, called a \emph{continuous factor map} $\varphi: X \to X'$ such that $\varphi\circ T=T'\circ \varphi$. If in addition $\varphi$ is a bijection then $(X,T)$ and $(X',T')$ are said to be \emph{conjugate}.

Now let $(X,T)$ be an abelian group dynamical system, i.e., a compact metrizable abelian group together with a continuous automorphism (which then must be a homeomorphism). We use additive notation for $X$ and denote its identity element by $0$.
Let $\widehat{X}$ denote the Pontryagin dual of $X$.
If $X'$ is another compact abelian group and $\varphi :X\to X'$ is a continuous homomorphism, then the \emph{dual homomorphism} $\widehat{\varphi}:\widehat{X'}\to \widehat{X}$ is determined by the relation $\langle \varphi(x),g\rangle = \langle x, \widehat{\varphi}(g)\rangle$ for all $x\in X$ and $g\in \widehat{X'}$. In particular, $\widehat{T}$ is an automorphism of $\widehat{X}$.

Two abelian group dynamical systems $(X,T)$ and $(X',T')$ are said to be \emph{isomorphic} if there exists a continuous group isomorphism $\varphi:X\to X'$ such that $\varphi\circ T=T'\circ \varphi$.

A subgroup $Y$ of $X$ is called \emph{$T$-invariant} if $TY=Y$.
Let $Y$ be a $T$-invariant closed subgroup of $X$. To simplify the notation, we will continue to denote by $T$ both its restriction to $Y$ and the induced automorphism on $X/Y$, provided there is no risk of confusion. 
We will always choose an $X$-invariant metric $d_X$ on $X$ (compatible with the topology); such a metric is known to exist.
We can then define the metric $d_Y$ on $Y$ to be the restriction of the metric on $X$, and
    \begin{equation}\label{eq metric on quotient}
        d_{X/Y} (x_1 +Y, x_2 + Y) = \min_{y\in Y} d_X(x_1,x_2 + y).
    \end{equation}
It is well known that $d_{X/Y}$ is a $T$-invariant metric on $X/Y$.

An abelian group dynamical systems is called \emph{ergodic} if it is ergodic with respect to the Haar measure on the group.

We will primarily focus on abelian group dynamical systems satisfying the descending chain condition.
\begin{definition}\label{def dcc}
    $(X,T)$ satisfies the \emph{descending chain condition} (\emph{dcc}) if for every chain $X\ge X_1 \ge X_2 \ge\cdots$ of non-increasing $T$-invariant closed subgroups of $X$ there exists an integer $K$ such that $X_k = X_K$ for all $k\ge K$.
\end{definition}
Note that if $(X,T)$ satisfies the dcc and $Y$ is a $T$-invariant closed subgroup of $X$, then both $(Y,T)$ and $(X/Y,T)$ also satisfy the dcc. The reader is referred to \cite{schmidt_book} for more details\footnote{We mention that the dcc already ensures that $X$ is metrizable \cite[Proposition 5.1]{jaworski}, so the metrizability assumption can in fact be omitted in this case.}.
\subsection{Solenoids} \label{subsection solenloids notation}
Solenoids have been studied in dynamical systems through various models \cite{berend, rigidity_properties, lawton, lind_skew, schmidt_book, solenoidal}. We now introduce the $S$-adic solenoid, a model for a collection of solenoids that is particularly convenient for our purposes\footnote{Although this construction is sufficient for our purposes, we highlight that solenoids like $\widehat{\Q}$ cannot be realized in this manner. A more general construction can be found in \cite{rigidity_properties}. This restriction is vital, as our main results fail in this broader context \cite[Examples 5.6]{schmidt_book}.}.
Let $S$ be a set that consists of $\infty$ together with a finite (possibly empty) collection of prime numbers.
We denote by $\Z[\frac{1}{S}]$ the ring of rational numbers whose denominators are divisible only by primes in $S\setminus\{\infty\}$.
As usual, for a prime $p$, $\Q_p$ denotes the field of $p$-adic numbers and we set $\Q_\infty = \R$.
For $d\in \N$, $\Q^d$ embeds diagonally in $\Q_S^d\coloneqq\prod_{p\in S} \Q_p^d$, and denote by $\Delta_{S}^d$ the image of $\Z[\frac{1}{S}]^d$, namely\footnote{It should be noted that $\Q_S^d$ and $\Delta_S^d$ are not the $d$-fold products of $\Q_S^1$ and $\Delta_S^1$, respectively, but are isomorphic to them.},
\begin{equation*}
    \Delta_{S}^d = \{(x,\dots,x):x\in \Z[\frac{1}{S}]^d\}\subseteq \Q_S^d.
\end{equation*}
Then $\Delta_{S}^d$ is a cocompact discrete subgroup of $\Q_S^d$, yielding the \emph{$d$-dimensional $S$-adic solenoid} $X_{S}^d = \Q_S^d/\Delta_{S}^d$.
Recall that $\widehat{\Q_S^d}$ can be identified with $\Q_S^d$, and that under this identification,
    \begin{equation}\label{eq solenoid dual}
       \widehat{X_{S}^d} = \{(-x,x,\dots,x):x\in \Z[\frac{1}{S}]^d\}\subseteq\R^d\times\prod_{p\in S\setminus\{\infty\}}\Q_p^d \simeq \widehat{\Q_S^d}
    \end{equation}
(see \cite[Chapter 3, Section 1.6]{representation}). 
Note that for a ring $R\subseteq \Q$, the group $\GL_d(R)$ consists of those matrices $A\in \GL_d(\Q)$ for which both $A$ and $A^{-1}$ have all entries in $R$.
A matrix $A\in \GL_d(\Z[\frac{1}{S}])$
defines an automorphism of $X_{S}^d$ by acting on it diagonally, and its dual automorphism $\widehat{A}:\widehat{\Delta_{S}^d} \to \widehat{\Delta_{S}^d}$ is given by the diagonal action of the transpose $A^\top$.
\section{Automorphisms of compact abelian groups}\label{Section automorphisms structure}
In this section, we present a variation of Miles and Thomas’s structure theorem for automorphisms of compact groups \cite{breakdown} (see also \cite[Theorem 7.1]{jaworski}). It is divided into two parts: Propositions \ref{prop ergodic structure} and \ref{prop nonergodic structure}, addressing the ergodic and non-ergodic cases respectively. We begin with the former.
\begin{proposition}\label{prop ergodic structure}
    Let $(X,T)$ be an ergodic abelian group dynamical system satisfying the dcc. Then $(X,T)$ is a continuous factor of an ergodic abelian group dynamical system $(X',T')$ which admits a finite chain of $T'$-invariant closed subgroups $0=X_n\le \cdots \le X_0 = X'$ such that for every $k = 0 ,\dots,n-1$, $(X_k/X_{k+1}, T')$ is ergodic and is either\footnote{In fact, by \cite[Corollary 6.3]{schmidt_book}, more can be said about the order in which these factors appear, but we will not make use of this.}:
    \begin{enumerate}
        \item isomorphic to a Bernoulli shift $(Y^\Z,\sigma)$ where $Y$ is either a finite abelian group or $Y=\T$ (and $\sigma$ is the left shift, defined by $\sigma(y_n)_{n\in\Z}=(y_{n+1})_{n\in\Z}$);
        \item \label{prop ergodic structure, solenoid} a continuous factor of an $S$-adic solenoid equipped with an automorphism induced by a matrix whose characteristic polynomial is irreducible over $\Q$ and has no roots of unity among its roots.
    \end{enumerate}
\end{proposition}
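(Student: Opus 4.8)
The plan is to dualize and work with the module structure. Write $R=\Z[t^{\pm1}]$ and let $M=\widehat X$, with $t$ acting on $M$ as $\widehat T$. Pontryagin duality turns the dcc for $(X,T)$ into the ascending chain condition for $M$, so $M$ is a Noetherian $R$-module; closed $T$-invariant subgroups of $X$ correspond to $R$-submodules of $M$, and subquotients to subquotient modules. I would first recall two standard inputs from the theory of algebraic $\Z$-actions (see \cite{schmidt_book,kitchens}): the classification of prime ideals of $R$ — namely $(0)$, the ideals $(p)$ for rational primes $p$, the principal ideals $(f)$ with $f\in\Z[t]$ primitive, irreducible and $f\neq t$, and the maximal ideals $(p,g)$ with $g$ irreducible mod $p$ and $g\neq t$ — together with the criterion that $(X,T)$ is ergodic precisely when $t^n-1\notin\mathfrak p$ for every $n\ge1$ and every $\mathfrak p\in\operatorname{Ass}_R(M)$; I will call such primes \emph{admissible}, and they are exactly $(0)$, the $(p)$, and the $(f)$ with no roots of unity among the roots of $f$.

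Next I would identify the "atoms" $(\widehat{R/\mathfrak p},\widehat t)$ for admissible $\mathfrak p$. For $\mathfrak p=(0)$, $R\cong\bigoplus_{\Z}\Z$ with $t$ acting by translation, so $\widehat R\cong(\T^\Z,\sigma)$, a Bernoulli shift over $\T$. For $\mathfrak p=(p)$, $R/\mathfrak p=\mathbb F_p[t^{\pm1}]\cong\bigoplus_{\Z}\mathbb F_p$, so $\widehat{R/\mathfrak p}\cong((\mathbb F_p)^\Z,\sigma)$, a Bernoulli shift over the finite abelian group $\mathbb F_p$. For $\mathfrak p=(f)$ with $f$ irreducible of degree $d$ and with no root-of-unity roots, $R/\mathfrak p\cong\Z[\theta,\theta^{-1}]\subseteq\Q(\theta)=\Q[t]/(f)$, where $\theta$ is an algebraic unit that is not a root of unity; as an abelian group this embeds in $\Z[\tfrac1S]^d$, with $S$ consisting of $\infty$ together with the primes dividing $f(0)$ and the leading coefficient of $f$, and $t$ acts as multiplication by $\theta$, which in a suitable basis is a matrix in $\GL_d(\Z[\tfrac1S])$ with characteristic polynomial $f$; hence $\widehat{R/\mathfrak p}$ is a continuous factor of the $S$-adic solenoid $X_S^d$ with that automorphism, which is case (2). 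More generally, the dual of a submodule of $R$ (resp.\ of $R/(p)$, resp.\ of $R/(f)$) is a continuous factor of $\T^\Z$ (resp.\ of $(\mathbb F_p)^\Z$, resp.\ of $X_S^d$) with the same unique associated prime, so it remains ergodic of the corresponding type once one passes to $\T^\Z$, $(\mathbb F_p)^\Z$ or $X_S^d$ itself as a cover.

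The core of the proof is to build $M$ from these atoms after passing to a cover. Primary decomposition embeds $M$ into a finite direct sum $\bigoplus_i N_i$ with each $N_i$ coprimary at some $\mathfrak p_i\in\operatorname{Ass}_R(M)$ — hence at an admissible prime — so $(X,T)$ is a continuous factor of $\prod_i(\widehat{N_i},\widehat t)$, and it suffices to treat each $N_i$. If $\mathfrak p_i=(p)$, I would filter $N_i$ by the submodules $\ker(p^j\colon N_i\to N_i)$: the layers are finitely generated $\mathbb F_p[t^{\pm1}]$-modules, and the crucial point — where coprimariness enters — is that each layer is torsion-free over $\mathbb F_p[t^{\pm1}]$ (otherwise some element would have annihilator a maximal ideal $(p,g)$, forcing $(p,g)\in\operatorname{Ass}(N_i)$), hence free, so $N_i$ is an iterated extension of copies of $R/(p)$. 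The same computation, with $(p,g)$ replaced by an arbitrary prime strictly containing $(f)$, shows that $N_i$ coprimary at $\mathfrak p_i=(f)$ is an iterated extension of ideals of $R/(f)$, each dualizing to a continuous factor of $X_S^d$. If $\mathfrak p_i=(0)$, then $N_i$ is finitely generated and torsion-free over $R$, so after clearing denominators it embeds in $R^{d_i}$, which we put in its place, $R^{d_i}$ being an iterated extension of $d_i$ copies of $R$. Taking the direct sum $M'$ of the processed pieces, $M$ embeds in $M'$, so $(X,T)$ is a continuous factor of $X':=\widehat{M'}$; the $T'$-invariant chain on $X'$ dual to the filtration of $M'$ has subquotients exactly of types (1) and (2), all ergodic, and $X'$ is itself ergodic, since each atom is ergodic and an extension of ergodic abelian group systems is ergodic by the classical character criterion (an abelian group automorphism is ergodic iff every nontrivial character has infinite orbit, which visibly passes to extensions).

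The step I expect to be the main obstacle is precisely the torsion-freeness of the layers in the coprimary analysis — equivalently, arranging the filtration of a cover of $\widehat X$ so that no non-ergodic subquotient arises, even though the naive prime filtration of $\widehat X$ may well produce one, such as a finite-field piece $\widehat{R/(p,g)}$ or a cyclotomic piece $\widehat{R/(t^n-1)}$. A non-ergodic atom cannot be removed after the fact by enlarging it: e.g.\ $R/(t-1)$ embeds in no $R$-module all of whose associated primes are admissible, since $(t-1)\in\operatorname{Ass}(R/(t-1))$, so the dual piece $(\T,\Id)$ must be avoided ab initio — which is exactly what primary decomposition together with the coprimary computation accomplishes, and this is also why a genuine cover is unavoidable. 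A minor further point is to check that ergodicity is preserved under each cover, which the extension principle above handles uniformly.
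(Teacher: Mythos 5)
Your proposal is correct, and it tracks the paper's strategy closely in its second half while replacing the paper's main citation with a self-contained argument in its first half. The paper's proof consists of (a) invoking \cite[Corollary 6.3 and Theorem 6.5(1)]{schmidt_book} to produce the ergodic cover $(X',T')$ together with a prime filtration whose quotients are the duals of $R_1/\mathfrak{p}_k$ with each such dual ergodic, and then (b) identifying these atoms by a case analysis on $\mathfrak{p}_k$: non-principal primes give finite, hence trivial, quotients; $(0)$ and the constant primes give Bernoulli shifts; and a non-constant irreducible $f$ gives an embedding of $R_1/(f)$ into $\Z[\tfrac1S]^d$ via the companion matrix, dualizing to a factor of the solenoid. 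Your step (b) is essentially identical to the paper's (same embedding, same $S$, same dualization), but where the paper cites Schmidt for step (a), you actually prove it: primary decomposition embeds $M$ into a sum of coprimary pieces at admissible primes, and your observation that the layers $\ker(\mathfrak p^{j+1})/\ker(\mathfrak p^j)$ of a $\mathfrak p$-coprimary module are torsion-free over $R/\mathfrak p$ (because elements outside $\mathfrak p$ act injectively on the whole coprimary module) is exactly the point that prevents non-ergodic subquotients such as $\widehat{R/(p,g)}$ or cyclotomic pieces from appearing --- this is the content of Schmidt's Corollary 6.3, and your diagnosis that a genuine cover is unavoidable is also the reason the paper's statement is phrased in terms of a factor of $(X',T')$ rather than a filtration of $X$ itself. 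Your filtration quotients in the $(f)$-case are duals of ideals of $R/(f)$ rather than of $R/(f)$ itself, which is harmless since the statement only requires continuous factors of the solenoid. Two cosmetic slips: $\theta$ is an $S$-unit (a unit of $\Z[\theta,\theta^{-1}]$), not in general an algebraic unit, since $f$ need not be monic with constant term $\pm1$; and the ergodicity of the product $\prod_i\widehat{N_i}$ should be noted alongside that of the extensions, though it follows from the same character criterion. Neither affects the argument.
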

The proof relies on well known facts from the theory of algebraic $\Z^d$-actions \cite{kitchens, schmidt_book}, which we now recall, restricting attention to the case of a $\Z$-action relevant to our work.
Let $R_1 = \Z[u,u^{-1}]$ denote the ring of Laurent polynomials in the variable $u$ and coefficients in $\Z$.
Given an abelian group dynamical system $(X,T)$, we define an action of $R_1$ on $\widehat{X}$ as follows: for $f = \sum_{n\in\Z}c_nu^n\in R_1$ (in particular, $c_n\ne0$ for only finitely many $n\in\Z$) and $g\in \widehat{X}$,
\[f \cdot g = \sum_{n\in \Z} c_n\widehat{T}^n g.\]
This actions turns $\widehat{X}$ into an $R_1$-module called the \emph{dual module} of $(X,T)$.
On the other hand, let $M$ be a countable $R_1$-module. Thinking of $M$ as a discrete abelian group, the action of $u\in R_1$ defines an automorphism of $M$, and we define the \emph{dual dynamical system of $M$} to be $(\widehat{M},\widehat{u})$.
There is a deep and rich interplay between the algebraic structure of this module and the dynamical properties of the system $(X,T)$.
\begin{proof}[Proof of Proposition \ref{prop ergodic structure}]
    By Corollary 6.3 and Theorem 6.5(1) in \cite{schmidt_book}, $(X,T)$ is a continuous factor of an ergodic abelian group dynamical system $(X',T')$ which admits a finite chain of $T'$-invariant closed subgroups $0=X_n\le \cdots \le X_0 = X'$ with the following property: for each $k$, there exists a prime ideal $\mathfrak{p}_k\subseteq R_1$ such that the system $(X_k/X_{k+1}, T')$ is ergodic and isomorphic to the dual system of the $R_1$-module $R_1/\mathfrak{p}_k$. 
    If $\mathfrak{p}_k$ is a non-principal ideal, then using the fact that $\Q[u,u^{-1}]$ is a principal ideal domain it can be shown that $R_1/\mathfrak{p}_k$ is finite \cite[Examples 6.17(3)]{schmidt_book}, and hence so is its dual $X_k/X_{k+1}$. However, a finite group with an ergodic automorphism must be trivial, in which case the subgroup $X_{k+1}$ can be omitted. 
    Otherwise, $\mathfrak{p}_k$ is generated by some irreducible element $f\in R_1$, and since $u\in R_1$ is a unit, we may assume that $f$ is either $0$ or of the form
    \begin{equation}\label{eq f form}
        f = \sum_{n=0}^{d}c_nu^n\in \Z[u],\qquad c_0\ne 0.
    \end{equation}
    In this case, the dual system is given by
    \[(R_1/\mathfrak{p}_k)\widehat{\phantom{ll}} =\{(x_n)\in \T^\Z: \sum_{n=0}^d c_nx_{m+n} = 0 \text{ for every }m\in\Z\}\]
    together with the left shift (see \cite[Examples 5.2]{schmidt_book}).
    If $f=0$ then we obtain the Bernoulli shift on $\T^\Z$. If $f=c\in \Z$ is a nonzero constant polynomial, then $(x_n)\in (R_1/\mathfrak{p}_k)\widehat{\phantom{ll}}$ if and only if $cx_n=0$ for all $n\in\Z$, and hence $(R_1/\mathfrak{p}_k)\widehat{\phantom{ll}}$ is isomorphic to the Bernoulli shift on $(\Z/c\Z)^\Z$ (once again, the case $c=1$ which gives the trivial group can be omitted).

    Finally, we consider the case where $f$ is non-constant. Let $\alpha$ be any root of $f$ and consider the ideal $\mathfrak{p} = \{h\in R_1: h(\alpha)=0\}\subseteq R_1$.
    Since the constant polynomials embed into $R_1/\mathfrak{p}$, the latter is infinite, and it follows once again that $\mathfrak{p}$ is a principal ideal. Since $f\in \mathfrak{p}$ and is irreducible, we conclude that $\mathfrak{p}=(f)$.

    We define a map $\varphi: R_1/(f)\to \Q^d$ as follows. First, we embed $R_1/(f)$ in the field $\Q(\alpha)$ by sending each element $h+(f)$ to $h(\alpha)\in \Q(\alpha)$ (cf.\ \cite[Section 7]{schmidt_book}). Next, we identify $\Q(\alpha)$ with $\Q^d$ by mapping the basis $1,\alpha,\dots,\alpha^{d-1}$ (over $\Q$) to the standard basis of $\Q^d$. We define $\varphi$ to be the composition of these two maps. Then $\varphi$ is injective. We equip $\varphi(R_1/(f))$ with the $R_1$-module structure induced via $\varphi$, that is, $r \varphi(h) \coloneqq \varphi(rh)$ for every $r\in R_1$ and $h\in R_1/(f)$.
    Explicitly, $u\in R_1$ acts via the companion matrix of $f/c_d$, denoted $A$, which is invertible since $f(0)\ne 0$ by Equation \eqref{eq f form}.  
    Let $S$ be the set of all primes dividing the denominators of the entries of $A$ and $A^{-1}$, together with $\infty$, so that $A\in \GL_d(\Z[\frac{1}{S}])$.
    Observe in addition that the module $\varphi(R_1/(f))$ is generated by the element $\varphi(1+(f))$, which is the first standard basis vector in $\Q^d$.
    It follows that $\varphi(R_1/(f))$ is a submodule of $\Z[\frac{1}{S}]^d$ (where the action of $u\in R_1$ is again given by $A$).
    
    We have embedded $R_1/(f) = R_1/\mathfrak{p}_k$ in $\Z[\frac{1}{S}]^d$, which is identified with $\widehat{X_{S}^d}$ as in \eqref{eq solenoid dual}. The dual of this embedding is a continuous factor map from $(X_{S}^d, A^\top)$ onto $((R_1/\mathfrak{p}_k)\widehat{\phantom{ll}},\widehat{u})$.
    
    We conclude by noting that $A^\top$ has the required properties.
    Equation \eqref{eq f form} and the irreducibility of $f$ in $R_1$ imply together that $f$ is also irreducible in $\Z[u]$, and hence over $\Q$.
    Moreover, the ergodicity of $(R_1/(f))\widehat{\phantom{ll}}$ is equivalent to $f$ having no roots of unity among its roots \cite[Theorem 6.5(1)]{schmidt_book}.
\end{proof}
We could not find the following simple lemma in the literature.
\begin{lemma}\label{lemma ergodic subgroup and quotient}
    Let $(X,T)$ be an abelian group dynamical system.
    Let $Y$ be a $T$-invariant closed subgroup of $X$, and suppose that $(Y,T)$ and $(X/Y,T)$ are ergodic. Then $(X,T)$ is ergodic as well.
\end{lemma}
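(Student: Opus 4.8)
The plan is to work on the dual side, translating ergodicity into a statement about characters. Recall that an abelian group dynamical system $(Z,S)$ is ergodic with respect to Haar measure if and only if the dual automorphism $\widehat{S}$ acts on $\widehat{Z}\setminus\{0\}$ with no finite orbits; equivalently, for every nonzero $g\in\widehat{Z}$ and every $n\in\N$ we have $\widehat{S}^n g\ne g$. So I would first restate the hypothesis: since $(Y,T)$ and $(X/Y,T)$ are ergodic, both $\widehat{T}\colon\widehat{Y}\to\widehat{Y}$ and $\widehat{T}\colon\widehat{X/Y}\to\widehat{X/Y}$ have no nonzero periodic points. The conclusion I want is that $\widehat{T}\colon\widehat{X}\to\widehat{X}$ has no nonzero periodic points.

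The key algebraic input is Pontryagin duality applied to the short exact sequence $0\to Y\to X\to X/Y\to 0$: dualizing gives a short exact sequence of discrete abelian groups
\begin{equation*}
0\longrightarrow \widehat{X/Y}\longrightarrow \widehat{X}\xrightarrow{\ \pi\ }\widehat{Y}\longrightarrow 0,
\end{equation*}
where the inclusion $\widehat{X/Y}\hookrightarrow\widehat{X}$ is the annihilator of $Y$, and this sequence is equivariant for the respective dual automorphisms (all of which I denote $\widehat{T}$). Now suppose $g\in\widehat{X}$ satisfies $\widehat{T}^n g = g$ for some $n\in\N$. Then $\pi(g)\in\widehat{Y}$ is a periodic point of $\widehat{T}$ on $\widehat{Y}$, so by ergodicity of $(Y,T)$ we get $\pi(g)=0$, i.e.\ $g\in\widehat{X/Y}$ (the annihilator of $Y$). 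But then $g$ is a periodic point of $\widehat{T}$ acting on $\widehat{X/Y}$, and ergodicity of $(X/Y,T)$ forces $g=0$. Hence $\widehat{T}$ has no nonzero periodic points on $\widehat{X}$, so $(X,T)$ is ergodic.

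The only genuinely nonroutine point is the ergodicity criterion itself — that an automorphism of a compact abelian group is Haar-ergodic precisely when its dual has no nonzero periodic points; this is classical (it goes back to Halmos, and appears e.g.\ in \cite[Chapter 1]{schmidt_book}), so I would simply cite it. Everything else is formal: the exactness and equivariance of the dualized sequence is standard Pontryagin duality, and the two-step chase above is immediate. I expect no real obstacle, which is presumably why the lemma is stated as "simple"; the remark that it could not be located in the literature just reflects that the combination of "ergodic subgroup plus ergodic quotient implies ergodic" is rarely written down, even though each half of the dictionary is well known.
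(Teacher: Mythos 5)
Your proof is correct and is essentially identical to the paper's: both use the Halmos criterion that ergodicity is equivalent to the dual automorphism having no nontrivial periodic characters, then restrict a periodic character to $Y$ (forcing it to vanish there) and descend it to $X/Y$ (forcing it to be trivial). The dualized short exact sequence you write out is just the formal packaging of the same two-step chase.
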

\begin{proof}
    An abelian group dynamical system $(X,T)$ is ergodic if and only if the only $\widehat{T}$-periodic character is the trivial one \cite{halmos}, \cite[Remarks 1.7(3)]{schmidt_book}.
    In other words, given $\chi\in \widehat{X}$ such that $\widehat{T}^n\chi=\chi$ for some $n\in \N$, we wish to prove that $\chi$ is trivial. The restriction of $\chi$ to $Y$ gives a $\widehat{T}$-periodic character on $Y$, which must be trivial by the ergodicity of $(Y,T)$. Thus, $\chi$ descends to a $\widehat{T}$-periodic character on $X/Y$, which once again must be trivial, and we conclude that $\chi$ is trivial.
\end{proof}
The second main result of this section is as follows.
\begin{proposition}\label{prop nonergodic structure}
    Let $(X,T)$ be an abelian group dynamical system satisfying the dcc. Then there exist $T$-invariant closed subgroups $X_2\le X_1 \le X$ such that
    \begin{enumerate}
        \item $X/X_1$ is finite.
        \item $(X_1/X_2, T)$ is isomorphic to a torus equipped with an automorphism induced by a matrix all of whose eigenvalues are roots of unity.
        \item $(X_2,T)$ is ergodic.
    \end{enumerate}
\end{proposition}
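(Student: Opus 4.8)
The plan is to extract from $X$ the largest subgroup on which $T$ acts ergodically, and to understand the quotient using the structure of the dual module. Concretely, I would first consider the dual module $M=\widehat{X}$ over $R_1=\Z[u,u^{-1}]$, which is a Noetherian module (the dcc on closed $T$-invariant subgroups of $X$ dualizes to the ascending chain condition on $R_1$-submodules of $\widehat X$). Let $N\le M$ be the submodule consisting of all $\widehat T$-periodic elements of $M$, i.e.\ $N=\{g\in M: \widehat T^n g = g \text{ for some } n\in\N\}$; this is an $R_1$-submodule since the periodic elements form a subgroup stable under $\widehat T$. Set $X_2 = \widehat{M/N}$ viewed as a closed $T$-invariant subgroup of $X$ (the annihilator of $N$). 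By the characterization of ergodicity via the absence of nontrivial $\widehat T$-periodic characters (as used in Lemma \ref{lemma ergodic subgroup and quotient}), $(X_2,T)$ is ergodic, since a $\widehat T$-periodic character of $X_2$ corresponds to a $\widehat T$-periodic element of $M/N$, and by Noetherianity $N$ already captures all such elements (a periodic element of $M/N$ lifts, after passing to a suitable power of $\widehat T$ on a finitely generated piece, to a periodic element of $M$). This gives (3).

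Next I would analyze $\widehat{X_2}^{\perp}\cong N$, i.e.\ the system $(X/X_2, T)$, whose dual module is $N$. Every element of $N$ is $\widehat T$-periodic, so $\widehat T$ acts on the (Noetherian, hence finitely generated) module $N$ with the property that some power of $\widehat T$ fixes each generator; taking a common period $m$ for a finite generating set, $\widehat T^m = \Id$ on $N$. Thus $N$ is a finitely generated $\Z$-module (being finitely generated over $R_1$ with $u$ acting with finite order, it is finitely generated over $\Z[u]/(u^m-1)$, a finite $\Z$-algebra). Write $N = N_{\mathrm{tor}}\oplus N_{\mathrm{free}}$ where $N_{\mathrm{tor}}$ is the torsion subgroup (finite) and $N/N_{\mathrm{tor}}\cong\Z^r$ for some $r$; both are $\widehat T$-invariant (torsion is canonical; one gets a $\widehat T$-stable complement at the cost of finite index, which is harmless). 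Dually, $N_{\mathrm{tor}}$ corresponds to a finite quotient $X/X_1$ and $N/N_{\mathrm{tor}}\cong\Z^r$ corresponds to $X_1/X_2\cong\T^r$, on which $T$ acts by the matrix dual to $\widehat T|_{N_{\mathrm{free}}}$. Since $\widehat T^m=\Id$ on $N_{\mathrm{free}}$, every eigenvalue of this matrix is an $m$-th root of unity, giving (2). This also defines $X_1$ as the annihilator of $N_{\mathrm{tor}}$ in $X$, and (1) is immediate.

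The main obstacle I anticipate is the bookkeeping around the torsion/free splitting of $N$ being only $\widehat T$-\emph{semi}-invariant: the torsion subgroup is automatically $\widehat T$-stable, but a complementary free part need not be, so one must either argue that the extension $0\to N_{\mathrm{tor}}\to N\to \Z^r\to 0$ of $\widehat T$-modules still dualizes to a chain with the stated properties (which it does: $X_1$ is simply the annihilator of $N_{\mathrm{tor}}$, $X_1/X_2=\widehat{N/N_{\mathrm{tor}}}=\T^r$, with $T$ acting via the induced automorphism of $\Z^r$), or replace $X$ by a finite-index subgroup at the outset. A secondary subtlety is verifying that $N$ really contains \emph{all} periodicity in $M/N$ — i.e.\ that the "periodic part" construction is idempotent — which follows because if $g+N\in M/N$ satisfies $\widehat T^n(g+N)=g+N$, then $\widehat T^n g - g\in N$ is itself periodic, say killed by $\widehat T^{n'}-\Id$, whence $\widehat T^{nn'}g = g + (\text{periodic element killed already})$, and a short computation shows $g$ lies in a finitely generated $\widehat T$-invariant submodule on which some power of $\widehat T$ acts unipotently over $\Z$ modulo torsion; combined with $g\in N$ being forced, no new periodicity survives. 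I would also remark that alternatively one can invoke the Miles–Thomas structure theorem \cite{breakdown} directly and merely repackage its output, but the self-contained dual-module argument above is cleaner for our purposes.
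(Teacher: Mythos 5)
Your overall strategy (dualize, split off the ``periodic part'' of the module $\widehat X$, and read off the finite and toral pieces from the torsion/free decomposition) is genuinely different from the paper's proof, which instead takes $X_2$ to be the \emph{maximal ergodic $T$-invariant closed subgroup} (quoting Schmidt's Proposition~3.5, which also gives that $X/X_2$ is a compact Lie group), sets $X_1$ to be the preimage of the identity component, and rules out non-root-of-unity eigenvalues on $X_1/X_2$ by a primary-decomposition argument combined with Lemma~\ref{lemma ergodic subgroup and quotient}. However, your argument has a genuine gap at exactly the point you flag as a ``secondary subtlety'': the periodic-part construction is \emph{not} idempotent, and your justification that ``$g\in N$ is forced'' is false. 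Concretely, take $X=\T^2$ with $T$ induced by $\begin{pmatrix}1&1\\0&1\end{pmatrix}$, so $M=\Z^2$ and $\widehat T=\begin{pmatrix}1&0\\1&1\end{pmatrix}$. Then $\widehat T^k(a,b)=(a,ka+b)$, so $N=\{(0,b)\}\cong\Z$, while $M/N\cong\Z$ carries the \emph{trivial} induced action: every element of $M/N$ is $\widehat T$-fixed even though its lifts are not periodic. Your $X_2=N^\perp=\T\times\{0\}$ is then a circle on which $T$ acts as the identity, so condition~(3) fails. The underlying computational error is that from $\widehat T^n g=g+h$ with $h\in N$ of period $n'$ one only gets $\widehat T^{knn'}g=g+k h'$ with $h'=\sum_{j=0}^{n'-1}\widehat T^{nj}h$, and nothing forces $h'$ to be torsion, so $g$ need not be periodic.

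The argument is repairable within your framework, but it requires a further idea: iterate the construction, setting $N_1=N$ and letting $N_{k+1}$ be the preimage in $M$ of the periodic part of $M/N_k$. Noetherianity of $M$ (which, as you note, is the dual form of the dcc) forces this increasing chain to stabilize at some $N_\infty$, and only then does $M/N_\infty$ have no nontrivial $\widehat T$-periodic elements, giving ergodicity of $X_2=N_\infty^\perp$. The price is that $\widehat T$ no longer has finite order on $N_\infty$ (in the example above $N_\infty=\Z^2$ with $\widehat T$ unipotent); one instead argues that each subquotient $N_{k+1}/N_k$ is a finitely generated $\Z$-module on which $\widehat T$ has finite order, so $N_\infty$ is a finitely generated $\Z$-module on which all eigenvalues of $\widehat T$ are roots of unity --- which is exactly the (weaker, quasi-unipotent) conclusion the proposition actually asserts in item~(2). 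Your handling of the torsion/free bookkeeping via the exact sequence $0\to N_{\mathrm{tor}}\to N\to\Z^r\to 0$ is fine once $N$ is replaced by $N_\infty$.
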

\begin{proof}
    By \cite[Proposition 3.5]{schmidt_book} there exists a maximal $T$-invariant closed subgroup $X_2$ such that $(X_2,T)$ is ergodic, and moreover, $Y\coloneqq X/X_2$ is a compact Lie group. Denote by $\pi :X\to Y$ the canonical projection and by $Y^\circ$ the connected component of the identity element of $Y$. Set $X_1 = \pi^{-1} (Y^\circ)$, then it is $T$-invariant and $X/X_1 \simeq Y / Y^\circ$ which is finite since $Y$ is compact.

    Next, since $X_1/X_2 = \pi(X_1) = Y^\circ$ is a connected compact abelian Lie group, we may, without loss of generality, assume that $X_1/X_2=\T^d$ and that $T\in \GL_d(\Z)$ (as an automorphism of $X_1/X_2)$. Suppose, for contradiction, that $T$ has an eigenvalue that is not a root of unity. Then its characteristic polynomial must have a factor that is irreducible over $\Q$ and not cyclotomic.
    By the primary decomposition theorem (see \cite[Theorem 12]{kunze}), there exists a rational $T$-invariant subspace of $\R^d$ on which all eigenvalues of $T$ are not roots of unity.
    Let $Y_1$ denote the projection of this subspace to $\T^d$. Then $Y_1$ is a $T$-invariant closed subgroup of $Y=X/X_2$ on which $T$ acts ergodically. Applying Lemma \ref{lemma ergodic subgroup and quotient} we conclude that $T$ acts ergodically on the $T$-invariant closed subgroup $\pi^{-1}(Y_1)$ of $X$. Since this subgroup strictly contains $X_2$, this contradicts the maximality of $X_2$.
\end{proof}
\section{Partial specification}\label{section spec}
In this section, we introduce a variant of the specification property and show that it is preserved under extensions of abelian group dynamical systems (Proposition \ref{prop extension has partial spec}). For background on the specification property, see \cite{panorama}.
\begin{definition}[cf.\ {\cite[Definitions 1,2]{panorama}}]
\label{def spec and partial tracing}
    Let $(X,T)$ be a dynamical system and let $M\in\N$. Let $a_1 < b_1 < \cdots < a_r < b_r$ be some non-negative integers and $x_1,\dots, x_r\in X$. The collection $\xi = \{(x_i; a_i,b_i)\}_{i=1}^r$ is called an \emph{$M$-spaced specification} if $a_{i+1} - b_i \ge M$ for every $i$.

    Fix a metric $d_X$ on $X$, and let $\varepsilon > 0$.
    We say that the specification $\xi$ is \emph{$\varepsilon$-partially traced by $y\in X$} (\emph{with respect to $T$}) if for every $i=1,\dots,r$, \begin{equation*}
        \frac{\lvert\Lambda_i\rvert}{b_i-a_i} > 1-\varepsilon \qquad\text{where}\qquad \Lambda_i = \{a_i\le n < b_i :d_X(T^n x_i, T^{n}y) < \varepsilon\}.
    \end{equation*}
    We will refer to $\Lambda_i$ as the collection of \emph{tracing indices}.
\end{definition}
We now introduce partial specification. This property is weaker than (periodic) specification and weak specification in that the tracing point is required to approximate only most of the points in each orbit segment, rather than all of them. However, it is stronger than certain variants, such as weak specification \cite{marcus, panorama}\footnote{Weak specification was developed by Marcus \cite{marcus}, although it was not given any name in that paper.} and the approximate product property \cite{approximate_prod}, in that the allowed gaps between orbit segments are uniformly bounded below by a constant independent of the length of the preceding segment.
Additionally, and perhaps more importantly, we require a high degree of flexibility in the choice of the tracing point’s period. This will play a key role in obtaining periodic points in group extensions in Section \ref{section dpm}.
\begin{definition}\label{def partial specification}
    Let $(X,T)$ be a dynamical system and let $\PP\subseteq \N$ be a subset with bounded gaps. $(X,T)$ satisfies \emph{partial specification with periods $\PP$} if for every $\varepsilon>0$ there exist $N,M\in\N$ such that
    for every $M$-spaced specification $\xi=\{(x_i; a_i,b_i)\}_{i=1}^r$ and every $n\in \PP$ with $n \ge \max\{N,(1+\varepsilon)b_r\}$, there exists $y\in X$ of period $n$ that $\varepsilon$-partially traces $\xi$.
        
    Moreover, we say that $(X,T)$ satisfies \emph{partial specification} if for every $c\in \N$, there exists $\PP\subseteq c\N$ such that $(X,T)$ satisfies partial specification with periods $\PP$.
\end{definition}
\begin{remark}\label{remarks partial spec}
    \begin{enumerate}
        \item \label{remark independent of metric} If $d_X$ and $d_X'$ are two metrics on $X$, then for every $\varepsilon>0$ we can choose $0<\delta<\varepsilon$ such that $d_X(x,y)<\delta \Rightarrow d_X'(x,y)<\varepsilon$ for all $x,y\in X$. Using this observation, one easily verifies that whether $(X,T)$ satisfies partial specification (with periods $\PP$) is independent of the choice of metric.
        \item \label{remark partial spec ergodic} Given nonempty open subsets $U,V \subseteq X$, partial specification allows us to construct a point whose orbit intersects both $U$ and $V$. Therefore, $T^nU\cap V\ne\emptyset$ for some $n\in \Z$, implying that $(X,T)$ is topologically transitive \cite[Theorem 5.8]{walters}. In the case that $(X,T)$ is an abelian group dynamical system, this is equivalent to $(X,T)$ being ergodic (see \cite[Theorem 1.1]{schmidt_book}).
    \end{enumerate}
\end{remark}
We will make use of the following simple example later on.
\begin{example}\label{example bernoulli has spec}
    Let $X$ be a compact metrizable space and $T: X^\Z\to X^\Z$ the left shift. For any $\varepsilon>0$, we can take $M=1$, and every $M$-spaced specification can be $\varepsilon$-partially traced by a point of period $n$ for any $n> b_r$. Therefore we can choose the collection of periods to be any subset of $\N$ with bounded gaps, and in particular $(X^\Z,T)$ satisfies partial specification.
    In fact, this flexibility in choosing the parameters reflects the fact that $(X^\Z,T)$ satisfies the stronger notion of periodic specification (see e.g.\ \cite{sigmund}).
\end{example}
The following is the main result of this section.
\begin{proposition}\label{prop extension has partial spec}
    Let $(X,T)$ be an abelian group dynamical system and $Y$ a $T$-invariant closed subgroup of $X$. Suppose that $(Y,T)$ and $(X/Y,T)$ satisfy partial specification with the same collection of periods, then so does $(X,T)$.
\end{proposition}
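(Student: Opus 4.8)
The plan is to take the natural route: push $\xi$ down to the quotient $X/Y$, trace it there by a point of period $n$, lift that point, and then correct the lift by an element of $Y$ of the \emph{same} period $n$ supplied by partial specification of $(Y,T)$ --- this is where the hypothesis that the two factors share the collection $\PP$ of admissible periods is used. Fix then such a $\PP$ and let $\varepsilon>0$; the task is to produce $N,M\in\N$ so that every $M$-spaced specification $\xi=\{(x_i;a_i,b_i)\}_{i=1}^{r}$ in $X$ and every $n\in\PP$ with $n\ge\max\{N,(1+\varepsilon)b_r\}$ admits a period-$n$ point $\varepsilon$-partially tracing it. I would first record one algebraic fact: by Remark~\ref{remarks partial spec}(\ref{remark partial spec ergodic}) the system $(Y,T)$ is ergodic, so $\widehat T$ has no nontrivial periodic character on $\widehat Y$; dually $T^{n}-\Id$ has dense image in $Y$, and since that image is a continuous image of the compact group $Y$ it is closed, hence all of $Y$. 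Thus $T^{n}-\Id\colon Y\to Y$ is onto for every $n$, which is exactly what promotes an arbitrary lift of a period-$n$ point of $X/Y$ to a genuine period-$n$ point of $X$.

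Concretely: apply partial specification of $(X/Y,T)$ at a small accuracy $\varepsilon_{1}$ to the projected specification $\bar\xi=\{(\bar x_i;a_i,b_i)\}$, getting a period-$n$ point $\bar y\in X/Y$ that $\varepsilon_{1}$-partially traces $\bar\xi$, with tracing sets $\bar\Lambda_i\subseteq[a_i,b_i)$; choose any lift $z_{0}$ of $\bar y$, solve $(T^{n}-\Id)u_{0}=-(T^{n}z_{0}-z_{0})$ in $Y$, and put $z:=z_{0}+u_{0}$, a period-$n$ point of $X$ lifting $\bar y$. Writing $p_i:=x_i-z$, the defining property of $\bar\Lambda_i$ together with \eqref{eq metric on quotient} and the $X$-invariance of $d_{X}$ gives, for each $m\in\bar\Lambda_i$, an element $w_{i,m}\in Y$ with $d_{X}(T^{m}p_i,w_{i,m})<\varepsilon_{1}$; that is, the orbit of $p_i$ stays $\varepsilon_{1}$-close to $Y$ at the times in $\bar\Lambda_i$. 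I would then turn this into an honest specification $\eta$ in $Y$: chop each interval $[a_i,b_i)$ into consecutive windows of a fixed length $L$ separated by gaps of length $\bar M_{2}$ (the spacing constant from partial specification of $(Y,T)$ at accuracy $\varepsilon_{2}$), treating a short interval $b_i-a_i<L$ as a single window; in each window let $c$ be the least element of $\bar\Lambda_i$ lying in it (skip the window if there is none) and put into $\eta$ the orbit segment over the remainder of that window, based at $T^{-c}w_{i,c}\in Y$. Since each such segment has length at most $L$, every time $m$ in it satisfies $d_{X}\bigl(T^{m}p_i,\,T^{m}(T^{-c}w_{i,c})\bigr)\le\omega_{L}(\varepsilon_{1})$, where $\omega_{L}$ is a joint modulus of uniform continuity of $T^{0},\dots,T^{L}$ (which exists as $X$ is a compact metric space). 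Applying partial specification of $(Y,T)$ to $\eta$ at accuracy $\varepsilon_{2}$ and with period $n$ --- legitimate since $n\in\PP$ and $n\ge(1+\varepsilon)b_r$ dominates $(1+\varepsilon_{2})$ times the last time appearing in $\eta$ --- yields a period-$n$ point $u\in Y$, and I set $y:=z+u$. Then $T^{n}y=T^{n}z+T^{n}u=z+u=y$, and for every $m$ lying in one of the windows and traced by $u$ the $X$-invariance of $d_{X}$ gives $d_{X}(T^{m}x_i,T^{m}y)=d_{X}(T^{m}p_i,T^{m}u)<\omega_{L}(\varepsilon_{1})+\varepsilon_{2}$.

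It remains to count: inside each $[a_i,b_i)$ the untraced times are contained in the inter-window gaps (a proportion $\le 2\bar M_{2}/L$, and none at all when $b_i-a_i<L$), the complement of $\bar\Lambda_i$ (proportion $<\varepsilon_{1}$), and the set lost to partial tracing in $Y$ (proportion $<\varepsilon_{2}$). Choosing the constants in the order $\varepsilon_{2}:=\varepsilon/8$, then $\bar M_{2}$, then a window length $L$ exceeding, say, $16\bar M_{2}/\varepsilon$, then $\varepsilon_{1}\in(0,\varepsilon/8)$ small enough that $\omega_{L}(\varepsilon_{1})<\varepsilon/2$, then the partial-specification constants $\bar M_{1},\bar N_{1},\bar N_{2}$, and finally $M:=\max\{\bar M_{1},\bar M_{2}\}$ and $N:=\max\{\bar N_{1},\bar N_{2}\}$ (which make $\bar\xi$ and $\eta$ appropriately spaced and make the period $n$ admissible for both factors), one gets $d_{X}(T^{m}x_i,T^{m}y)<\varepsilon$ on a subset of $[a_i,b_i)$ of proportion $>1-\varepsilon$, for every $i$; so $y$ $\varepsilon$-partially traces $\xi$, as required.

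I expect the main obstacle to be exactly the window-chopping and the order of quantifiers it forces. One cannot use a single segment over $[\min\bar\Lambda_i,b_i)$, because the distortion of $T$ along an orbit of unbounded length $b_i-a_i$ is uncontrolled; keeping the windows of a \emph{fixed} length $L$ is what makes the distortion bound $\omega_{L}(\varepsilon_{1})$ uniform in $\xi$ (in particular independent of $b_r$), but $L$ must also be large relative to $\bar M_{2}$ so the inter-window gaps cost only a small fraction, and this is only consistent because the quotient accuracy $\varepsilon_{1}$ is chosen \emph{last}, after $L$ is pinned down. Everything else --- the surjectivity of $T^{n}-\Id$ on $Y$, the manipulations with the translation-invariant metric $d_{X}$, the degenerate short-interval case, and the final proportion estimate --- is routine.
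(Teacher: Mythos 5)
Your proof is correct, and its skeleton is the same as the paper's: project $\xi$ to $X/Y$, trace it there by a period-$n$ point, lift that point to a genuine period-$n$ point of $X$ via the surjectivity of $T^{n}-\Id$ on the ergodic subgroup $Y$ (this is exactly the paper's Lemma \ref{lemma periodic point in coset}, proved the same way by dualizing), and then correct by a period-$n$ element of $Y$ --- with the shared collection $\PP$ used precisely where you use it. The one genuine difference is in how the correction in $Y$ is assembled. The paper first proves that partial specification implies a \emph{partial shadowing} property (Lemma \ref{lemma spec implies shadow}, where the window-chopping lives), shows that the discrepancy elements $y_{a_i},\dots,y_{b_i-1}$ form $\delta$-partial pseudo-orbits, traces each by a (not necessarily periodic) point $z_i\in Y$, and only then applies partial specification in $Y$ a second time to glue the $z_i$ into a single periodic point. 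You instead apply partial specification in $Y$ exactly once, to a specification built directly from fixed-length windows, each anchored at a single time of $\bar\Lambda_i$ and propagated across the window by the uniform continuity of $T^{0},\dots,T^{L}$. This is a legitimate shortcut: because your ``pseudo-orbit'' is the actual orbit of $p_i=x_i-z$, closeness at one anchor time per window already controls the whole window, so the pseudo-orbit/shadowing formalism is not needed. What the paper's two-step route buys is the reusable shadowing lemma and a cleaner separation of concerns; what yours buys is one fewer application of specification and no auxiliary notion. Your closing remark about the order of quantifiers ($\varepsilon_1$ chosen after $L$, which is chosen after $\bar M_2$) correctly identifies the only delicate point, and your per-interval accounting (gaps, complement of $\bar\Lambda_i$, and the $\varepsilon_2$-loss) closes it, provided the final leftover piece of each interval $[a_i,b_i)$ is itself treated as a (possibly short) window rather than discarded --- otherwise a leftover of length up to $L+\bar M_{2}$ could be a non-negligible fraction of a short $[a_i,b_i)$; your framework accommodates this since short windows only improve the continuity estimate.
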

To prove this, we introduce several auxiliary notions. The interplay between variants of specification and shadowing has been extensively studied; see, for example, \cite{various_shadowing,almost_spec_shadowing, panorama}. We now define a variant, which we call \emph{partial shadowing}, and which arises naturally from partial specification.
Additionally, we define the notion of \emph{partial tracing} of a sequence, which resembles the partial tracing introduced earlier for a specification.
\begin{definition}\label{def partial shadowing}
    Let $(X,T)$ be a dynamical system, $d_X$ a metric on $X$ and $(x_n)_{n=1}^N$ a finite sequence in $X$.
    Given $\varepsilon>0$, we say that $(x_n)_{n=1}^N$ is \emph{$\varepsilon$-partially traced by $y\in X$} (\emph{with respect to $T$}) if
    \[\frac{\lvert \Lambda\rvert}{N} > 1- \varepsilon \qquad\text{where}\qquad \Lambda = \{1\le n\le N:d_X(x_n,T^n y)<\varepsilon\}.\]
    Once again, we will refer to $\Lambda$ as the collection of \emph{tracing indices}.
    Let $\delta>0$. $(x_n)_{n=1}^N$ will be called a \emph{$\delta$-partial pseudo-orbit} (\emph{for $T$}) if either $N=1$ or $N\ge 2$ and
    \[\frac{1}{N-1}{\Big\lvert} \{1\le n\le N-1:d_X(Tx_n,x_{n+1})<\delta\}{\Big\rvert} > 1-\delta.\]
    We say that $(X,T)$ satisfies \emph{partial shadowing}
    if for every $\varepsilon>0$ there exists $\delta>0$ such that every $\delta$-partial pseudo-orbit is $\varepsilon$-partially traced by some point in $X$.
\end{definition}
As in Remark \ref{remarks partial spec}(\ref{remark independent of metric}), whether a system satisfies partial shadowing is independent of the specific metric.
\begin{lemma}\label{lemma spec implies shadow}
    Suppose the dynamical system $(X,T)$ satisfies partial specification with periods $\PP\subseteq \N$. Then $(X,T)$ satisfies partial shadowing.
\end{lemma}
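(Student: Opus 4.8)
The plan is to use partial specification to glue together \emph{honest} orbit segments that approximate the given pseudo-orbit on short time windows, absorbing the inevitable errors and gaps into the ``partial'' slack. Fix $\varepsilon>0$ and set $\varepsilon'=\varepsilon/3$. First I would invoke partial specification with periods $\PP$ at scale $\varepsilon'$ to obtain $N_0,M_0\in\N$. The only analytic ingredient is a uniform-continuity estimate: since $X$ is compact and $T$ continuous, for every $L\in\N$ there is $\eta(L)>0$ such that any sequence $z_0,\dots,z_m$ with $m\le L$ and $d_X(Tz_j,z_{j+1})<\eta(L)$ for all $j<m$ satisfies $d_X(T^jz_0,z_j)<\varepsilon'$ for all $j\le m$; this is a finite induction bounding the accumulated error $d_X(T^jz_0,z_j)$ via a modulus of continuity for $T$. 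Now choose $L$ so large that $M_0/L<\varepsilon/6$, set $N_1=\lceil 6(L+2M_0)/\varepsilon\rceil$, and put $\delta=\min\{\eta(L),\,\eta(N_1),\,\varepsilon'/L,\,1/N_1\}$.

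Let $(x_n)_{n=1}^N$ be a $\delta$-partial pseudo-orbit and call $n\in\{1,\dots,N-1\}$ \emph{bad} if $d_X(Tx_n,x_{n+1})\ge\delta$; there are fewer than $\delta N$ of them. If $N\le N_1$ then $\delta(N-1)<1$ forces there to be no bad index, so the sequence is a genuine $\delta$-chain and, since $\delta\le\eta(N_1)$, the point $y=T^{-1}x_1$ gives $d_X(x_n,T^ny)=d_X(x_n,T^{n-1}x_1)<\varepsilon'$ for all $n\le N$, finishing this case. Otherwise $N>N_1$. Partition $\{1,\dots,N\}$ into consecutive segments of length $L+M_0$ (the last one possibly shorter); in each full segment call the first $L$ indices a \emph{block} and the last $M_0$ a \emph{gap}. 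Call a block \emph{clean} if it contains no bad index, and for each clean block $[c,c+L)$ insert the triple $(T^{-c}x_c;\,c,\,c+L)$ into a specification $\xi$. Consecutive blocks are separated by gaps of length $\ge M_0$, so $\xi$ is $M_0$-spaced; picking (using that $\PP$ has bounded gaps) some $n^*\in\PP$ with $n^*\ge\max\{N_0,(1+\varepsilon')b_r\}$ and applying partial specification yields $y\in X$ that $\varepsilon'$-partially traces $\xi$.

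For a clean block $B=[c,c+L)$ the uniform-continuity estimate (this is where $\delta\le\eta(L)$ enters) gives $d_X(x_n,T^n(T^{-c}x_c))<\varepsilon'$ for all $n\in B$, while partial tracing supplies $\Lambda_B\subseteq B$ with $\lvert\Lambda_B\rvert>(1-\varepsilon')L$ on which $d_X(T^n(T^{-c}x_c),T^ny)<\varepsilon'$; hence $d_X(x_n,T^ny)<2\varepsilon'<\varepsilon$ for every $n$ lying in some $\Lambda_B$ with $B$ clean. It remains to bound the complement of the union of these $\Lambda_B$ inside $\{1,\dots,N\}$: it is contained in the gaps (at most $(N/L+1)M_0$ indices), the final short segment (at most $L+M_0$ indices), the non-clean blocks (at most $L$ per bad index, hence fewer than $\delta LN$ indices), and the sets $B\setminus\Lambda_B$ over clean blocks (fewer than $\varepsilon'L$ per block, hence fewer than $\varepsilon'N$ in total). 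By the choices of $L$, $\delta$ and $N_1$, the gaps together with the final segment account for fewer than $\varepsilon N/3$ indices, the non-clean blocks for at most $\delta LN\le\varepsilon N/3$, and the $B\setminus\Lambda_B$ for fewer than $\varepsilon'N=\varepsilon N/3$; so the complement has size $<\varepsilon N$, i.e.\ $(x_n)_{n=1}^N$ is $\varepsilon$-partially traced by $y$.

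The main obstacle is conceptual rather than computational: partial specification only shadows concatenations of genuine orbit segments, so the pseudo-orbit cannot be handed to it directly. Replacing each short clean window by the orbit segment of its initial point, at the cost of only an $\varepsilon'$ error controlled by uniform continuity, is the device that makes this work, and it is exactly this replacement that forces the blocks to have bounded length. Everything after that is bookkeeping: checking that the indices lost to bad spots, to the mandatory $M_0$-gaps, to discarded blocks, to the final partial segment, and to the untraced $\varepsilon'$-fraction inside each block together form less than an $\varepsilon$-fraction, which is arranged by choosing $L$ large, then $\delta$ small, and handling $N\le N_1$ separately.
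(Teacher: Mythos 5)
Your proposal is correct and follows essentially the same route as the paper's proof: control short genuine orbit segments via uniform continuity of the iterates $T,\dots,T^L$, cut the pseudo-orbit into length-$L$ blocks separated by $M_0$-gaps, discard the blocks spoiled by bad indices, feed the rest to partial specification, and count the lost indices. The only cosmetic difference is that you partition $\{1,\dots,N\}$ uniformly and discard non-clean blocks, whereas the paper first splits at the bad indices and then subdivides each resulting interval; the bookkeeping is equivalent, and your handling of the small-$N$ case and of the (implicitly excluded) empty-specification case matches the paper's.
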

\begin{proof}
    Fix a metric $d_X$ on $X$ and let $\varepsilon>0$. The partial specification provides us with $M\in \N$ corresponding to $\varepsilon/4$ as in Definition \ref{def partial specification} (the number $N$ from that definition will not be used).
    Fix some $L\in \N$ that satisfies $\frac{M}{L} < \frac{\varepsilon}{4}$, and then fix a large integer $C>L$ such that $\frac{L+M}{C}<\frac{\varepsilon}{4}$.
    Since $T$ is uniformly continuous, we can choose $\delta>0$ such that for every $x,y\in X$,
    \begin{equation}\label{eq uniformly continuous bound on metric}
        d_X(x,y)<\delta \Longrightarrow d_X(T^n x, T^n y)<\frac{\varepsilon}{2C} \quad \text{for}\quad n=0,\dots,C.
    \end{equation}
    Moreover, we choose $\delta$ small enough so that $\delta<1/C$.
    Observe that Equation \eqref{eq uniformly continuous bound on metric} yields the following estimate, which will be used later on: if $y_0,\dots,y_C \in X$ satisfy $d_X(Ty_{n-1},y_{n}) < \delta$ for $n=1,\dots,C$, then for every such $n$,
    \begin{equation}\label{eq delta psuedo orbit estimate}
        d_X(T^{n}y_0, y_{n}) \le \sum_{i=0}^{n-1} d_X(T^{n-i}y_i, T^{n-i-1}y_{i+1})<\sum_{i=0}^{n-1}\frac{\varepsilon}{2C}\le\frac{\varepsilon}{2}.
    \end{equation}
    Let $N\in \N$ and let $(x_n)_{n=1}^N$ be a $\delta$-partial pseudo-orbit in $X$.
    If $2\le N\le C$, we have
    \[\frac{1}{N-1}{\Big\lvert} \{1\le n\le N-1:d_X(Tx_n,x_{n+1})\ge\delta\}{\Big\rvert} < \delta < \frac{1}{C}\le\frac{1}{N},\]
    and hence the set on the left-hand side must be empty.
    Therefore, by Equation \eqref{eq delta psuedo orbit estimate}, we deduce that $(x_n)$ is $\varepsilon$-partially traced by $x_1$, and hence may assume $N>C$ for the remainder of the proof.
    Let $n_1<\dots<n_K$ be all the integers such that $d_X(Tx_{n_k-1}, x_{n_k})\ge \delta$; then $K< N\delta$. We also set $n_0= 1$ and $n_{K+1} = N+1$.
    we divide each integer interval $[n_k, n_{k+1})$ into subintervals of length $L$, separated by gaps of size $M$ between them (possibly leaving fewer than $L+M$ remaining integers at the end of $[n_k, n_{k+1})$), and construct a specification from these subintervals. 
    More precisely, for every $k=0,\dots, K$ such that $n_{k+1} - n _k \ge L + M$, we define $a_{k,i} = n_k + i (L+M)$, where $i$ takes all values $0,1,2\dots$ such that $a_{k,i} + L + M \le n_{k+1}$.
    Then $\{(T^{-a_{k,i}}x_{a_{k,i}}; a_{k,i}, a_{k,i}+L)\}_{k,i}$ (indexed such that $i$ varies first within each fixed $k$) is an $M$-spaced specification, so there exists $y\in X$ that $\varepsilon/4$-partially traces it\footnote{A priori, it is possible that the specification we defined is empty (if $n_{k+1} - n _k < L + M$ for all $k$), in which case we take an arbitrary $y\in X$.
    However, the estimate below, together with the assumption $N > C$, will indirectly rule out this case.}.
    For any tracing index $n\in [a_{k,i}, a_{k,i}+L)$ (Definition \ref{def spec and partial tracing}), we can use
    the $\varepsilon/4$-partial tracing together with Equation \eqref{eq delta psuedo orbit estimate} to obtain
    \[d_X(T^n y, x_n) \le  d_X(T^n y, T^{n-a_{k,i}}x_{a_{k,i}}) + d_X(T^{n-a_{k,i}}x_{a_{k,i}}, x_n) < \frac{\varepsilon}{4} + \frac{\varepsilon}{2} < \varepsilon.\]
    To estimate the number of integers $1\le n\le N$ for which the last equation holds, we briefly recall the construction: we divided the integers $1,\dots,N$ into at most $N\delta + 1$ intervals. Leaving aside less than $L+M$ points at the end of each interval, it was then subdivided into subintervals of length $L$, separated by gaps of size $M$. In each subinterval, the orbit of $y$ approximates all but at most $L\varepsilon/4$ of the $L$ points. Consequently,
    \begin{align*}
        \frac{1}{N}{\big\lvert} \{1\le n\le N:d_X(x_n,T^n y)\ge\varepsilon\}{\big\rvert}\le \frac{(N\delta+1)(L+M)}{N} + \frac{L\varepsilon/4+M}{L+M} \\
        \le \delta(L+M) + \frac{L+M}{N} + \frac{L\varepsilon/4+M}{L}<\frac{L+M}{C}+\frac{L+M}{C}+\frac{\varepsilon}{4} + \frac{M}{L}<\varepsilon,
    \end{align*}
    where the last two inequalities follow from our choice of $L,C$ and $\delta$ at the start of the proof. We conclude that $(x_n)$ is $\varepsilon$-partially traced by $y$.
\end{proof}
The following lemma describes a case in which a periodic point in a quotient of an abelian group dynamical system lifts to a periodic point in the group.
\begin{lemma}\label{lemma periodic point in coset}
    Let $(X,T)$ be an abelian group dynamical system, and $Y\le X$ a $T$-invariant closed subgroup on which $T$ acts ergodically. Suppose that for some $x\in X$ and $n\in \N$, the coset $x+Y$ is $T^n$-invariant. Then there exists $x'\in x+Y$ with $T^n x' = x'$.
\end{lemma}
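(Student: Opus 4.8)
The plan is to reduce the existence of a $T^n$-fixed point in the coset $x+Y$ to a statement about the surjectivity of the map $\Id - T^n$ on $Y$, and then to deduce that surjectivity from the ergodicity of $(Y,T)$ via Pontryagin duality. First I would pick an arbitrary representative $x_0 \in x+Y$; since $x+Y$ is $T^n$-invariant we have $T^n x_0 \in x_0 + Y$, so there is some $y_0 \in Y$ with $T^n x_0 = x_0 + y_0$. Any other element of the coset has the form $x_0 + y$ with $y\in Y$, and $T^n(x_0+y) = x_0 + y_0 + T^n y$ (using that $Y$ is $T$-invariant, so $T^n y \in Y$). Thus $x' = x_0 + y$ is $T^n$-fixed precisely when $y - T^n y = y_0$, i.e. when $y_0$ lies in the image of the continuous endomorphism $\Id - T^n$ of the compact group $Y$. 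So it suffices to show $\Id - T^n : Y \to Y$ is surjective.

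The image $(\Id - T^n)(Y)$ is a closed subgroup of $Y$ (continuous image of a compact group), so by Pontryagin duality it is all of $Y$ if and only if the dual map $\widehat{\Id - T^n} = \Id - \widehat{T}^n$ on $\widehat{Y}$ is injective. Suppose $\chi \in \widehat{Y}$ satisfies $\chi - \widehat{T}^n\chi = 0$, i.e. $\widehat{T}^n \chi = \chi$. Then $\chi$ is a $\widehat{T}$-periodic character of $Y$, and since $(Y,T)$ is ergodic, the characterization of ergodicity for abelian group dynamical systems (as used in the proof of Lemma~\ref{lemma ergodic subgroup and quotient}, citing \cite{halmos} and \cite[Remarks 1.7(3)]{schmidt_book}) forces $\chi$ to be the trivial character. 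Hence $\Id - \widehat{T}^n$ is injective on $\widehat{Y}$, so $\Id - T^n$ is onto $Y$; taking $y$ with $(\Id - T^n)y = y_0$ yields the desired $x' = x_0 + y \in x+Y$ with $T^n x' = x'$.

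I do not anticipate a serious obstacle here; the only point requiring a little care is the duality step — specifically, that a closed subgroup $H \le Y$ equals $Y$ iff the annihilator $H^\perp \le \widehat{Y}$ is trivial, and that the annihilator of $(\Id - T^n)(Y)$ is exactly $\ker(\Id - \widehat{T}^n)$. The first is standard Pontryagin duality; the second follows directly from the defining relation $\langle (\Id - T^n)y, \chi\rangle = \langle y, (\Id - \widehat{T}^n)\chi\rangle$ for all $y\in Y$. One should also note that $n$ here need not be the minimal period of the coset, but nothing in the argument uses minimality, so this causes no issue.
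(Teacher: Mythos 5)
Your argument is correct and follows essentially the same route as the paper: both reduce the problem to the surjectivity of $y\mapsto y-T^ny$ on $Y$ and obtain it by dualizing, using that ergodicity of $(Y,T)$ forces every $\widehat{T}$-periodic character of $Y$ to be trivial, so that $\Id-\widehat{T}^n$ is injective on $\widehat{Y}$. The only difference is cosmetic: you spell out the duality step (closed image, annihilator equals the kernel of the dual map) which the paper leaves implicit.
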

\begin{proof}
    The core idea of this proof appears, in essence, in \cite[Lemma 10.9]{schmidt_book}.
    As mentioned in the proof of Lemma \ref{lemma ergodic subgroup and quotient}, the ergodicity of $(Y,T)$ implies that for every nontrivial character $\chi\in \widehat{Y}$ we have $\widehat{T}^n \chi \ne \chi$, and hence the homomorphism from $\widehat{Y}$ to $\widehat{Y}$ defined by $\chi\mapsto \widehat{T}^n \chi - \chi$ is injective. Therefore, the dual homomorphism of $Y$, which is defined by $y\mapsto T^n y - y$, is surjective.
    The $T^n$-invariance of $x+Y$ implies that $x-T^n x \in Y$, and hence we can find $y\in Y$ such that $T^n y-y= x-T^n x$, and we set $x' = x+y$.
\end{proof}
We are ready to prove the main result of this section, Proposition \ref{prop extension has partial spec}.
\begin{proof}[Proof of Proposition \ref{prop extension has partial spec}]
    Fix an $X$-invariant metric $d_X$ on $X$.
    Recall that we define $d_Y$ to be the restriction of $d_X$ to $Y$ and $d_{X/Y}$ as in Equation \eqref{eq metric on quotient}.
    Let $\varepsilon>0$. By Lemma \ref{lemma spec implies shadow} we can choose $0<\delta<\varepsilon/2$ such that every $\delta$-partial pseudo-orbit in $Y$ is $\varepsilon/2$-partially traced by some point in $Y$. Choose $0<\eta<\delta/4$ such that for every $x,y\in X$,
    \begin{equation}\label{eq eta delta}
        d_X(x,y)<\eta \Longrightarrow d_X(T x, T y)<\frac{\delta}{2}.
    \end{equation}
    Let $N_Y, M_Y\in \N$ and $N_{X/Y}, M_{X/Y}\in\N$ be the parameters provided by the partial specification for $Y$ and $X/Y$, respectively, corresponding to $\eta$ (Definition \ref{def partial specification}). Let $\PP\subseteq \N$ be the common collection of periods for $Y$ and $X/Y$ and set $N=\max\{N_Y, N_{X/Y}\}$ and $M=\max\{M_Y, M_{X/Y}\}$.
    
    Let $\{(x_i; a_i,b_i)\}_{i=1}^r$ be an $M$-spaced specification in $X$ and $p\in \mathcal{P}$ such that $p \ge \max\{N,(1+\varepsilon)b_r\}$; we aim to show that this specification can be $\varepsilon$-partially traced by a point of period $p$, and we briefly outline the three steps of the argument:
    \begin{enumerate}
        \item We project this specification to $X/Y$ and $\eta$-partially trace it with a point $x+Y$.
        \item To lift the approximation from $X/Y$ to $X$, we introduce elements $y_n\in Y$ to account for the discrepancy between $T^n x_i$ and $T^n x$. We will see that each sequence $y_{a_i},\dots,y_{b_i-1}$ forms a $\delta$-partial pseudo-orbit, which is $\varepsilon/2$-partially traced by some $z_i\in Y$.
        \item We use the partial specification in $Y$ to replace the sequence $(z_i)$ with a single point $z\in Y$, so that $x+z$ $\varepsilon$-partially traces the original specification.
    \end{enumerate}
    We now carry out the details of this argument. By the choice of $N$ and $M$, there exists $x\in X$ such that $x+Y$ $\eta$-partially traces the specification $\{(x_i + Y; a_i,b_i)\}_{i=1}^r$ in $X/Y$, and $T^p(x+Y)=x+Y$.
    
    Applying Lemma \ref{lemma periodic point in coset} and Remark \ref{remarks partial spec}(\ref{remark partial spec ergodic}) to replace $x$ with another point from the same coset if necessary, we may assume that $T^p x=x$.
    The definition of $d_{X/Y}$ implies that for every $i=1,\dots,r$ and $n=a_i,\dots,b_i$ there exists $y_n\in Y$ such that
    \[d_{X/Y} (T^n (x_i+ Y), T^n (x+Y)) = d_X(T^n(x_i-x),y_n).\]
    For each $i$, let $\Lambda_i$ be the corresponding set of tracing indices (Definition \ref{def spec and partial tracing}), i.e., those $n \in [a_i,b_i)$ for which the expression above is less than $\eta$. In the case where $b_i = a_i +1$, the single-point sequence $y_{a_i}$ is trivially a $\delta$-partial pseudo-orbit. Otherwise,
    if $n,n+1\in \Lambda_i$, then using Equation \eqref{eq eta delta} we obtain
    \[d_X(Ty_n, y_{n+1})\le d_X(Ty_n, T^{n+1}(x_i - x)) + d_X(T^{n+1}(x_i - x), y_{n+1})< \delta/2 + \eta <\delta.\]
    Moreover, since $\lvert \Lambda_i\rvert > (b_i-a_i)(1-\eta),$
    \begin{align*}
        {\big \lvert}\{a_i\le n < b_i - 1: d_Y(Ty_n, y_{n+1}) \ge \delta\}{\big \rvert} \le
        {\big \lvert}\{a_i\le n < b_i: n\notin \Lambda_i\text{ \;or \;} n+1\notin\Lambda_i\}{\big \rvert} \\
        \le 2(b_i-a_i-\lvert \Lambda_i\rvert) < 2\eta (b_i - a_i) < \delta (b_i - a_i - 1),
    \end{align*}
    which implies that the sequence $y_{a_i},\dots,y_{b_i-1}$ is a $\delta$-partial pseudo-orbit in $Y$. By the choice of $\delta$, this sequence can be $\varepsilon/2$-partially traced by some $z_i\in Y$, and we denote by $\Lambda_i'$ the corresponding collection of tracing indices (Definition \ref{def partial shadowing}). Now, the specification $\{(T^{-a_i}z_i;a_i,b_i)\}_{i=1}^r$ in $Y$ can be $\eta$-partially traced by some $z\in Y$ with $T^p z = z$, and for every $i$ let $\Lambda_i''$ denote the corresponding collection of tracing indices.
    Then $x+z$ is also of period $p$, and we claim that $x+z$\, $\varepsilon$-partially traces the original specification $\{(x_i; a_i,b_i)\}_{i=1}^r$. To see it, observe that for every $i$ and $n \in \Lambda_i\cap\Lambda_i'\cap\Lambda_i''$ we have
    \begin{align*}
        &d_X(T^n x_i, T^n(x+z)) = d_X (T^n(x_i - x), T^n z) \le \\
        &d_X(T^n(x_i - x), y_n) + d_X(y_n, T^{n-a_i}z_i) +
        d_X(T^{n-a_i}z_i, T^n z) < \eta + \varepsilon/2 + \eta < \varepsilon.
    \end{align*}
    It follows that
    \begin{align*}
        &{\big\lvert}\{a_i\le n< b_i: d_X(T^nx_i,T^n(x+z)) \ge \varepsilon\} {\big\rvert}\le \\
        &{\big\lvert}\{a_i\le n < b_i: n\notin \Lambda_i\cap\Lambda_i'\cap\Lambda_i'' \}{\big\rvert} \le 3(b_i - a_i) - (\lvert \Lambda_i\rvert  + \lvert \Lambda_i'\rvert + \lvert \Lambda_i''\rvert) < \varepsilon(b_i-a_i),
    \end{align*}
    which completes the proof.
\end{proof}
\section{Partial specification for abelian group dynamical systems}\label{section solenoid}
In this section, we show that ergodic abelian group dynamical systems satisfying the dcc also satisfy partial specification and admit dense ergodic periodic measures. The main remaining ingredient, addressed here, is to establish that the solenoids appearing in Proposition \ref{prop ergodic structure} satisfy partial specification. The proof relies on generalizations of ideas from smooth dynamics---particularly dynamics on the torus $\T^d$---to the settings of $p$-adic fields and their products. We begin by introducing the relevant notions.
\subsection{Linear maps over local fields}\label{subsection dynamics local fields}
In what follows, $d\in \N$ and $A\in \GL_d(\Q)$ is a matrix whose characteristic polynomial, denoted $f_A$, is irreducible over $\Q$. 
Let $p$ be a prime number or $\infty$ and let $K_p$ be a splitting field of $f_A$ over $\Q_p$ (recall that $\Q_\infty = \R$). There is a unique absolute value on $K_p$ which extends the $p$-adic absolute value on $\Q_p$, and we denote both of them by $\lvert \; \rvert_p$.
Since $f_A$ is irreducible over $\Q$, it has no multiple roots in $K_p$ and hence $A$ is diagonalizable over $K_p$.
For every eigenvalue $\lambda\in K_p$ of $A$, denote by $V_\lambda$ the corresponding eigenspace inside $K_p^d$. Summing over all eigenvalues $\lambda\in K_p$, we define
\begin{equation}\label{eq stable unstable in Kp}
    \widetilde{E^s_p} =\bigoplus_{\lvert \lambda \rvert_{p}<1} V_\lambda,\qquad
    \widetilde{E^c_p} = \bigoplus_{\lvert \lambda \rvert_{p}=1} V_\lambda, \qquad
    \widetilde{E^u_p} = \bigoplus_{\lvert \lambda \rvert_{p}>1} V_\lambda
\end{equation}
and define the \emph{$p$-adic stable, central and unstable subspaces} (\emph{with respect to $A$}) to be
\begin{equation}\label{eq def stable unstable subspaces}
    E^s_p = \Q_p^d\cap \widetilde{E^s_p}, \qquad
    E^c_p = \Q_p^d\cap \widetilde{E^c_p}, \qquad
    E^u_p = \Q_p^d\cap \widetilde{E^u_p}.
\end{equation}
These subspaces are $A$-invariant and independent of the choice of $K_p$.
Using the fact that $\lvert\sigma(\lambda)\rvert_{p} = \lvert \lambda\rvert_{p}$ for every $\sigma\in \gal(K_p/\Q_p)$, it can be shown that
\begin{equation}\label{eq Q_p^d decomposition}
        \Q_p^d = E_p^s \oplus E_p^c \oplus E_p^u,
\end{equation}
see \cite[Section II.1]{margulis}.

A norm can be defined for $p$-adic vector spaces, much like in the real and complex cases. We give the definition below and refer to \cite{nonarchimedean_func} for more details.
\begin{definition}
    Let $p$ be a prime number and $V$ a vector space over $\Q_p$. A \emph{norm} on $V$ is a function $\lVert\; \rVert: V\to \R_{\ge 0}$ such that for every $v,u\in V$ and $a\in \Q_p$,
    \begin{enumerate}
        \item $\lVert v \rVert=0$ if and only if $v=0$,
        \item $\lVert a v\rVert = \lvert a \rvert_p \lVert v\rVert$,
        \item $\lVert v + u\rVert \le \max\{\lVert v \rVert,\lVert u \rVert\}$.\footnote{In fact, the definition in \cite{nonarchimedean_func} requires slightly weaker assumptions, from which the properties described here still follow.}
    \end{enumerate}
\end{definition}
As in the Euclidean case, for any $d\in \N$ all norms on $\Q_p^d$ are equivalent and induce the standard topology on $\Q_p^d$ (\cite[Proposition 4.13]{nonarchimedean_func}).
It will be useful to adopt a norm on $\Q_p^d$ that is compatible with the decomposition of $\Q_p^d$ described in Equation \eqref{eq Q_p^d decomposition}.
\begin{proposition}[cf.\ {\cite{lind_dynamical}}, {\cite[Proposition 4.3]{padic_lectures}}, {\cite[Lemma 1.1]{margulis}}]\label{prop adapted norm}
    Let $d\in \N$, let $A\in \GL_d(\Q)$ be a matrix whose characteristic polynomial is irreducible over $\Q$ and let $p$ be a prime number or $\infty$. Then there exists a real number $\rho > 1$ and a norm $\lVert\;\rVert$ on $\Q_p^d$, which is said to be \emph{adapted to $A$}, with the following properties:
    \begin{enumerate}
        \item for every $x= x_s + x_c + x_u\in E_p^s \oplus E_p^c\oplus E_p^u = \Q_p^d$, $\lVert x \rVert = \max\{\lVert x_s \rVert,\lVert x_c \rVert,\lVert x_u \rVert\}$,
        \item for every $x\in E_p^s$, $\lVert Ax \rVert \le \rho^{-1}\lVert x \rVert$,
        \item for every $x\in E_p^u$, $\lVert Ax \rVert \ge \rho\lVert x \rVert$,
        \item for every $x\in E_p^c$, $\lVert Ax \rVert = \lVert x \rVert$.
    \end{enumerate}
\end{proposition}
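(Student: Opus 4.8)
The plan is to pass to the splitting field $K_p$, use that $A$ is diagonalizable there (as already noted in the text, since $f_A$ is irreducible over $\Q$ and hence has $d$ distinct roots in $K_p$), and build the norm from coordinates with respect to an eigenbasis adapted to the decomposition into $\widetilde{E^s_p}$, $\widetilde{E^c_p}$, $\widetilde{E^u_p}$; the norm on $\Q_p^d$ is then obtained by restriction. Concretely, I would first choose a $K_p$-basis $v_1,\dots,v_d$ of $K_p^d$ consisting of eigenvectors of $A$, which is possible because each of $\widetilde{E^s_p},\widetilde{E^c_p},\widetilde{E^u_p}$ is by \eqref{eq stable unstable in Kp} a direct sum of eigenspaces, and order it so that an initial block spans $\widetilde{E^s_p}$, a middle block $\widetilde{E^c_p}$, and a final block $\widetilde{E^u_p}$. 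Writing $Av_i=\lambda_i v_i$, I would set $\lVert \sum_i c_i v_i\rVert\coloneqq\max_i\lvert c_i\rvert_p$ on $K_p^d$. For $p$ finite this is an ultrametric norm in the sense of the definition above, and for $p=\infty$ it is the sup-norm on $K^d_\infty$, which is $\R^d$ or $\C^d$; in both cases its restriction to $\Q_p^d$ (which is $\R^d$ when $p=\infty$) is a norm, since $\lvert\cdot\rvert_p$ on $\Q_p$ is literally the restriction of $\lvert\cdot\rvert_p$ on $K_p$.

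For the constant I would take $\rho\coloneqq\min\{r_s^{-1},r_u\}$, where $r_s=\max\{\lvert\lambda\rvert_p:\lvert\lambda\rvert_p<1\}<1$ and $r_u=\min\{\lvert\lambda\rvert_p:\lvert\lambda\rvert_p>1\}>1$, with the obvious conventions (and $\rho=2$) when the stable or unstable part is absent; then $\rho>1$. With this choice the four properties reduce to one-line coordinate computations. By \eqref{eq Q_p^d decomposition}, the splitting $x=x_s+x_c+x_u$ of a vector $x\in\Q_p^d$ coincides with its splitting inside $K^d_p=\widetilde{E^s_p}\oplus\widetilde{E^c_p}\oplus\widetilde{E^u_p}$, so the three blocks of eigen-coordinates of $x$ are exactly the coordinates of $x_s$, $x_c$ and $x_u$; hence $\lVert x\rVert=\max_i\lvert c_i\rvert_p=\max\{\lVert x_s\rVert,\lVert x_c\rVert,\lVert x_u\rVert\}$, which is property (1). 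Applying $A$ multiplies the $i$-th coordinate by $\lambda_i$, so on the stable block $\lVert Ax\rVert=\max_i\lvert\lambda_i\rvert_p\lvert c_i\rvert_p\le r_s\lVert x\rVert\le\rho^{-1}\lVert x\rVert$, on the unstable block $\lVert Ax\rVert\ge r_u\lVert x\rVert\ge\rho\lVert x\rVert$, and on the central block every $\lvert\lambda_i\rvert_p=1$ so $\lVert Ax\rVert=\lVert x\rVert$; these are properties (2), (3), (4). Each of these is a statement about vectors of $\Q_p^d$, but the computation takes place in the $K_p$-coordinates and is unaffected by the restriction.

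I do not expect a serious obstacle; the only point that needs care is the bookkeeping around the extension $K_p/\Q_p$ (equivalently $\C/\R$ in the Archimedean case) --- in particular that the $\Q_p$-rational stable, central and unstable subspaces are the traces on $\Q_p^d$ of their counterparts in $K_p^d$ --- and this is exactly what \eqref{eq Q_p^d decomposition} supplies. If one prefers to avoid the splitting field in the final write-up, an alternative is to define the norm intrinsically on each summand of \eqref{eq Q_p^d decomposition}, say $\lVert x\rVert_s=\sup_{n\ge0}\rho^n\lVert A^n x\rVert_0$ on $E^s_p$, $\lVert x\rVert_u=\sup_{n\ge0}\rho^n\lVert A^{-n}x\rVert_0$ on $E^u_p$, and $\lVert x\rVert_c=\sup_{n\in\Z}\lVert A^n x\rVert_0$ on $E^c_p$, for any fixed auxiliary norm $\lVert\cdot\rVert_0$; these suprema are finite by the eigenvalue estimates, and each is again an ultrametric norm precisely because a supremum (not a sum, which would destroy the strong triangle inequality) of ultrametric norms is ultrametric, after which $\lVert x\rVert\coloneqq\max\{\lVert x_s\rVert_s,\lVert x_c\rVert_c,\lVert x_u\rVert_u\}$ is adapted to $A$ by the same reindexing arguments.
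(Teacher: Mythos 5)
Your proof is correct and follows the same route as the paper: pass to the splitting field $K_p$, take an eigenbasis, and define the norm as the maximum of the $\lvert\;\rvert_p$-absolute values of the eigencoordinates, restricted to $\Q_p^d$. The paper leaves the verification of the four properties as "straightforward"; your explicit choice of $\rho$ and the coordinate computations are exactly the details being omitted there.
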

\begin{proof}
    As above, let $K_p$ be a splitting field for the characteristic polynomial of $A$ over $\Q_p$, and let $v_1,\dots, v_d$ be an eigenbasis of $K_p^d$ for $A$. For every $v=\sum \alpha_i v_i \in \Q_p^d$ define $\lVert v\rVert = \max_i\{\lvert\alpha_i\rvert_{p}\}$. It is straightforward to verify that this defines a norm on $\Q_p^d$ with the desired properties.
\end{proof}
\subsection{Partial specification for automorphisms of solenoids}\label{subsection spec for solenoids}
Throughout this subsection, let $d\in \N$ and let $S$ be a set that consists of $\infty$ together with a finite (possibly empty) collection of prime numbers.
As before, $A$ is a rational matrix whose characteristic polynomial is irreducible over $\Q$, but we now require that $A\in \GL_d(\Z[\frac{1}{S}])$ (where $\Z[\frac{1}{S}]$ is the ring of rational numbers whose denominators are divisible only by primes in $S\setminus\{\infty\}$). We also require that none of the eigenvalues of $A$ is a root of unity.
Note that our assumptions on $A$ are consistent with the automorphism of the solenoid described in Proposition \ref{prop ergodic structure}.

Let us recall the notation from Subsection \ref{subsection solenloids notation}. We write $\Q_S^d = \prod_{p\in S} \Q_p^d$, and denote by $\Delta_S^d$ the diagonal embedding of $\Z[\frac{1}{S}]^d$ into $\Q_S^d$. The $d$-dimensional $S$-adic solenoid is then defined as $X_S^d = \Q_S^d / \Delta_S^d$.
The matrix $A$ acts diagonally on $\Q_S^d$ and on $X_S^d$, and recall that we identify the dual group $\widehat{\Q_S^d}$ with $\Q_S^d$ and the dual automorphism $\widehat{A}$ with (the diagonal action of) $A^\top$.

For every $p\in S$ let $E_p^s,E_p^c,E_p^u\subseteq \Q_p^d$ be the $p$-adic stable, central and unstable subspaces with respect to $A$ as in Equation \eqref{eq def stable unstable subspaces}, and define the following subsets of $\Q_S^d$:
\[E^s \coloneqq \prod_{p\in S}E_p^s, \qquad
E^c \coloneqq \prod_{p\in S}E_p^c, \qquad
E^u \coloneqq \prod_{p\in S}E_p^u.\]
\begin{lemma}[cf.\ {\cite[Chapter IV, Theorem 8]{weil}}]\label{lemma E^u ne 0}
    Under the above assumptions, $E^u \ne 0$.
\end{lemma}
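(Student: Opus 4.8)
The plan is to argue by contradiction: assume $E^u=0$ and derive that some eigenvalue of $A$ is a root of unity, contradicting our standing hypothesis on $A$.

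First I would translate the hypothesis $E^u=0$ into a statement about the absolute values of the eigenvalues of $A$. Since $E^u=\prod_{p\in S}E_p^u$, we have $E^u=0$ exactly when $E_p^u=0$ for every $p\in S$. For a fixed $p$, set $g_p=\prod_{\lvert\lambda\rvert_p>1}(x-\lambda)$; the Galois invariance $\lvert\sigma\lambda\rvert_p=\lvert\lambda\rvert_p$ shows $g_p\in\Q_p[x]$, and $g_p$ divides $f_A$. Since $f_A$ is irreducible over $\Q$ it has distinct roots, so $E_p^u$ is the kernel of $g_p(A)$ acting on $\Q_p^d$, which has dimension $\deg g_p$. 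Hence $E_p^u=0$ if and only if $g_p$ is constant, i.e. if and only if no eigenvalue $\lambda$ of $A$ satisfies $\lvert\lambda\rvert_p>1$. Altogether, $E^u=0$ is equivalent to: $\lvert\lambda\rvert_p\le 1$ for every $p\in S$ and every eigenvalue $\lambda$ of $A$. (Equivalently, one reads the same dimension count off the decomposition \eqref{eq Q_p^d decomposition}.)

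Next I would observe that the analogous bound at the remaining places is automatic, which is the real point. As $A\in\GL_d(\Z[\frac{1}{S}])$, its characteristic polynomial $f_A$ is monic with coefficients in $\Z[\frac{1}{S}]$; for a prime $p\notin S$ this gives $f_A\in\Z_p[x]$ monic, so by the ultrametric inequality every root of $f_A$ in an algebraic closure of $\Q_p$ has $p$-adic absolute value $\le 1$. Combined with the previous paragraph, the assumption $E^u=0$ forces every eigenvalue $\lambda$ of $A$ to satisfy $\lvert\lambda\rvert_p\le 1$ at \emph{every} place $p$ of $\Q$. Now fix a root $\alpha$ of $f_A$, so that the eigenvalues of $A$ are precisely the $\Q$-conjugates of $\alpha$. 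The bounds at the finite places say precisely that $\alpha$ is an algebraic integer; the bound at the infinite place says all conjugates of $\alpha$ lie in the closed unit disk; and $\alpha\ne 0$ because $A$ is invertible. By Kronecker's theorem a nonzero algebraic integer all of whose conjugates lie in the closed unit disk is a root of unity (its powers are algebraic integers of bounded degree with uniformly bounded conjugates, hence form a finite set, so $\alpha^i=\alpha^j$ for some $i<j$ and $\alpha^{j-i}=1$). Thus $\alpha$ is a root of unity, contradicting the hypothesis, and therefore $E^u\ne 0$.

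I do not expect a serious obstacle: once one sees that $E^u=0$ is merely a bound on the eigenvalues at the places of $S$, the places outside $S$ come for free from the integrality of $f_A$, and Kronecker's theorem (or, equivalently, the product formula applied to the $S$-unit $\alpha$, in the spirit of \cite{weil}) closes the argument. The only mild care needed is the bookkeeping with the non-archimedean absolute values — in particular noting that $f_A$ being monic over $\Z_p$ for $p\notin S$ already pins down $\lvert\lambda\rvert_p\le 1$ there.
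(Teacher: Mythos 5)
Your proof is correct, and it takes a genuinely different route from the paper's. The paper argues directly: writing $\ell$ for the least common multiple of the denominators of the coefficients of the characteristic polynomial, it treats the case $\ell=1$ by Kronecker's theorem at the archimedean place, and for $\ell>1$ it invokes the identity $\prod_{p\in S\setminus\{\infty\}}\prod_{i}\lvert\lambda_{p,i}\rvert_p=\ell$ from the $p$-adic entropy literature to produce a finite $p\in S$ with an eigenvalue of absolute value greater than one; in either case $\widetilde{E^u_p}\ne 0$ for some $p$, whence $E_p^u\ne 0$. You instead argue by contradiction: assuming $E^u=0$, your Galois-invariance/kernel-of-$g_p(A)$ argument (equivalently, the dimension count coming from \eqref{eq Q_p^d decomposition}) shows that no eigenvalue exceeds $1$ in absolute value at any place in $S$; the monicity of the characteristic polynomial over $\Z[\frac{1}{S}]\subseteq\Z_p$ gives the same bound at every prime $p\notin S$ for free; hence the characteristic polynomial has integer coefficients and every conjugate of a (nonzero) root lies in the closed unit disk, so Kronecker forces a root of unity, contradicting the standing hypothesis. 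In effect you replace the paper's quantitative product formula by the qualitative observation that absence of expansion at the finite places of $S$ already forces $\ell=1$, which makes your argument more elementary and self-contained (no external citation, no case split), at the cost of the sharper quantitative statement that the paper quotes but does not actually need. Both arguments ultimately rest on Kronecker's theorem and on the passage from $\widetilde{E^u_p}\ne0$ to $E^u_p\ne0$, which you justify correctly.
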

\begin{proof}
    As before, for every $p\in S$ we let $K_p$ be a splitting field for the characteristic polynomial of $A$ over $\Q_p$, and let $\lambda_{p,1},\dots,\lambda_{p,r_p}$ be the eigenvalues of $A$ inside $K_p$ with $\lvert\lambda_{p,i}\rvert_p > 1$. We claim that such eigenvalues exist (i.e., $r_p>0$ for some $p$): let $\ell\in \N$ be the least common multiple of the denominators of the coefficients of the characteristic polynomial of $A$.
    If $\ell = 1$ then the polynomial has integer coefficients, and since we assume no eigenvalue of $A$ is a root of unity, a well known theorem of Kronecker implies that $\lvert \lambda\rvert_\infty > 1$ for some eigenvalue\footnote{We mention that, in contrast to the case of an integer matrix, a rational matrix may satisfy $E_\infty^s=E_\infty^u=0$; see for example \cite{lind_recurrent}.} $\lambda \in \C$. If $\ell > 1$, then the following stronger assertion is proven in \cite[pages 412, 416]{p_adic_entropy}:
    \[\prod_{p\in S\setminus\{\infty\}}\prod_{i=1}^{r_p} \lvert \lambda_{p,i}\rvert_p = \ell,\]
    so in particular $r_p > 0$ for some $p$. For this $p$, we have $\widetilde{E_p^u}\ne 0$ (Equation \eqref{eq stable unstable in Kp}) and hence also $E_p^u\ne 0$ and $E^u\ne 0$.
\end{proof}
For the rest of this section, let $\lVert\;\rVert_p$ be an adapted norm on $\Q_p^d$ as in Proposition \ref{prop adapted norm}, and define a metric on $\Q_S^d$ by
\begin{equation}\label{eq metric on Q_S^d}
    d_{\Q_S^d}(x,y)=\max_{p\in S} \{\lVert x_p - y_p\rVert_p\},\qquad x=(x_p)_{p\in S},y=(y_p)_{p\in S}\in \Q_S^d.
\end{equation}
For $\varepsilon>0$ and $n\in\N_0$, we denote
\begin{equation}\label{eq ball def}
    B(\varepsilon)= \{x\in\Q_S^d:d_{\Q_S^d}(x,0) <\varepsilon\},\qquad B(\varepsilon,n) = \bigcap_{j=0}^n A^{-j} {\big (} B(\varepsilon){\big )}, \qquad B^s(\varepsilon) = B(\varepsilon)\cap E^s,
\end{equation}
and $B^c(\varepsilon)$ and $B^u(\varepsilon)$ are defined analogously to $B^s(\varepsilon)$.
We list a few useful properties that follow directly from Proposition \ref{prop adapted norm} and the definition of $d_{\Q_S^d}$:
Choose a real number $\rho>1$ small enough to satisfy the conclusion of Proposition \ref{prop adapted norm} for all $p\in S$. Then for every $n\in \N_0$ and $\varepsilon>0$,
\begin{align}
    &A^n {\big (}B^s(\rho^n\varepsilon){\big )}\subseteq B^s(\varepsilon), \qquad A^{\pm n}{\big (}B^c(\varepsilon){\big )} =  B^c(\varepsilon), \qquad A^{-n} {\big (}B^u(\rho^n\varepsilon){\big )}\subseteq B^u(\varepsilon),\label{eq ball image under A} \\
    &B(\varepsilon,n) = B^s(\varepsilon) + B^c(\varepsilon) + A^{-n}(B^u(\varepsilon)). \label{eq n ball decomposition}
\end{align}
The following lemma shows that the projection of $E^u$ to the solenoid $X_S^d$ is dense.
\begin{lemma}
    $E^u + \Delta_S^d$ is dense in $\Q_{S}^d$.
\end{lemma}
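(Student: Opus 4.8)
The plan is to use Pontryagin duality to reduce the density statement to a statement about characters annihilating a subgroup. Suppose for contradiction that the closure $H = \cl(E^u + \Delta_S^d)$ is a proper closed subgroup of $\Q_S^d$. Since $H$ is a closed subgroup and $\Delta_S^d \le H$, the quotient $H/\Delta_S^d$ is a proper closed subgroup of the solenoid $X_S^d = \Q_S^d/\Delta_S^d$, and its annihilator $(H/\Delta_S^d)^\perp$ inside $\widehat{X_S^d}$ is a nontrivial subgroup. Concretely, there is a nonzero character $\chi$ of $\Q_S^d$, which we identify with a nonzero vector in $\widehat{\Q_S^d} \simeq \Q_S^d$ via \eqref{eq solenoid dual}, such that $\chi$ annihilates both $\Delta_S^d$ (so $\chi$ descends to $\widehat{X_S^d}$, i.e.\ $\chi$ lies in $\widehat{X_S^d}$ identified as a subset of $\R^d \times \prod_{p \in S\setminus\{\infty\}} \Q_p^d$) and $E^u$.

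The next step is to propagate this annihilation under $A$. Because $E^u$ is $A$-invariant and the action of $A$ on $\Q_S^d$ is dual (up to the identification) to the action of $A^\top$, the character $\widehat{A}^n \chi = (A^\top)^n \chi$ also annihilates $E^u$ for every $n \in \Z$; moreover $(A^\top)^n \chi$ still lies in $\widehat{X_S^d}$ since $A \in \GL_d(\Z[\tfrac1S])$ preserves $\Delta_S^d$. Thus the $A^\top$-orbit of $\chi$ lies in the annihilator $N := (E^u)^\perp \cap \widehat{X_S^d}$, a nontrivial subgroup. Now I would run the dimension/growth argument: $N$ is a subgroup of $\widehat{X_S^d}$, which (as a subset of $\R^d \times \prod_{p \in S\setminus\{\infty\}}\Q_p^d$ via the diagonal embedding of $\Z[\tfrac1S]^d$) is a discrete cocompact lattice of "rank $d$" in the appropriate sense. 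The annihilator of $E^u$ in $\Q_S^d = \prod_{p\in S}\Q_p^d$ is $\prod_{p\in S}(E_p^u)^\perp$, and on the dual side $A^\top$ acts with the transposed eigenvalue structure: the annihilator of $E_p^u$ corresponds to the sum of generalized eigenspaces of $A^\top$ over $\Q_p$ for eigenvalues of absolute value $\le 1$, i.e.\ to $E_p^s \oplus E_p^c$ of $A^\top$. Since by Lemma \ref{lemma E^u ne 0} we have $E^u \ne 0$, this annihilator is a proper subspace at (at least) one place $p$, so $N$, being contained in a proper $A^\top$-invariant "sub-solenoid-lattice", is a subgroup of $\widehat{X_S^d}$ of strictly smaller rank — but $N$ contains the full $A^\top$-orbit of the nonzero element $\chi$. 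The conclusion I want is that the $\Q$-span (inside $\Q^d$, thinking of $\chi$ as a rational vector since $\widehat{X_S^d}$ sits diagonally over $\Z[\tfrac1S]^d$) of $\{(A^\top)^n \chi : n \in \Z\}$ is an $A^\top$-invariant nonzero $\Q$-subspace of $\Q^d$; by irreducibility of the characteristic polynomial $f_A = f_{A^\top}$ over $\Q$, this span is all of $\Q^d$. Hence $N$ spans $\Q^d$ over $\Q$, contradicting that $N$ is contained in the annihilator of the nonzero space $E^u$ — concretely, every element of $N$, viewed $p$-adically at the place $p$ where $E_p^u \ne 0$, must lie in the proper subspace $(E_p^u)^\perp = E_p^s \oplus E_p^c$ (of $A^\top$), which cannot contain a $\Q$-spanning set of $\Q^d$. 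This contradiction shows $H = \Q_S^d$.

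The step I expect to be the main obstacle is making the "rank" argument precise: translating the geometric fact $E^u \ne 0$ into a genuine obstruction on characters. The cleanest route is the one sketched above — pass to the $\Q$-span of the $A^\top$-orbit of the annihilating character and invoke irreducibility of $f_A$ over $\Q$ to force it to be all of $\Q^d$, then derive a contradiction at a place where the unstable space is nontrivial. One must be careful that the diagonal identification \eqref{eq solenoid dual} lets us regard characters of $X_S^d$ as elements of $\Z[\tfrac1S]^d \subseteq \Q^d$ simultaneously at all places, so that "annihilates $E_p^u$ at place $p$" is a linear-algebraic constraint over $\Q_p$ on a fixed rational vector; combined with $A^\top$-invariance and $\Q$-irreducibility this is what yields the contradiction. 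A secondary technical point is verifying that a proper closed subgroup of $\Q_S^d$ containing the lattice $\Delta_S^d$ indeed has a nontrivial annihilator that we may take to lie in $\widehat{X_S^d}$ and to be $A^\top$-stable after passing to the orbit — this is standard Pontryagin duality but should be stated carefully.
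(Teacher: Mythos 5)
Your proposal is correct and follows essentially the same route as the paper: Pontryagin duality reduces the claim to showing the annihilator of $E^u+\Delta_S^d$ is trivial, Lemma \ref{lemma E^u ne 0} supplies a place $p$ with $E_p^u\ne 0$, and irreducibility of the characteristic polynomial over $\Q$ finishes the job. The only (cosmetic) difference is that you run the irreducibility step on the $\Q$-span of the $A^\top$-orbit of a single annihilating character, whereas the paper applies it directly to the $A^\top$-invariant subspace $(E_p^u)^{\bot p}\cap\Q^d$, concluding it is $0$.
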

\begin{proof}
    For a subgroup $H$ of $\Q_S^d$, denote by $H^\bot$ its annihilator in $\widehat{\Q_S^d}$.
    By duality theory, it suffices to show that $(E^u + \Delta_S^d)^\bot=0$.
    Since $(\Delta_S^d)^\bot = \widehat{X_S^d}$, we have, as stated in Equation \eqref{eq solenoid dual},
    \begin{equation}\label{eq annihilator of diagonal}
        (\Delta_S^d)^\bot = \{(-x,x,\dots,x):x\in \Z[\frac{1}{S}]^d\}\subseteq\R^d\times\prod_{p\in S\setminus\{\infty\}}\Q_p^d \simeq \widehat{\Q_S^d}.
    \end{equation}
    Moreover, writing $H^{\bot p}$ for the annihilator of a subgroup $H\le\Q_p^d$ in $\widehat{\Q_p^d}$ , we have
    \begin{equation}\label{eq annihilator of E^u}
        (E^u)^\bot = \prod_{p\in S} (E_p^u)^{\bot p}.
    \end{equation}
    Apply Lemma \ref{lemma E^u ne 0} to choose some $p\in S$ with $E_p^u\ne0$.
    Then $(E_p^u)^{\bot p}\cap \Q^d$ is an $A^\top$-invariant subspace of $\Q^d$, and since the characteristic polynomial of $A$ is irreducible over $\Q$, it must be either $0$ or $\Q^d$. The latter case is impossible since $E_p^u\ne0$, and therefore $(E_p^u)^{\bot p}\cap \Q^d = 0$. Combining this with Equations \eqref{eq annihilator of diagonal} and \eqref{eq annihilator of E^u}, we conclude that
    \begin{equation*}
        (E^u + \Delta_S^d)^\bot = (E^u)^\bot\cap (\Delta_S^d)^\bot = 0,
    \end{equation*}
and the proof is complete.
\end{proof}
\begin{corollary}\label{cor Q_s^d decomposition}
    For every $\varepsilon>0$ there exists $N\in\N$ such that for every $n\ge N$,
    \begin{equation*}
        \Q_S^d= A^n {\big(}B^u(\varepsilon){\big)} + B^s(\varepsilon) + B^c(\varepsilon) + \Delta_S^d.
    \end{equation*}
\end{corollary}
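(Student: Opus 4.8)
The plan is to deduce the corollary from the preceding lemma (density of $E^u + \Delta_S^d$ in $\Q_S^d$) together with the compactness of the solenoid $X_S^d = \Q_S^d/\Delta_S^d$ and the expanding behaviour of $A$ on $E^u$ recorded in \eqref{eq ball image under A}. First I would fix $\varepsilon > 0$. The set $B^u(\varepsilon) + B^s(\varepsilon) + B^c(\varepsilon)$ is an open neighbourhood of $0$ in $\Q_S^d = E^s \oplus E^c \oplus E^u$ (using the adapted norm and \eqref{eq metric on Q_S^d}), and it contains the fundamental open set $B(\varepsilon,0) \cap (\text{all three summands})$; in particular its image in $X_S^d$ is an open neighbourhood $U$ of the identity coset. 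Since $X_S^d$ is compact, finitely many translates of $U$ cover it, so $V \coloneqq B^u(\varepsilon) + B^s(\varepsilon) + B^c(\varepsilon) + \Delta_S^d$ is an open subset of $\Q_S^d$ whose complement, modulo $\Delta_S^d$, is covered by finitely many translates of $U$ — but that is not quite enough on its own, so instead I would argue directly with the density lemma.

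Here is the cleaner route. By the lemma, $E^u + \Delta_S^d$ is dense in $\Q_S^d$; equivalently, the image of $E^u$ in the compact group $X_S^d$ is dense. Consider the open cover of $X_S^d$ by the translates $\{v + U : v \in E^u\}$ where $U$ is the image of $W \coloneqq B^u(\varepsilon) + B^s(\varepsilon) + B^c(\varepsilon)$ in $X_S^d$; density of $E^u$'s image guarantees this is indeed a cover, and compactness yields finitely many $v_1,\dots,v_k \in E^u$ with $X_S^d = \bigcup_{j=1}^k (v_j + U)$. Pulling back to $\Q_S^d$,
\begin{equation*}
    \Q_S^d = \{v_1,\dots,v_k\} + W + \Delta_S^d = \{v_1,\dots,v_k\} + B^u(\varepsilon) + B^s(\varepsilon) + B^c(\varepsilon) + \Delta_S^d.
\end{equation*}
Now I use that each $v_j \in E^u$ and that $A$ expands on $E^u$: by \eqref{eq ball image under A}, $A^{-n}(B^u(\rho^n \varepsilon)) \subseteq B^u(\varepsilon)$, so $A^n(B^u(\varepsilon)) \supseteq B^u(\rho^n \varepsilon)$, and these balls exhaust $E^u$ as $n \to \infty$ since $\rho > 1$. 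Hence there is $N \in \N$ such that for all $n \ge N$ we have $v_j \in B^u(\rho^n\varepsilon) \subseteq A^n(B^u(\varepsilon))$ for every $j = 1,\dots,k$, and moreover $A^n(B^u(\varepsilon)) \supseteq B^u(\varepsilon) + \{v_1,\dots,v_k\}$ once $n$ is large enough that $A^n(B^u(\varepsilon))$ (a ball of radius $\rho^n\varepsilon$ centred at $0$) contains all the finitely many sums $v_j + u$ with $\|u\|_p < \varepsilon$. Absorbing the finite set into $A^n(B^u(\varepsilon))$ this way gives, for every $n \ge N$,
\begin{equation*}
    \Q_S^d = A^n(B^u(\varepsilon)) + B^s(\varepsilon) + B^c(\varepsilon) + \Delta_S^d,
\end{equation*}
which is the claim.

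The only point that needs care — and what I expect to be the mild technical obstacle — is the last absorption step: one must check that $A^n(B^u(\varepsilon))$, which is the open ball of radius $\rho^n\varepsilon$ about $0$ in $E^u$ with respect to the adapted norm, really does contain the finite set $\{v_j + u : 1 \le j \le k,\ u \in B^u(\varepsilon)\}$ for all large $n$. This is immediate: that set is bounded (a finite union of translates of a bounded ball), say contained in the ball of radius $R$, and since $\rho^n \varepsilon \to \infty$ we have $\rho^n \varepsilon > R$ for $n$ large; here one uses the ultrametric/triangle inequality for $\|\cdot\|_p$ to see the set is bounded and that enlarging the radius enlarges the ball. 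Everything else is a routine combination of the density lemma, compactness of $X_S^d$, and property \eqref{eq ball image under A}, so no further difficulty arises.
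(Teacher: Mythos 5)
Your argument is correct and is essentially the paper's own proof: both use the density of $E^u+\Delta_S^d$ together with compactness of $\Q_S^d/\Delta_S^d$ to produce a \emph{bounded} piece of $E^u$ (the paper packages it as ``$B^u(R)+\Delta_S^d$ is $\varepsilon$-dense'' and ends up with $B^u(R+\varepsilon)$, you package it as a finite subcover with centers $v_1,\dots,v_k\in E^u$), and then absorb that bounded piece into $A^n\big(B^u(\varepsilon)\big)$ via \eqref{eq ball image under A}. One cosmetic caveat: $A^n\big(B^u(\varepsilon)\big)$ need not \emph{equal} the ball of radius $\rho^n\varepsilon$ (the expansion rates vary over eigenvalues and places), but you only use the containment $A^n\big(B^u(\varepsilon)\big)\supseteq B^u(\rho^n\varepsilon)$, which is exactly what \eqref{eq ball image under A} gives, so the proof stands.
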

\begin{proof}
    Let $\varepsilon>0$. Since $\Delta_S^d$ is cocompact, the last lemma implies that there exists $R>0$ such that $B^u(R) + \Delta_S^d$ is $\varepsilon$-dense in $\Q_S^d$. It follows from Equation \eqref{eq n ball decomposition} (with $n=0$) that
    \[\Q_S^d = B^u(R) + \Delta_S^d + B(\varepsilon) \subseteq B^u(R+\varepsilon) + B^s(\varepsilon) + B^c(\varepsilon) + \Delta_S^d.\]
    By Equation \eqref{eq ball image under A},  $B^u(R+\varepsilon) \subseteq A^n {\big(}B^u(\varepsilon){\big)}$ for every sufficiently large $n\in \N$, and the assertion follows.
\end{proof}
The definition below is inspired by the construction of the set $D(\varepsilon)$ in Marcus's paper \cite{marcus}. Given a rational polynomial $f$, we aim to construct a large set of integers $n\in \N$ such that for each unimodular root $\lambda$ of $f$, the powers $\lambda^n$ remain uniformly bounded away from $1$ in various fields.
\begin{definition}\label{def bounded below set}
    Let $f\in \Q[x]$ and let $S$ be a set that consists of $\infty$ together with a finite collection of prime numbers.
    We say that a subset $\PP\subseteq \N$ is \emph{bounded below for $f$ and $S$} if it has bounded gaps, and if for every $p\in S$, there exists a splitting field $K_p$ of $f$ over $\Q_p$, such that
    \[\inf \lvert \lambda^n - 1 \rvert_p > 0,\]    
    where $\lvert\;\rvert_{p}$ is the unique absolute value on $K_p$ extending the $p$-adic absolute value on $\Q_p$, and the infimum is taken over all $n\in \PP$ and all roots $\lambda\in K_p$ of $f$ with $\lvert \lambda\rvert_{p} =1$.
\end{definition}
We will show in Corollary \ref{cor bounded below set} that such a set exists, provided all of the elements $\lambda$ above are not roots of unity.

The following is the main result of this subsection and will be proved at its end.
\begin{proposition}\label{prop solenoid spec}
    Let $d\in \N$, $S\subseteq \N\cup\{\infty\}$ and $A\in \GL_d(\Z[\frac{1}{S}])$ be as defined at the beginning of Subsection \ref{subsection spec for solenoids} and
    denote by $f$ the characteristic polynomial of $A$. Let $\PP\subseteq \N$ be bounded below for $f$ and $S$.
    Then the dynamical system $(X_S^d,A)$ satisfies partial specification with periods $\PP$.
\end{proposition}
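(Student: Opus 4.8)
The plan is to mimic, in the setting of $\Q_S^d$ and the product decomposition $\Q_S^d = E^s \oplus E^c \oplus E^u$, the construction of periodic shadowing points for quasi-hyperbolic toral automorphisms due to Marcus, making the adjustments forced by the non-archimedean factors. Fix $\varepsilon > 0$ and a small auxiliary $\varepsilon_1 > 0$. First I would assemble the constants. Using the adapted norms of Proposition \ref{prop adapted norm} we have a rate $\rho > 1$ governing the contraction on $E^s$ and the expansion on $E^u$, while $A$ is an isometry on $E^c$. This is where the hypothesis that $\PP$ is bounded below for $f$ and $S$ enters: it yields $c_0 > 0$ with $|\lambda^n - 1|_p \ge c_0$ for all $n \in \PP$, all $p \in S$, and all roots $\lambda$ of $f$ with $|\lambda|_p = 1$; since $A$ is diagonalizable over a splitting field with exactly these $\lambda$ appearing on $E^c$, this bounds $\|(A^n - I)^{-1}\|$ on $E^c$ uniformly in $n \in \PP$, and together with the elementary estimates $\|(A^n - I)x\| \ge (1-\rho^{-n})\|x\|$ on $E^s$ and $\|(A^n - I)x\| \ge (\rho^n - 1)\|x\|$ on $E^u$ one gets $N_0, C_0$ with $(A^n - I)$ invertible on $\Q_S^d$ and $\|(A^n - I)^{-1}\| \le C_0$ whenever $n \in \PP$, $n \ge N_0$. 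Then pick $M$ so large that $\rho^{-M}$ times a fixed diameter bound is below $\varepsilon_1$ (with the usual uniform-continuity safety margin), and $N \ge N_0$ large enough that Corollary \ref{cor Q_s^d decomposition} applies with parameter $\varepsilon_1$ for every $n \ge N$ and that the trailing gap $[b_r, n)$ — of length $\ge n - b_r \ge \varepsilon b_r$ by the hypothesis $n \ge (1+\varepsilon)b_r$ — has length at least $M$.

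Given an $M$-spaced specification $\{(x_i; a_i, b_i)\}_{i=1}^r$ and $n \in \PP$ with $n \ge \max\{N, (1+\varepsilon) b_r\}$, the construction produces a lift $\tilde y \in (A^n - I)^{-1}\Delta_S^d$, so that $y := \tilde y + \Delta_S^d$ has period $n$. One views the orbit of $y$ as the $n$-periodic sequence $(A^m \tilde y)_{m \in \Z}$ and the segments as sitting inside one period $[0,n)$, separated by gaps of length $\ge M$ and followed by the trailing gap $[b_r, n)$, also of length $\ge M$. The prescribed pieces are glued together in the $E^s$ and $E^u$ directions by the standard local-product-structure argument: each piece $\tilde x_i$ — or, more precisely, a $\Delta_S^d$-translate of it selected via Corollary \ref{cor Q_s^d decomposition} — contributes a stable correction anchored at the left endpoint $a_i$, which decays forward and whose backward growth is absorbed by the preceding gap of length $M$, and an unstable correction anchored at the right endpoint $b_i$ behaving symmetrically. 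The density of the projection of $E^u$ to the solenoid (Corollary \ref{cor Q_s^d decomposition}) is what lets the unstable target of the next piece, and the wrap-around across the trailing gap where periodicity is enforced, be reached with error $< \varepsilon_1$ in the $E^s$ and $E^c$ components and error that becomes $< \varepsilon_1$ after one full turn in the $E^u$ component. The central direction is handled simultaneously: each invocation of Corollary \ref{cor Q_s^d decomposition} absorbs the accumulated central discrepancy into $\Delta_S^d$ up to a residual $< \varepsilon_1$, and since $A$ is an isometry on $E^c$ this residual neither grows nor accumulates along a segment; the bound $C_0$ from the first paragraph is exactly what guarantees that a $\tilde y$ with all of the above behaviour can still be chosen in $(A^n - I)^{-1}\Delta_S^d$ with central part $O(\varepsilon_1)$.

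Collecting the estimates, for every $i$ and every $m \in [a_i,b_i)$ outside a set of density $< \varepsilon$ — the exceptional times being those near a segment boundary where a stable or unstable correction is not yet within $\varepsilon_1$, the degenerate cases of very short segments or of all segments lying in a bounded window being disposed of separately exactly as the short-pseudo-orbit case in the proof of Lemma \ref{lemma spec implies shadow} — one produces a single $\delta \in \Delta_S^d$ with $d_{\Q_S^d}(A^m(\tilde x_i - \tilde y), \delta) < \varepsilon$, hence $d_{X_S^d}(A^m x_i, A^m y) < \varepsilon$. Thus $y$ has period $n$ and $\varepsilon$-partially traces $\xi$, which is the assertion of partial specification with periods $\PP$.

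The main obstacle is precisely the central subspace $E^c$, which is genuinely present in general (e.g.\ for Salem-type characteristic polynomials) and on which $A$ supplies no hyperbolicity to erase tracing errors: one has to play the density of the unstable direction in the solenoid — used to push central discrepancies into $\Delta_S^d$ — against the requirement that the tracing point be \emph{exactly} periodic, and it is exactly here that the bounded-below hypothesis on $\PP$ (equivalently, that the central eigenvalues are not roots of unity and that $\PP$ avoids their near-resonances) is indispensable, through the uniform invertibility of $A^n - I$ on $E^c$. Everything else — the stable/unstable gluing, the bookkeeping of the exceptional set, the uniform-continuity margins — is a routine, if lengthy, adaptation of the arguments of Bowen and Marcus to the $S$-adic setting.
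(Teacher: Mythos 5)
Your construction is correct and follows essentially the same route as the paper, which merely factors your one-pass argument into two steps: a non-periodic tracing statement (Proposition \ref{prop nonperiodic spec}, the inductive gluing via Corollary \ref{cor Q_s^d decomposition} in which the corrections seen by earlier segments are purely unstable, so central errors do not accumulate) followed by a closing lemma (Proposition \ref{prop closing lemma}, where the bounded-below hypothesis on $\PP$ gives the uniform bound on $(I-A^n)^{-1}$ restricted to $E^c$ and the unstable part of the periodicity correction is absorbed into the trailing gap $[b_r,n)$ guaranteed by $n\ge(1+\varepsilon)b_r$). All of your key ingredients --- the adapted norms, the decomposition $E^s\oplus E^c\oplus E^u$, the density of $E^u+\Delta_S^d$, and the uniform invertibility of $A^n-I$ on the central direction --- coincide with the paper's.
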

In fact, we will prove a slightly stronger statement, but this version will suffice for our goal of establishing a similar result for more general groups.
The first step towards proving Proposition \ref{prop solenoid spec} is to show that every orbit in the solenoid can be approximated by a periodic one.
Let $d_{X_S^d}$ denote the quotient metric on $X_S^d$ induced by the metric $d_{\Q_S^d}$ from Equation \eqref{eq metric on Q_S^d}, as described in Equation \eqref{eq metric on quotient}.
\begin{proposition}[cf.\ {\cite[Proposition 3]{marcus}}]\label{prop closing lemma}
    Let $d\in\N$, $S\subseteq \N\cup\{\infty\}$ and $A\in \GL_d(\Z[\frac{1}{S}])$ be as defined at the beginning of Subsection \ref{subsection spec for solenoids} and denote by $f$ the characteristic polynomial of $A$. Let $\PP\subseteq \N $ be bounded below for $f$ and $S$ and let $0<\varepsilon<1$. Then every sufficiently large $n\in \PP$ satisfies the following: for every $x\in X_S^d$ there exists $y\in X_S^d$ such that $A^n y=y$ and $d_{X_S^d}(A^i x,A^i y)<\varepsilon$ for every integer $0\le i \le (1-\varepsilon)n$.
\end{proposition}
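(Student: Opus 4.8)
The plan is to mimic the classical Anosov closing lemma, adapted to the $S$-adic solenoid and weakened to allow a small proportion of the orbit to drift (indeed, the central directions $E^c$ prevent genuine shadowing of the whole orbit). First I would lift the given point $x \in X_S^d$ to some $\tilde x \in \Q_S^d$ and use Corollary \ref{cor Q_s^d decomposition}: for $n$ large (how large depends only on $\varepsilon$), we can write $A^n \tilde x - \tilde x = A^n u + s + c + \delta$ with $u \in B^u(\varepsilon_0)$, $s \in B^s(\varepsilon_0)$, $c \in B^c(\varepsilon_0)$ and $\delta \in \Delta_S^d$, for a suitably small auxiliary radius $\varepsilon_0$ to be fixed below. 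The candidate periodic lift is obtained by solving, inside each of the three invariant subspaces, the fixed-point equation $A^n y = y + (\text{corresponding correction term}) + \delta'$ for an appropriate lattice element $\delta'$; since $A$ acts as a contraction on $E^s$ and an expansion on $E^u$ (Proposition \ref{prop adapted norm}), the operators $A^n - \Id$ on $E^s$ and $\Id - A^{-n}$ on $E^u$ are invertible with norm bounds independent of $n$, so the stable and unstable components of the correction can be absorbed with a loss of at most a factor $(1-\rho^{-1})^{-1}$ in size.

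The central component is where the hypothesis that $\PP$ is bounded below for $f$ and $S$ enters, and I expect this to be the main obstacle. On $E^c$ the map $A$ is an isometry, so $A^n - \Id$ need not be invertible and, even when it is, its inverse can have arbitrarily large norm as $n$ varies — unless we restrict $n$ to lie in $\PP$. For $n \in \PP$, Definition \ref{def bounded below set} guarantees that every unimodular eigenvalue $\lambda$ of $A$ satisfies $|\lambda^n - 1|_p \ge \kappa > 0$ uniformly, hence the eigenvalues of $A^n|_{E^c} - \Id$ are bounded away from $0$ uniformly over $n \in \PP$, so $(A^n|_{E^c} - \Id)^{-1}$ has norm bounded by $\kappa^{-1}$ (using the adapted norm, diagonalizing over the splitting field $K_p$ and taking the max over $p \in S$). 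This lets us solve the central equation too, with a size loss of at most $\kappa^{-1}$. Choosing $\varepsilon_0 = \varepsilon / C$ where $C$ is the product of these finitely many loss factors, the resulting $y = \pi(\tilde y) \in X_S^d$ satisfies $A^n y = y$ and $d_{\Q_S^d}(\tilde x, \tilde y) < \varepsilon$.

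Finally I would propagate the estimate along the orbit. Decompose $\tilde x - \tilde y = v^s + v^c + v^u$ according to \eqref{eq Q_p^d decomposition} (componentwise over $p \in S$); property (1) of the adapted norm gives $\|v^s\|, \|v^c\|, \|v^u\| < \varepsilon$. For $0 \le i \le (1-\varepsilon) n$, the stable part stays small by forward contraction, $\|A^i v^s\| \le \rho^{-i}\varepsilon < \varepsilon$; the central part is preserved exactly by the isometry, $\|A^i v^c\| = \|v^c\| < \varepsilon$; and for the unstable part one uses periodicity to run time \emph{backwards}: since $A^n \tilde y \equiv \tilde y$, we have $A^i v^u = A^{i-n} A^n v^u$ up to a lattice vector, and $\|A^{i-n} (\cdot)\| \le \rho^{i-n}(\cdot)$ contracts because $i - n \le -\varepsilon n < 0$ — here is where the range of $i$ must stop at $(1-\varepsilon)n$. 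Summing, $d_{\Q_S^d}(A^i \tilde x, A^i \tilde y) < \varepsilon$, and passing to the quotient metric via \eqref{eq metric on quotient} yields $d_{X_S^d}(A^i x, A^i y) < \varepsilon$ for all $0 \le i \le (1-\varepsilon)n$, as required. The only subtlety in this last step is bookkeeping the lattice elements $\delta'$ introduced in solving the fixed-point equations, but since they are fixed once and for all and the quotient metric only ever decreases distances, they do not affect the bounds.
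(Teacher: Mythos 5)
Your overall architecture (lift to $\Q_S^d$, solve a fixed-point equation separately on $E^s$, $E^c$, $E^u$, and use the bounded-below property of $\PP$ to invert $\Id-A^n$ on the central part after diagonalizing over $K_p$) is the right one and matches the paper, but the unstable estimate in your propagation step has a genuine gap, and it is exactly the point where this proposition differs from a standard Anosov closing lemma. You apply Corollary \ref{cor Q_s^d decomposition} with the full exponent $n$, writing $A^n\tilde x-\tilde x=A^nu+s+c+\delta$ with $u\in B^u(\varepsilon_0)$; the fixed-point equation then gives an unstable discrepancy $v^u=(\Id-A^{-n})^{-1}u$, of size only about $\varepsilon_0$. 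Your bound $\lVert A^iv^u\rVert\le\rho^{i-n}\lVert A^nv^u\rVert$ is correct but vacuous: from $(\Id-A^{-n})v^u=u$ one gets $A^nv^u=A^nu+v^u$, so $\lVert A^nv^u\rVert$ is comparable to $\lVert A^nu\rVert$, which is exponentially large in $n$ (the vector $u$ is an essentially arbitrary small unstable vector, expanded for the full $n$ steps), and even in the most favorable case the bound reduces to $\rho^{i}\varepsilon_0$, i.e.\ to the forward growth of $v^u$ itself, which exceeds $\varepsilon$ after $O(\log(1/\varepsilon_0))$ steps --- far short of $(1-\varepsilon)n$. The phrase ``up to a lattice vector'' silently assumes that $A^n(\tilde x-\tilde y)$ is small modulo $\Delta_S^d$, i.e.\ that $A^nx$ nearly returns to $x$; but the proposition assumes no recurrence of $x$ (it is asserted for \emph{every} $x$), and $A^nu$ is not close to any lattice element in general. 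In the classical closing lemma the smallness of the discrepancy at time $n$ is an explicit hypothesis ($d(A^nx,x)<\delta$); here it is simply absent, so running time backwards from $n$ gains nothing.

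The missing idea is to pre-contract the unstable correction rather than take it of size $\varepsilon_0$: apply Corollary \ref{cor Q_s^d decomposition} with the exponent $\lfloor n\delta\rfloor$ for a small $\delta<\varepsilon$ (legitimate once $n\delta\ge N$), so that the unstable term of the decomposition is $v_u\in A^{\lfloor n\delta\rfloor}\bigl(B^u(\delta)\bigr)$. Then $(\Id-A^n)^{-1}v_u\in A^{-n+\lfloor n\delta\rfloor}(A^{-n}-\Id)^{-1}\bigl(B^u(\delta)\bigr)\subseteq A^{-n+\lfloor n\varepsilon\rfloor}\bigl(B^u(\varepsilon)\bigr)$, using \eqref{eq ball image under A}; that is, the unstable part of $\tilde x-\tilde y$ is not merely small but exponentially small, of order $\rho^{-(n-n\delta)}$, and by \eqref{eq n ball decomposition} the whole difference lies in $B(\varepsilon,\,n-\lfloor n\varepsilon\rfloor)$, which is precisely the set that withstands forward iteration for all $0\le i\le(1-\varepsilon)n$. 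Your stable and central estimates (including the uniform bound on $(\Id-A^n|_{E^c})^{-1}$ for $n\in\PP$ coming from Definition \ref{def bounded below set}) are sound and agree with the paper; once the unstable bookkeeping is corrected as above, your argument becomes the paper's proof.
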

\begin{proof}
    Let $0<\delta<\varepsilon$ be sufficiently small and then let $N\in\N$ correspond to $\delta$ as in Corollary \ref{cor Q_s^d decomposition}. Next, let $n\in\PP$ be sufficiently large. The precise meaning of “sufficiently small” and “sufficiently large” will be clarified in the course of the proof. To begin with, we require that $n\delta\ge N$.
    Let $x\in X_S^d$ and choose $x'\in \Q_S^d$ which projects to $x$. By the choice of $N$, we can write $x'-A^n x' = v + w$, where $w\in \Delta_S^d$ and
    \begin{equation}\label{eq v decomposition}
        v = v_u + v_s + v_c \in A^{\lfloor n\delta\rfloor} {\big(}B^u(\delta){\big)} + B^s(\delta) + B^c(\delta).
    \end{equation}
    We claim that $(I-A^n)^{-1}v\in B(\varepsilon, n - \lfloor n\varepsilon\rfloor)$ (where $I$ is the identity matrix and $(I-A^n)$ is invertible since no eigenvalue of $A$ is a root of unity; see also Equation \eqref{eq ball def}). Let us first indicate how to complete the proof, assuming this holds. Set $y' = (I-A^n)^{-1}w\in \Q_S^d$. Then $A^n y' = y' - w$, which means that $y'$ projects to a point of period $n$ in $X_S^d$.
    Furthermore,
    \[x' - y' = (I-A^n)^{-1}v\in B(\varepsilon, n-\lfloor n\varepsilon\rfloor),\]
    and hence the projection of $y'$ to $X_S^d$ has the desired properties.
    
    We are left with proving that $(I-A^n)^{-1}v\in B(\varepsilon, n - \lfloor n\varepsilon\rfloor)$. By Equation \eqref{eq n ball decomposition} it suffices to show that
    \begin{equation}\label{eq inclusions to prove}
        (I-A^n)^{-1}v_s \in B^s(\varepsilon),\quad (I-A^n)^{-1}v_c \in B^c(\varepsilon),\quad (I-A^n)^{-1}v_u\in A^{-n + \lfloor n\varepsilon\rfloor}B^u(\varepsilon).
    \end{equation}
    Notice that $A^m y \underset{m\to\infty}{\to}0$ uniformly on $B^s(\delta)$, and hence if $\delta$ is small enough and $n$ is large enough then $(I-A^n)^{-1}v_s \in B^s(\varepsilon)$.
    By the same reasoning, $(A^{-n}-I)^{-1}B^u(\delta)\subseteq B^u(\varepsilon)$. Thus,
    \[(I-A^n)^{-1}A^{\lfloor n\delta\rfloor}{\big(}B^u(\delta){\big)} = A^{-n + \lfloor n\delta\rfloor}(A^{-n}-I)^{-1}{\big(}B^u(\delta){\big)}\subseteq A^{-n + \lfloor n\delta\rfloor}{\big(}B^u(\varepsilon){\big)},\]
    and in particular by Equation \eqref{eq v decomposition} (and also Equation \eqref{eq ball image under A} to replace $n\delta$ with $n\varepsilon$), $(I-A^n)^{-1}v_u\in A^{-n + \lfloor n\varepsilon\rfloor}{\big(}B^u(\varepsilon){\big)}$.
    
    We now proceed to prove the final assertion of Equation \eqref{eq inclusions to prove}, concerning $B^c(\varepsilon)$. For every $p\in S$, let $K_p$ be the splitting field of $f$ over $\Q_p$ that appears in the definition of $\PP$ (Definition \ref{def bounded below set}), and $\lambda_{p,1},\dots,\lambda_{p,r_p}\in K_p$ the unimodular roots of $f$.
    Assume first that $r_p > 0$. Since we assume $f$ is irreducible over $\Q$, it has no multiple roots, and hence we can choose $M_p\in \GL_{r_p}(K_p)$ that diagonalizes $\restr{A}{E^c}$. Then
    \[(I-\restr{A^n}{E^c})^{-1}=M_p^{-1} \begin{pmatrix}
        (1-\lambda_{p,1}^n)^{-1} \\ & \ddots \\ & & (1-\lambda_{p,r_p}^n)^{-1}
    \end{pmatrix}M_p\]
    (cf.\ \cite[Proposition 2.2]{rigidity_properties}, \cite{berend}).
    By the definition of $\PP$,
    \[\sup_{\substack{1\le i\le r_p \\ m\in\PP}}\lvert (1-\lambda_{p,i}^m)^{-1}\rvert_p<\infty,\]
    and thus the collection of matrices $\{(I-\restr{A^m}{E^c})^{-1}:m\in\PP\}$ is contained in a compact subset of matrices over $K_p$.
    Denote $B_p^c(\delta) = \{z\in E_p^c:\lVert z\rVert_p<\delta\}$. Then for sufficiently small $\delta$ (independent of $n$), we have
    $(I-A^n)^{-1}{\big(}B_p^c(\delta){\big)} \subseteq B_p^c (\varepsilon)$.
    If $r_p = 0$, then $E_p^c = 0$ and the last inclusion holds trivially.
    Since this inclusion holds for all $p\in S$,
    we conclude that $(I-A^n)^{-1}{\big(}B^c(\delta){\big)} \subseteq B^c (\varepsilon)$,
    verifying Equation \eqref{eq inclusions to prove} and completing the proof.
\end{proof}
The next step towards Proposition \ref{prop solenoid spec} addresses the tracing part of the partial specification, setting aside the periodicity requirement. Similar results appear in \cite[Theorem 6.5]{lind_split} and in \cite[Theorem 2]{solenoidal} (see also \cite{dateyama}), though these references work within a framework that differs substantially from ours, using a different model for the solenoid. Since the tools we have developed so far allow for a short proof, closer in spirit to \cite[Lemma 2.1]{marcus}, we include it here for completeness.
\begin{proposition}\label{prop nonperiodic spec}
    Let $d\in\N$, $S\subseteq \N\cup\{\infty\}$ and $A\in \GL_d(\Z[\frac{1}{S}])$ be as defined at the beginning of Subsection \ref{subsection spec for solenoids}. Then for every $\varepsilon>0$ there exists $M\in \N$ such that every $M$-spaced specification in $X_S^d$ can be $\varepsilon$-partially traced (with respect to $A$).
\end{proposition}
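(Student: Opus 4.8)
\subsection*{Proof plan}

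The plan is to work in $\Q_S^d$ and build the tracing point by successively correcting an approximation, one orbit segment at a time, in the spirit of \cite[Lemma 2.1]{marcus}. After replacing each $x_i$ by $A^{a_1}x_i$ and subtracting $a_1$ from all the times --- which changes neither the $M$-spacing nor the existence of a tracing point (if $y$ traces the shifted specification then $A^{a_1}y$ traces the original) --- we may assume $a_1=0$. Lift each $x_i$ to some $\tilde x_i\in\Q_S^d$. Starting from $\tilde y^{(1)}=\tilde x_1$, suppose inductively that the image of $\tilde y^{(k)}\in\Q_S^d$ already partially traces $\{(x_i;a_i,b_i)\}_{i=1}^{k}$ with slightly sharper constants than required; we then set $\tilde y^{(k+1)}=\tilde y^{(k)}+c_{k+1}$, where the correction $c_{k+1}\in\Q_S^d$ is chosen so that the image of $\tilde y^{(k+1)}$ additionally traces the new block $(x_{k+1};a_{k+1},b_{k+1})$ while disturbing the earlier blocks by a controlled, summable amount. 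The desired tracing point is the image of $\tilde y^{(r)}$.

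The correction is constructed using the splitting $\Q_S^d=E^s\oplus E^c\oplus E^u$ and an adapted norm as in Proposition \ref{prop adapted norm}, for which $A$ contracts $E^s$, expands $E^u$, and acts isometrically on $E^c$. Consider the discrepancy $D=\tilde x_{k+1}-\tilde y^{(k)}$. Using Corollary \ref{cor Q_s^d decomposition}, together with the density of $E^u+\Delta_S^d$ in $\Q_S^d$ established earlier (and its analogue for $E^s$, which follows from the very same annihilator computation), one subtracts a suitable element of $\Delta_S^d$ from $D$ and splits the result into its three components in such a way that its central and unstable components are small, while its stable component --- although possibly large --- is contracted below the target precision by the $\ge M$ steps of the gap and hence is already negligible once block $k+1$ begins at time $a_{k+1}$. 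One then distributes this reduced $D$ between $c_{k+1}$ (which must be negligible on blocks $1,\dots,k$) and $c_{k+1}-D$ (which must be negligible on block $k+1$), component by component, the sizes being governed by Equations \eqref{eq ball image under A} and \eqref{eq n ball decomposition}: the stable part of $c_{k+1}$ is taken tiny, and its unstable part tiny up to time $b_k$, so that after $A$-iteration it has decayed below the target precision on every earlier block, the total disturbance over all later corrections forming a convergent series once $M$ is large.

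Partial (rather than full) tracing is used in two ways: to ignore, on each block, the vanishing proportion of indices near its two endpoints where the residual stable and unstable pieces of the corrections have not yet fallen below $\varepsilon$, and to provide the slack that absorbs the accumulated perturbation coming from the later corrections. Choosing $M=M(\varepsilon)$ (and the auxiliary $N$ from Corollary \ref{cor Q_s^d decomposition}) large enough then closes all the estimates.

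The main obstacle is the central subspace $E^c$, which is typically nonzero here precisely because $A$ is ergodic without being hyperbolic. Since $A$ acts on $E^c$ by an isometry, a central error neither contracts forward (as a stable error does) nor is small in the distant past (as an unstable error is), so it cannot be dissipated dynamically; it must be removed by a judicious choice of lattice representative of $D$, and arranging this cancellation so that the leftover stable residue still fits within the room afforded by a gap of length $\ge M$ is the delicate quantitative point. This is exactly where the density of $E^u+\Delta_S^d$ (and $E^s+\Delta_S^d$) in $\Q_S^d$, weighed against the expansion rate $\rho$ over the gap, is invoked.
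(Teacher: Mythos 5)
Your proposal is correct and follows essentially the same route as the paper's proof: an inductive, block-by-block correction in $\Q_S^d$ using the adapted norm and the $E^s\oplus E^c\oplus E^u$ splitting, with each correction purely unstable (hence decaying geometrically on all earlier blocks, yielding the convergent series) and the bulk of the discrepancy disposed of across the gap of length $\ge M$ via the density of the unstable (equivalently, stable) leaf modulo $\Delta_S^d$, packaged in the paper as Corollary \ref{cor Q_s^d decomposition}. The only cosmetic difference is that the paper writes the discrepancy at time $a_{k+1}$ as an element of $A^m\big(B^u(\delta)\big)+B^s(\delta)+B^c(\delta)+\Delta_S^d$ rather than placing a large stable component at time $b_k$ to be contracted across the gap; the two decompositions coincide up to applying $A^{m}$, and the resulting argument in fact achieves full (not merely partial) tracing, so the slack you reserve for indices near the block endpoints is never needed.
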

\begin{proof}
    Let $\varepsilon>0$ and set $\delta = (1-\rho^{-1})\varepsilon$, where $\rho>1$ is as in Equation \eqref{eq ball image under A}.
    Apply Corollary \ref{cor Q_s^d decomposition} to find $M\in \N$ such that for every $m\ge M$,
    \begin{equation}\label{eq decomposition for M}
        \Q_S^d= A^m (B^u(\delta)) + B^s(\delta) + B^c(\delta) + \Delta_S^d.
    \end{equation}
    Denote by $\pi:\Q_S^d\to X_S^d$ the canonical projection, and for every $x \in X_S^d$ and $n\in \N_0$ consider the subsets of $X_S^d$ defined by
    \[\mathcal{C}(x,n) = A^n{\big(}(x+ \pi(B^u(\delta)){\big)},\qquad \mathcal{D}(x,n) = x + \pi(B(\delta,n))\]
    (see Equation \eqref{eq ball def}).
    We claim that for every $x,y\in X_S^d$ and every $n,m\in \N$ with $m\ge M$ we have $\mathcal{C}(x,m)\cap \mathcal{D}(y, n)\ne\emptyset$. Indeed, choose preimages $x',y'\in \Q_S^d$ of $x,y$. By Equation \eqref{eq decomposition for M}, there exist $z_s\in B^s(\delta)$ and $z_c\in B^c(\delta)$ such that
    \[y'-A^mx' + z_s + z_c \in A^m (B^u(\delta)) + \Delta_S^d.\]
    Projecting the last equation to $X_S^d$ shows that $y + \pi(z_s + z_c) \in \mathcal{C}(x,m)$, and Equation \eqref{eq n ball decomposition} implies that also $y + \pi(z_s + z_c) \in \mathcal{D}(y,n)$.
    
    Let $\{(x_i; a_i,b_i)\}_{i=1}^r$ be an $M$-spaced specification in $X_S^d$.
    We now define a sequence of points $y_1,\dots,y_r\in X_S^d$ by induction on $k\le r$, such that for every $1\le i \le k$ and $a_i\le n \le b_i$,
    \[d_{X_S^d}(A^n x_i, A^n y_k) < \delta S_{k-i}\quad\text{where}\quad S_\ell = \sum_{j=0}^{\ell}\rho^{-j}.\]
    This will conclude the proof, since $y_r$ will satisfy $d_{X_S^d}(A^n x_i, A^n y_r)< \delta (1-\rho^{-1})^{-1}=\varepsilon$.
    For $k=1$ we can simply take $y_1 = x_1$. Assume now $y_k$ has been defined. Since $a_{k+1}-b_k \ge M$, we have
    \begin{equation}\label{eq big intersection}
        \mathcal{C}(A^{b_k}y_k, a_{k+1}-b_k)\cap \mathcal{D}(A^{a_{k+1}}x_{k+1}, b_{k+1}-a_{k+1})\ne\emptyset,
    \end{equation}
    so we may choose a point $y_{k+1}$ such that $A^{a_{k+1}}y_{k+1}$ lies in the above intersection.
    The membership in the latter set precisely means that for every $a_{k+1}\le n \le b_{k+1}$,
    \[d_{X_S^d}(A^n x_{k+1}, A^n y_{k+1}) < \delta.\]
    Let $1\le i\le k$ and $a_i\le n < b_i$. Unpacking the definition of the former set in Equation \eqref{eq big intersection} and applying $A^{-(a_{k+1}-n)}$ to both sides yields
    \[ A^n y_{k+1} \in A^n y_k + A^{-(b_k-n)}\pi(B^u(\delta))\subseteq A^n y_k + \pi(B^u(\rho^{-(b_k-n)}\delta)),\]
    where we used Equation \eqref{eq ball image under A} to obtain the last inclusion.
    Notice that $b_k - n \ge b_k-b_i + 1\ge k - i + 1$, and thus from the induction hypothesis and the last equation we obtain
    \begin{align*}
        d_{X_S^d}(A^nx_i,A^n y_{k+1})&\le d_{X_S^d}(A^n x_i, A^n y_k) + d_{X_S^d}(A^n y_k, A^n y_{k+1}) \\
        &< \delta S_{k-i} + \delta\rho^{-(k+1-i)}=\delta S_{k+1-i},
    \end{align*}
    completing the construction of the sequence $(y_k)$ and the proof.
\end{proof}
All that remains to prove Proposition \ref{prop solenoid spec} is to combine the last two propositions.
\begin{proof}[Proof of Proposition \ref{prop solenoid spec}]
    Let $\varepsilon>0$, and we may assume that $\varepsilon<1$. Let $M\in \N$ be provided by Proposition \ref{prop nonperiodic spec} for $\varepsilon/2$, and $N\in\N$ large enough so that Proposition \ref{prop closing lemma} holds for $\varepsilon/2$ and for every $n\in \PP$ with $n\ge N$. Suppose we are given an $M$-spaced specification $\xi=\{(x_i;a_i,b_i)\}_{i=1}^r$ in $X_S^d$ and some $n\in \PP$ such that $n\ge \max\{N,(1+\varepsilon)b_r\}$. By Proposition \ref{prop nonperiodic spec}, there exists $x\in X_S^d$ that $\varepsilon/2$-partially traces $\xi$. Next we use Proposition \ref{prop closing lemma} to find $y\in X_S^d$ of period $n$ such that $d_{X_S^d}(A^ix,A^iy)<\varepsilon/2$ for every $0\le i \le (1-\varepsilon/2)n$.
    But $(1-\varepsilon/2)n\ge (1-\varepsilon/2)(1+\varepsilon)b_r\ge b_r$, and it follows that $\xi$ is $\varepsilon$-partially traced by $y$.
\end{proof}
We now turn to proving the existence of bounded below sets (Definition \ref{def bounded below set}).
It will also be important later that such sets can be chosen to lie in $c\N$ for any $c\in\N$.
\begin{lemma}
    Let $G$ be an abelian group, written in multiplicative notation, with a $G$-invariant metric $d_G$. Suppose $g_1,\dots,g_r\in G$ are elements of infinite order. Then for every $c\in\N$ there exists a subset $\PP\subseteq c\N$ with bounded gaps such that
    \[\inf_{\substack{1\le i\le r \\ n\in\PP}} d_G(g_i^n,1) > 0.\]
\end{lemma}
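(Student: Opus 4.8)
The plan is to first reduce to the case $c=1$ and then argue by contradiction using only the subadditivity of the functions $n\mapsto d_G(g_i^n,1)$ — no compactness of $G$ is needed. For the reduction, note that since $G$ is abelian each $g_i^c$ again has infinite order, so if I can find a syndetic $\PP_0\subseteq\N$ with $\inf\{d_G((g_i^c)^m,1):1\le i\le r,\ m\in\PP_0\}>0$, then $\PP:=c\PP_0\subseteq c\N$ is syndetic (its gaps are at most $c$ times those of $\PP_0$) and $d_G(g_i^n,1)=d_G((g_i^c)^{n/c},1)$ for $n\in\PP$, which is bounded below. Hence it is enough to treat $c=1$.

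For $1\le i\le r$ set $\psi_i\colon\Z\to\R_{\ge0}$, $\psi_i(n)=d_G(g_i^n,1)$. Using $G$-invariance of $d_G$ one checks at once that each $\psi_i$ is even ($\psi_i(-n)=\psi_i(n)$), subadditive ($\psi_i(m+n)\le\psi_i(m)+\psi_i(n)$), and strictly positive on $\Z\setminus\{0\}$ because $g_i$ has infinite order. It suffices to find $\delta>0$ for which $S_\delta:=\{n\in\N:\psi_i(n)\ge\delta\ \text{for all }i\}$ is syndetic, and then take $\PP=S_\delta$. Suppose no such $\delta$ exists; then for every $\delta>0$ the set $S_\delta$ is not syndetic, so for every $L\in\N$ there is a block of $L$ consecutive positive integers at each of whose points $\min_{1\le i\le r}\psi_i<\delta$.

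The heart of the argument is then a colouring-plus-pigeonhole step. For each $m\in\N$ I would fix a block $R_m$ of $m$ consecutive positive integers with $\min_i\psi_i(n)<1/m$ for every $n\in R_m$, and for each $n\in R_m$ choose an index $j_m(n)\in\{1,\dots,r\}$ with $\psi_{j_m(n)}(n)<1/m$. Some value $c_m\in\{1,\dots,r\}$ is then attained by $j_m$ on a set $T_m\subseteq R_m$ with $\lvert T_m\rvert\ge m/r$; since $T_m$ lies in an interval of length $m$, once $m\ge2r$ two of its elements $p_m<q_m$ satisfy $\ell_m:=q_m-p_m<2r$. By subadditivity together with $\psi_{c_m}(-p_m)=\psi_{c_m}(p_m)$,
\[\psi_{c_m}(\ell_m)\le\psi_{c_m}(q_m)+\psi_{c_m}(p_m)<\tfrac{2}{m}.\]
The pairs $(c_m,\ell_m)$ lie in the finite set $\{1,\dots,r\}\times\{1,\dots,2r-1\}$, so some fixed pair $(c_*,\ell_*)$ occurs for infinitely many $m$; along that subsequence $\psi_{c_*}(\ell_*)<2/m\to0$, forcing $\psi_{c_*}(\ell_*)=0$, i.e. $g_{c_*}^{\ell_*}=1$ with $\ell_*\ge1$. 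This contradicts $g_{c_*}$ having infinite order, and the proof is complete.

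The one genuinely delicate point is the simultaneity over the $r$ elements. For a single element $g$ the set $\{n:\psi(n)\ge\psi(1)/2\}$ is already syndetic with gaps at most $2$, since two consecutive small values of $\psi$ would violate subadditivity; but intersecting such sets over $i=1,\dots,r$ need not give a syndetic set (the single-$i$ sets can be arranged to alternate). The colouring argument above is precisely what takes the place of that naive intersection, and it is the only place the infinite-order hypothesis is invoked. I do not expect the routine verifications (the properties of $\psi_i$, the gap bound in the pigeonhole step, the behaviour of gaps under $\PP_0\mapsto c\PP_0$) to cause any trouble.
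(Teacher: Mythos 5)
Your proof is correct; it shares its key mechanism with the paper's but is organized quite differently. The paper argues directly: it fixes $\delta>0$ so small that $B(\delta)\cap g_i^{cm}B(\delta)=\emptyset$ for all $i$ and all $1\le m\le r+1$ (possible because the finitely many elements $g_i^{cm}$ are nontrivial), and then greedily builds $n_1<n_2<\cdots$ with increments in $\{1,\dots,r+1\}$: for each $i$ at most one candidate increment would land $g_i^{cn}$ back in the ball $B(\delta)$, so a pigeonhole over the $r$ indices always leaves an admissible increment. This produces $\PP=\{cn_k\}$ with the explicit gap bound $c(r+1)$ and an explicit lower bound $\delta$. You instead argue by contradiction: assuming no threshold $\delta$ yields a syndetic set, you extract blocks of length $m$ on which $\min_i\psi_i<1/m$, use a colouring pigeonhole to find a single index whose powers at two positions within distance $2r$ of each other are both within $1/m$ of the identity, and let $m\to\infty$ to force torsion. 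Both proofs rest on the same two ingredients — invariance of the metric (so that two nearby near-identity powers of $g_i$ force $g_i^{\ell}$ near the identity for a bounded $\ell\ge 1$) and a pigeonhole over the $r$ elements — but the paper's version is constructive and shorter, whereas yours is non-constructive in $\delta$ and passes through a limiting argument. Your reduction to $c=1$, the verification that $\psi_i$ is even, subadditive and positive off $0$, and the quantitative gap estimate (minimum consecutive gap in $T_m$ at most $r(m-1)/(m-r)<2r$ once $m\ge 2r$) all check out.
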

\begin{proof}
    Let $c\in \N$.
    It suffices to construct an increasing sequence $(n_k)_{k=1}^\infty$ in $\N$ with bounded gaps, such that
    \begin{equation}\label{eq infimum}
        \inf_{\substack{1\le i\le r \\ k\in\N}}d_G(g_i^{cn_k}, 1) > 0,
    \end{equation}
    since we can then set $\PP = \{cn_k :k\in \N\}$.
    For $\delta >0$ denote by $B(\delta)\subseteq G$ the open ball of radius $\delta$ centered at $1$.
    As the elements $g_i$ have infinite order, we can choose $\delta > 0$ such that for every $i=1,\dots,r$ and $m=1,\dots,r+1$,
    \begin{equation}\label{eq ball translation}
        B(\delta)\cap g_i^{cm}B(\delta)=\emptyset.
    \end{equation}
    In particular, for every $n\in \N$ and $i=1,\dots,r$,
    \begin{equation}\label{eq delta rotation2}
        {\Big\lvert}\{1\le m \le r+1 : g_i^{cm+cn}\in B(\delta)\}{\Big\rvert} \le 1.
    \end{equation}
    We construct a sequence $(n_k)$ with bounded gaps inductively so that for every $i=1,\dots,r$, we have $g_i^{cn_k}\notin B(\delta)$. By Equation \eqref{eq ball translation} we may take $n_k=1$.
    Suppose $n_k$ has been defined. Then, by Equation \eqref{eq delta rotation2}, for each $i=1,\dots,r$, there exists at most one integer $1 \le m_i \le r + 1$ such that $g_i^{c(n_k+m_i)} \in B(\delta)$. Hence, we can choose an integer $1 \le m \le r+1$ with $m\ne m_1,\dots, m_r$, and set $n_{k+1} = n_k + m$, completing the construction. It follows that the left-hand side of Equation \eqref{eq infimum} is bounded below by $\delta$.
\end{proof}
\begin{corollary}\label{cor bounded below set}
    Suppose that $f\in \Q[x]$ has no roots of unity among its roots and let $S$ be a set that consists of $\infty$ together with a finite collection of prime numbers. Then for any $c\in \N$, there exists a subset $\PP\subseteq c\N$ which is bounded below for $f$ and $S$.
\end{corollary}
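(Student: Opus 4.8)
The plan is to deduce the statement directly from the preceding lemma, by packaging the finitely many unimodular roots of $f$, across all places $p\in S$, as elements of infinite order in a single abelian group carrying an invariant metric.

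First I would fix, for each $p\in S$, a splitting field $K_p$ of $f$ over $\Q_p$ together with the unique extension $\lvert\cdot\rvert_p$ of the $p$-adic absolute value, and set $U_p=\{x\in K_p:\lvert x\rvert_p=1\}$. Since $\lvert\cdot\rvert_p$ is multiplicative, $U_p$ is a subgroup of $K_p^\times$, and the restriction to $U_p$ of the metric $(x,y)\mapsto\lvert x-y\rvert_p$ is $U_p$-invariant, because $\lvert xz-yz\rvert_p=\lvert z\rvert_p\lvert x-y\rvert_p=\lvert x-y\rvert_p$ whenever $\lvert z\rvert_p=1$. I would then work in the abelian group $G=\prod_{p\in S}U_p$ equipped with the $G$-invariant metric $d_G((x_p)_p,(y_p)_p)=\max_{p\in S}\lvert x_p-y_p\rvert_p$; this is a finite product since $S$ is finite.

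Next, for each $p\in S$ let $\lambda_{p,1},\dots,\lambda_{p,r_p}$ be the roots of $f$ lying in $K_p$ with $\lvert\lambda_{p,i}\rvert_p=1$ (so $r_p=0$ if there are none), and let $g_{p,i}\in G$ be the element whose $p$-th coordinate is $\lambda_{p,i}$ and all of whose other coordinates are $1$. The hypothesis that $f$ has no roots of unity among its roots is used exactly here: it ensures each $\lambda_{p,i}$, and hence each $g_{p,i}$, has infinite order. Moreover $d_G(g_{p,i}^n,1)=\lvert\lambda_{p,i}^n-1\rvert_p$ for every $n\in\N$, by the choice of metric. Applying the preceding lemma to $G$, the finite family $\{g_{p,i}\}_{p,i}$ and the given $c$, I obtain a subset $\PP\subseteq c\N$ with bounded gaps such that $\inf_{p,i,\,n\in\PP}d_G(g_{p,i}^n,1)>0$; this says precisely that for every $p\in S$ the infimum of $\lvert\lambda^n-1\rvert_p$ over $n\in\PP$ and over roots $\lambda\in K_p$ of $f$ with $\lvert\lambda\rvert_p=1$ is positive, which is the definition of $\PP$ being bounded below for $f$ and $S$.

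Since the combinatorial heart of the argument is already contained in the preceding lemma, I do not expect a genuine obstacle. The only point requiring care is the set-up: choosing the ambient group and metric so that a single invocation of the lemma covers all $p\in S$ at once, and observing that translation-invariance of the metric, which fails on all of $K_p^\times$, survives the restriction to the unit-norm subgroups $U_p$. Verifying this invariance and the identity $d_G(g_{p,i}^n,1)=\lvert\lambda_{p,i}^n-1\rvert_p$ are the only computations involved, and both are immediate.
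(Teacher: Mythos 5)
Your proposal is correct and matches the paper's proof essentially verbatim: both form the product $G=\prod_{p\in S}\{x\in K_p:\lvert x\rvert_p=1\}$ with the max metric, embed the unimodular roots coordinatewise, and invoke the preceding lemma, with the no-roots-of-unity hypothesis supplying the infinite-order requirement. The extra verifications you flag (invariance of the metric on the unit groups and the identity $d_G(g_{p,i}^n,1)=\lvert\lambda_{p,i}^n-1\rvert_p$) are exactly the routine checks the paper leaves implicit.
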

\begin{proof}
    As in Definition \ref{def bounded below set}, for every $p\in S$ let $K_p$ be a splitting field of $f$ over $\Q_p$, with the absolute value $\lvert\;\rvert_{p}$.
    Denote by $\lambda_{p,1},\dots,\lambda_{p,r_p}\in K_p$ the roots of $f$ which belong to the unit group $G_p\coloneqq \{x\in K_p: \lvert x\rvert_{p} = 1\}$.
    Set $G\coloneqq\prod_{p\in S} G_p$ and define a $G$-invariant metric on $G$ by $d((x_p)_p,(y_p)_p) = \max_p \{\lvert x - y\rvert_{p}\}$. Applying the previous lemma to the group $G$ and the images of the elements $\{\lambda_{p,i}:p\in S, i=1,\dots,r_p\}$ under the canonical embeddings $G_p \hookrightarrow G$ yields the desired set.
\end{proof}
\subsection{Partial specification for abelian group dynamical systems}
We begin with a simple lemma about partial specification in general dynamical systems.
\begin{lemma}\label{lemma spec of factor}
    Let $(X,T)$ be a dynamical system and $(Y,S)$ a continuous factor. If $(X,T)$ satisfies partial specification with periods $\PP\subseteq \N$, then so does $(Y,S)$.
\end{lemma}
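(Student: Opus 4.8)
The plan is to transport the partial specification of $(X,T)$ through the factor map $\varphi:X\to Y$. First I would fix a metric $d_Y$ on $Y$ and a metric $d_X$ on $X$; by uniform continuity of $\varphi$, for every $\varepsilon>0$ there is $\varepsilon'>0$ such that $d_X(x,x')<\varepsilon'$ implies $d_Y(\varphi x,\varphi x')<\varepsilon$. Now given $\varepsilon>0$, apply the partial specification of $(X,T)$ to $\varepsilon'$ (shrinking $\varepsilon'$ further if necessary so that $\varepsilon'<\varepsilon$, which we may freely do) to obtain $N,M\in\N$.

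Next, given an $M$-spaced specification $\eta=\{(y_i;a_i,b_i)\}_{i=1}^r$ in $Y$ and $n\in\PP$ with $n\ge\max\{N,(1+\varepsilon')b_r\}\ge\max\{N,(1+\varepsilon)b_r\}$ — here I would use the $\varepsilon'<\varepsilon$ reduction so the constraint on $n$ reads in terms of $\varepsilon$ as required — I would lift each $y_i$ to some $x_i\in X$ with $\varphi(x_i)=y_i$ (possible since $\varphi$ is surjective), forming an $M$-spaced specification $\xi=\{(x_i;a_i,b_i)\}$ in $X$. Partial specification in $(X,T)$ produces $x\in X$ of period $n$ (so $T^n x=x$) that $\varepsilon'$-partially traces $\xi$. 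Set $y=\varphi(x)$; then $S^n y = S^n\varphi(x)=\varphi(T^n x)=\varphi(x)=y$, so $y$ has period $n$ in $Y$.

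It remains to check the tracing. For each $i$ and each tracing index $n\in\Lambda_i=\{a_i\le m<b_i:d_X(T^m x_i,T^m x)<\varepsilon'\}$, the intertwining relation $\varphi\circ T=S\circ\varphi$ gives $S^m y=\varphi(T^m x)$ and $y_i'$'s orbit point $S^m y_i=\varphi(T^m x_i)$, so $d_Y(S^m y_i, S^m y)=d_Y(\varphi(T^m x_i),\varphi(T^m x))<\varepsilon$ by the choice of $\varepsilon'$. Hence the set of tracing indices for $\eta$ in $Y$ contains $\Lambda_i$, and since $|\Lambda_i|/(b_i-a_i)>1-\varepsilon'>1-\varepsilon$, the point $y$ $\varepsilon$-partially traces $\eta$. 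This establishes partial specification for $(Y,S)$ with the same periods $\PP$.

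This argument is entirely routine; there is no real obstacle. The only minor bookkeeping point is to arrange $\varepsilon'<\varepsilon$ so that the hypothesis $n\ge(1+\varepsilon')b_r$ obtained from applying $(X,T)$'s specification is implied by $n\ge(1+\varepsilon)b_r$, and to note (as in Remark \ref{remarks partial spec}(\ref{remark independent of metric})) that the choice of metrics on $X$ and $Y$ is immaterial. Everything else follows immediately from surjectivity of $\varphi$, the semiconjugacy identity, and uniform continuity.
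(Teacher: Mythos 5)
Your argument is correct and is essentially identical to the paper's proof: fix metrics, use uniform continuity of the factor map to choose a smaller tracing tolerance $\varepsilon'<\varepsilon$, lift the specification via surjectivity, apply partial specification upstairs, and push the periodic tracing point back down through the semiconjugacy. The only cosmetic issue is the momentary reuse of the letter $n$ both for the period and for the tracing index before you switch to $m$; otherwise nothing to add.
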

\begin{proof}
    Fix metrics $d_X$ and $d_Y$ on $X$ and $Y$, and let $\phi: X\to Y$  be the continuous factor map. Let $\varepsilon>0$ and choose $0<\delta <\varepsilon$ such that for every $x_1,x_2\in X$,
    \[d_X(x_1,x_2)<\delta \Longrightarrow d_Y(\phi(x), \phi(y))<\varepsilon.\]
    Let $N,M\in \N$ be the parameters provided by the partial specification for $X$, corresponding to $\delta$ (Definition \ref{def partial specification}).
    Given an $M$-spaced specification $\xi=\{(y_i;a_i,b_i)\}_{i=1}^r$ in $Y$, we can choose preimages $x_i\in \phi^{-1}(y_i)$ and lift $\xi$ to an $M$-spaced specification $\xi'=\{(x_i;a_i,b_i)\}_{i=1}^r$ in $X$. Thus, if $\PP\ni n\ge \max\{N, (1+\varepsilon)b_r\}$ we can find $x\in X$ of period $n$ which $\delta$-partially traces $\xi'$, and $\phi(x)$ will $\varepsilon$-partially trace $\xi$.
\end{proof}
We are ready to prove Theorem \ref{thm spec intro}.
\begin{proof}[Proof of Theorem \ref{thm spec intro}]
    Pick an ergodic abelian group dynamical system $(X',T')$, of which $(X,T)$ is a continuous factor, and $T'$-invariant closed subgroups $0=X_n\le\cdots\le X_0 = X'$ as described in Proposition \ref{prop ergodic structure}.
    For every quotient $(X_k/X_{k+1}, T')$ not isomorphic to a Bernoulli shift, apply case \eqref{prop ergodic structure, solenoid} of Proposition \ref{prop ergodic structure} to select suitable $S_k\subseteq \N\cup\{\infty\}$ and $A_k \in \GL_{d_k}(\Q)$ such that $(X_k/X_{k+1}, T')$ is a continuous factor of the $S_k$-adic solenoid $(X_{S_k}^{d_k}, A_k)$. Let $f_k$ denote the characteristic polynomial of $A_k$, and set $S=\bigcup_k S_k$ and $f=\prod_k f_k$.
    
    Fix some $c\in \N$, and apply Corollary \ref{cor bounded below set} to find $\PP\subseteq c\N$ which is bounded below for $f$ and $S$. Clearly, $\PP$ is also bounded below for every pair $f_k,S_k$, and hence by Proposition \ref{prop solenoid spec} together with Lemma \ref{lemma spec of factor}, $(X_k/X_{k+1}, T')$ satisfies partial specification with periods $\PP$.
    By Example \ref{example bernoulli has spec}, any quotient $(X_j/X_{j+1}, T')$ that is isomorphic to a Bernoulli shift also satisfies partial specification with periods $\PP$. Proposition \ref{prop extension has partial spec} then implies that $(X',T')$ satisfies partial specification with periods $\PP$, and a further application of Lemma \ref{lemma spec of factor} shows that $(X,T)$ does as well. Since $c$ was arbitrary, $(X,T)$ satisfies partial specification.
\end{proof}
Recall that for a dynamical system $(X,T)$, we write $\M(X)$ for the space of Borel probability measures on $X$, endowed with the weak-* topology, and $\M_T(X)$ for the closed subspace of $T$-invariant measures. We denote by $\delta_x$ the Dirac measure at $x\in X$, and for $n\in \N$ we denote $\mathfrak{m}_{T,x,n} = \frac{1}{n}\sum_{i=0}^{n-1} \delta_{T^i x}$.

Recall in addition that for $\mu\in \M_T(X)$, $x\in X$ is called \emph{$\mu$-generic} if $\mathfrak{m}_{T,x,n}\underset{n\to\infty}{\longrightarrow}\mu$, and that if $\mu$ is ergodic, then $\mu$-a.e.\ $x\in X$ is $\mu$-generic as a consequence of Birkhoff's ergodic theorem.

We now restate the ergodic part of Theorem \ref{thm dpm intro}.
\begin{theorem}\label{thm dense ergodic measures}
    Let $(X,T)$ be an ergodic abelian group dynamical system satisfying the dcc. Then the ergodic finitely supported measures in $\M_T(X)$ are dense in $\M_T(X)$.
\end{theorem}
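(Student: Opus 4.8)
The plan is to deduce this from the partial specification property (Theorem \ref{thm spec intro}) together with a standard ``generic point'' argument converting specification into dense periodic measures. First I would fix an arbitrary $\mu\in\M_T(X)$ and an arbitrary weak-* neighbourhood of $\mu$; since $\M(X)$ is metrizable and $\M_T(X)$ is the closed convex hull of its ergodic measures, by an easy approximation argument (choosing finitely many ergodic measures with a rational convex combination close to $\mu$) it suffices to approximate a single ergodic $\nu\in\M_T(X)$ by ergodic finitely supported measures. Here the abelian group structure is genuinely helpful: the convex combination $\sum t_j\nu_j$ with $\nu_j$ ergodic finitely supported of periods $n_j$ can be realized, up to a small perturbation, as a \emph{single} ergodic periodic measure by ``interleaving'' the corresponding periodic orbits via partial specification, which is exactly the feature that makes partial specification behave well here (as opposed to merely getting a non-ergodic average). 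So the real content is: approximate one ergodic $\nu$.

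Next, fix $\varepsilon>0$ and a finite set of continuous functions $g_1,\dots,g_m\in C(X)$ determining the neighbourhood. Pick a $\nu$-generic point $x\in X$ (Birkhoff), so that for $L$ large, $\mathfrak{m}_{T,x,L}$ is within $\varepsilon/2$ of $\nu$ when tested against $g_1,\dots,g_m$; fix such an $L$ and also choose $L$ large enough that the $g_i$ are uniformly close on dynamical balls of the relevant scale. Now invoke partial specification: for the single orbit segment $\{(x;0,L)\}$ (a trivially $M$-spaced specification with $r=1$), and for any prescribed $c\in\N$, there is $\PP\subseteq c\N$ and, for all large $n\in\PP$ with $n\ge(1+\varepsilon')L$, a point $y\in X$ of period exactly dividing $n$ that $\varepsilon'$-partially traces the segment. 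Then $\mathfrak{m}_{T,y,n}$ consists of: the genuinely periodic orbit of $y$; the tracing indices $n\in\Lambda$ (all but an $\varepsilon'$-fraction of $[0,L)$) contribute values of $g_i$ close to those along $x$'s orbit; the non-tracing indices in $[0,L)$ and the ``long tail'' $[L,n)$ contribute at most a $\big((\varepsilon' L + (n-L))/n\big)$-fraction. The issue is that the tail $[L,n)$ is a \emph{large} fraction of $[0,n)$, so $\mathfrak{m}_{T,y,n}$ is not close to $\nu$ — one must instead use a specification with \emph{many} copies of the segment $(x;0,L)$, say $r$ copies at positions $a_i = i(L+M)$, filling up almost all of $[0,n)$, so that the portion of the orbit governed by tracing indices has density $\ge 1-O(\varepsilon')$. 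Then $\mathfrak{m}_{T,y,n}$ is within $\varepsilon$ of $\mathfrak{m}_{T,x,L}$, hence within $\varepsilon$ of $\nu$, against the test functions.

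It remains to upgrade $y$'s periodic measure to an \emph{ergodic} finitely supported measure: $\mathfrak{m}_{T,y,n}$ is automatically a finitely supported $T$-invariant measure, but it need not be ergodic (it is ergodic iff the least period of $y$ equals $n$, i.e. the orbit closure is a single $n$-cycle). However, $\mathfrak{m}_{T,y,n}$ is the uniform average over the orbit $\{y, Ty, \dots, T^{n-1}y\}$, which decomposes into the (fewer than $n$ distinct) points of a single periodic orbit with some least period $n' \mid n$, and $\mathfrak{m}_{T,y,n}=\mathfrak{m}_{T,y,n'}$ \emph{is} ergodic (it is the orbit measure of a point with least period $n'$, which is always ergodic). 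So in fact every $\mathfrak{m}_{T,y,n}$ arising this way is already ergodic — a uniform measure on a single finite orbit is automatically ergodic. Thus no upgrading is needed, and the proof is complete once the counting estimate in the previous paragraph is made precise. I expect the main obstacle to be purely bookkeeping: choosing the number $r$ of repeated segments, the spacing $M$, and the period $n\in\PP$ so that simultaneously (i) $n$ lies in the prescribed $\PP$ and exceeds the required threshold, (ii) the tail and the inter-segment gaps together occupy less than an $\varepsilon$-fraction of $[0,n)$, and (iii) the uniform-continuity scale matches the tracing scale $\varepsilon'$. None of this is deep, but it must be arranged in the right order. I would also remark that this argument simultaneously reproves the first (non-ergodic) part of Theorem \ref{thm dpm intro} for ergodic systems, and that the genuinely non-ergodic case of Theorem \ref{thm dpm intro} is handled separately in Section \ref{section dpm} using Proposition \ref{prop nonergodic structure}, where partial specification is not available.
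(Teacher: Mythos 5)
Your proposal is correct and follows essentially the same route as the paper, which itself only sketches this standard specification-to-dense-ergodic-periodic-measures argument (in the style of Marcus and Sigmund): approximate $\mu$ by a convex combination of ergodic measures, take Birkhoff-generic orbit segments with lengths proportional to the weights, apply partial specification to obtain a single periodic tracing point, and observe that the uniform measure on one periodic orbit is automatically ergodic. Your bookkeeping points (repeating/interleaving segments so that gaps, non-traced indices and the tail $[b_r,n)$ occupy only an $O(\varepsilon)$-fraction, using that $\PP$ has bounded gaps) are exactly the technical details the paper omits, and your self-correction about ergodicity of $\m_{T,y,n}$ lands on the right conclusion.
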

\begin{proof}
    Many periodic specification-like properties are known to imply this assertion, with the details of the proof adapted to the specific situation; see e.g.\ \cite{marcus, sigmund} and \cite[Theorem 1.1(1)]{on_density}. Our case is no different, and the argument proceeds in essentially the same way. For this reason we only sketch the proof and omit the technical details.
    
    Fix a metric $d$ on $\M_T(X)$. Let $\mu\in \M_T(X)$ and $\varepsilon>0$, then there exist ergodic measures $\mu_1,\dots,\mu_k\in \M_T(X)$ and a convex combination $\sum_{i=1}^k t_i\mu_i$ such that $d(\mu, \sum t_i\mu_i)<\varepsilon$. For $i=1,\dots, k$, let $x_i\in X$ be a $\mu_i$-generic point. Pick $N\in \N$ such that for every $i$ and $n\ge N$, $d(\mu_i,\m_{T,x_i,n})<\varepsilon$.
    Consider the orbit segments $x_i,\dots,T^{n_i}x_i$, where $n_i\ge N$ and the ratio ${n_i}/(n_1+\cdots+n_k)$ approximates its weight $t_i$.
    Using these segments, we construct a suitably spaced specification, and apply the partial specification of $(X,T)$ to obtain a periodic point $x\in X$ whose orbit approximates them, except on a negligible set. The $T$-invariant measure supported on the finite orbit of $x$ then approximates $\mu$.
\end{proof}
\section{Unipotent toral automorphisms}\label{section unipotent}
In this section we study the ergodic measures for a unipotent toral automorphism, i.e., a toral automorphism induced by a unipotent matrix.
We then use this analysis to show that such automorphisms admit a strong form of dense periodic measures.
This, in turn, allows us to deduce a property analogous to partial specification.

\subsection{Ergodic measures for unipotent toral automorphisms}
We denote the support of a measure $\mu$ by $\supp(\mu)$. Recall that if $(X,T)$ is a dynamical system and $\mu\in \M_T(X)$ then $\supp(\mu)$ is a $T$-invariant closed subset of full measure.
As is common in unipotent homogeneous dynamics---unlike in the previous sections---in this case, the invariant measures are fully classified.
\begin{proposition}\label{prop measure classification}
    Let $d \in \N$ and let $U \in \GL_d(\Z)$ be a unipotent matrix. Let $\mu \in \M_{U}(\T^d)$ be an ergodic measure.
    Then there exist $m\in\N$, $a\in \T^d$ and a closed connected proper subgroup $H\le\T^d$ such that
    $\supp(\mu)=\bigcup_{i=0}^{m-1} U^i a + H$,
    and each $\restr{\mu}{U^i a +H}$ is a translate of the (appropriately normalized) Haar measure on $H$.
\end{proposition}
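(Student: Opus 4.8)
The plan is to reduce the classification of ergodic $U$-invariant measures on $\T^d$ to Ratner's measure classification theorem (or, in this unipotent setting, to the earlier results of Dani and Margulis on measures invariant under unipotent flows on homogeneous spaces). The key observation is that although $U$ generates only a $\Z$-action, the measure rigidity phenomenon for unipotents forces any ergodic invariant measure to have a large group of symmetries, which one then identifies with the subgroup $H$ in the statement. First I would pass from the discrete iteration to a flow: since $U$ is unipotent, write $U = \exp(\mathcal N)$ with $\mathcal N$ a nilpotent integer matrix, and consider the one-parameter unipotent subgroup $u_t = \exp(t\mathcal N)$ of $\GL_d(\R)$ acting on $\T^d = \R^d/\Z^d$. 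The orbit closure and invariant-measure theory for the $\Z$-action of $U$ is governed by that of the $\R$-action of $(u_t)$ via the standard suspension/interpolation argument (one must be slightly careful: an ergodic $U$-invariant measure need not be $(u_t)$-invariant, but its ergodic decomposition under $(u_t)$ consists of at most countably many pieces permuted by $U$, and ergodicity of $\mu$ under $U$ forces a single $(u_t)$-ergodic component up to a finite cycle — this is where the integer $m$ and the points $U^i a$ enter).

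The main steps, in order, would be: (1) Reduce to the case where $\mu$ is $(u_t)$-invariant and $(u_t)$-ergodic by replacing $\mu$ with a $(u_t)$-ergodic component $\mu_0$; the $U$-orbit of $\mu_0$ is finite, say of size $m$, and $\mu = \frac1m\sum_{i=0}^{m-1}(U^i)_*\mu_0$. (2) Apply the Ratner/Dani–Margulis classification for unipotent flows on the homogeneous space $\T^d = \R^d/\Z^d$ (here the acting group is abelian, so this is the elementary abelian case, essentially Weyl equidistribution / the structure of orbit closures of unipotent translations on tori): $\mu_0$ is the Haar measure on a single orbit $a + H$ of a closed connected subgroup $H \le \T^d$ containing the image of $(u_t)$, and moreover $a + H$ is $u_t$-invariant. (3) Check that $H$ must be a \emph{proper} subgroup: if $H = \T^d$ then $\mu_0$ is the full Haar measure on $\T^d$, which is $U$-invariant, so $m = 1$ and $\mu = \mu_0$ is Haar — and this is consistent with the statement (take $H = \T^d$... but the statement demands $H$ proper, so one argues that the Haar measure on $\T^d$ arises only when $U$ acts ergodically on $\T^d$, which is impossible for a nontrivial unipotent $U$ since it always has rational invariant subtori; if $U = \Id$ the measures are translates of Haar on subtori and the claim is immediate). (4) Finally, translate back: $\supp(\mu) = \bigcup_{i=0}^{m-1} U^i a + U^i H$, and since $H$ is $U$-invariant (being $(u_t)$-invariant and $U = u_1$) this is $\bigcup_{i=0}^{m-1} U^i a + H$, with each piece carrying a translate of Haar measure on $H$, as claimed.

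An alternative, more self-contained route avoids citing Ratner entirely and uses Fourier analysis directly, which may be cleaner given that the ambient space is just a torus. Decompose $\mu$ via its Fourier coefficients $\widehat\mu(\chi)$ for $\chi \in \widehat{\T^d} = \Z^d$; $U$-invariance means $\widehat\mu(U^\top \chi) = \widehat\mu(\chi)$. Since $U^\top$ is unipotent, every $\chi \ne 0$ either has infinite $U^\top$-orbit — forcing $\widehat\mu(\chi) = 0$ by the Riemann–Lebesgue-type decay along the orbit, using that the $U$-invariant measure $\mu$ is also, say, a weak-* limit related to equidistribution — or lies in the subspace fixed by some power of $U^\top$. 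The set $\Lambda$ of characters with $\widehat\mu(\chi)\ne 0$ is then a union of finite $U^\top$-orbits inside $\ker((U^\top)^{N}-\Id)$ for suitable $N$; its annihilator, intersected with a suitable finite-index consideration, produces $H$, and ergodicity pins down the orbit structure giving $m$ and $a$. The hard part will be making the "infinite orbit $\Rightarrow$ vanishing Fourier coefficient" step rigorous: for a general invariant measure this requires an equidistribution/mixing input, and the honest way to get it for unipotent actions is precisely the van der Corput / Weyl differencing argument that underlies the homogeneous dynamics statements — so this route is not genuinely shorter, just repackaged. I expect the cleanest exposition to invoke the measure classification for unipotent actions on tori as a black box (it is classical and predates Ratner in this abelian case) and spend the real effort on Step (1), the descent from the $U$-action to the $(u_t)$-flow and the bookkeeping of the finite cycle that produces $m$.
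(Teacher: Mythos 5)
There is a genuine gap, and it sits exactly at the step you defer to the end: unipotent measure rigidity does not apply to the action of $U$ on $\T^d$ as you have set it up. Ratner, Dani--Margulis and Shah classify measures invariant under \emph{translations} by unipotent elements of $G$ on a homogeneous space $G/\Gamma$, whereas $U$ acts on $\T^d=\R^d/\Z^d$ by an \emph{automorphism}. Viewing $\T^d$ as a homogeneous space of the abelian group $\R^d$, as you propose in step (2), the unipotent translations are just ordinary translations, and their invariant measures have nothing to do with $\M_U(\T^d)$. Your interpolation in step (1) also fails at the outset: $u_t=\exp(t\mathcal{N})$ does not preserve $\Z^d$ for non-integer $t$, so the one-parameter group does not act on $\T^d$ at all, and there is no ``$(u_t)$-ergodic component of $\mu$'' to extract. (A correctly formulated suspension would live on the nilmanifold $(\R\ltimes_U\R^d)/(\Z\ltimes_U\Z^d)$, not on $\T^d$.) The missing idea, which is the heart of the paper's proof, is to embed $(\T^d,U)$ equivariantly into a genuine translation action: take $G=\SL_d(\R)\ltimes\R^d$, $\Gamma=\SL_d(\Z)\ltimes\Z^d$, and $\phi(x+\Z^d)=(1,x)\Gamma$, under which the automorphism $U$ becomes left translation by the unipotent element $(U,0)\in G$. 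One then applies Shah's theorem, which handles measures ergodic under the \emph{cyclic} group generated by a unipotent element---so no interpolation to a flow is needed---to conclude that $\phi_*\mu$ is homogeneous for some closed $S\le G$, and a separate counting argument (the paper's Lemma \ref{lemma finitely many orbits}, using that $S$ is contained in $\SL_d(\Z)\ltimes\R^d$ and hence $S/(S\cap\R^d)$ embeds in the countable group $\SL_d(\Z)$) to cut the single $S$-orbit down to finitely many cosets of a subgroup of $\R^d$; this is where $m$, $a$ and $H$ come from, not from a cycle of flow-ergodic components.

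Your Fourier-analytic alternative is broken rather than merely incomplete: the step ``$\chi$ has infinite $U^\top$-orbit $\Rightarrow\widehat\mu(\chi)=0$'' is false for general invariant measures. The Dirac mass at $0$ is $U$-invariant and ergodic with $\widehat\mu\equiv1$, so every character, including those with infinite $U^\top$-orbit, has nonvanishing coefficient; the conclusion of the proposition still holds for this measure (with $H=0$), but your intermediate claim does not, so no van der Corput input can rescue it as stated. Your step (3), that $H$ must be proper because Haar measure on $\T^d$ is never ergodic for a unipotent automorphism, is correct and is exactly how the paper concludes.
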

The key idea in the following proof is due to Levit and Vigdorovich.
\begin{proof}
    Consider the group $G = \SL_d(\R)\ltimes \R^d$ (with the standard action of $\SL_d(\R)$ on $\R^d$) and its lattice $\Gamma = \SL_d(\Z)\ltimes \Z^d$. We adopt the convention that for $(A,x),(B,y)\in G$, multiplication is given by $(A,x)(B,y)=(AB,x+Ay)$, and denote the identity matrix by $1$.
    The map $\phi: \T^d \to G/\Gamma$ defined for $x\in \R^d$ by $\phi(x+\Z^d) = (1,x)\Gamma$ is a homeomorphism on its image
    \[\phi(\T^d) = \{(1, x)\Gamma :x \in \R^d\} \subseteq G/\Gamma.\]
    Moreover, for $A\in \SL_d(\Z)$ and $x\in \R^d$ we have
    \[(A,0)\phi(x+\Z^d) = (A, Ax)\Gamma = (1, Ax)(A,0)\Gamma = \phi(Ax+\Z^d),\]
    so $\phi$ is $\SL_d(\Z)$-equivariant. In particular, given an ergodic measure $\mu\in \M_{U}(\T^d)$ we obtain an ergodic $U$-invariant measure $\nu\coloneqq\phi_*\mu$ on $\phi(\T^d)$. By Shah's theorem \cite[Theorem 1.1]{shah}, which generalizes Ratner's classical measure classification theorem, there exists a subgroup $S< G$ and $x\in \R^d$ such that $\nu$ is the $S$-invariant measure supported on the orbit $S(1,x)$.
    Identifying $\R^d$ with its image in $G$, we define
    \[ \widetilde{H} = \{g\in \R^d : g_*\nu = \nu\}.\]
    $\R^d$ is a normal subgroup of $G$, and it follows that $S$ normalizes $\widetilde{H}$.
    Replace $S$ with the subgroup of $G$ generated by $S$ and $\widetilde{H}$; then $\widetilde{H} = S\cap \R^d$ is a normal subgroup of $S$.
    Next, notice that for every $y\in \R^d$ and \textit{non}-integer matrix $A \in \SL_d(\R)$, $(A,y)\phi(\T^d)$ is disjoint from $\phi(\T^d)$, and hence $S \subseteq \SL_d(\Z)\ltimes \R^d$. Consequently, $S/\widetilde{H}$ can be identified with a subgroup of $\SL_d(\Z)$.
    
    We are now in position to apply Lemma \ref{lemma finitely many orbits}, stated below,
    which asserts that $\nu$ is supported on finitely many $\widetilde{H}$-orbits. Thus, applying $\phi^{-1}$ shows that the original measure $\mu$ on $\T^d$ is supported on finitely many cosets of $H'$, 
    where $H'$ is the closure of the projection of $\widetilde{H}$ to $\T^d$. In addition, the $\widetilde{H}$-invariance of $\nu$ implies that the restriction of $\mu$ to each of those cosets is $H'$-invariant, and therefore must be a translate of the Haar measure on $H'$.
    Finally, we define $H$ to be the identity connected component of $H'$. $\mu$ remains a translate of Haar measure on each of the resulting finitely many cosets of $H$.
    That the cosets can be written as $a+H,\dots,U^{m-1} a+H$ follows from the fact that $U$ acts transitively on them, as $\mu$ is ergodic.
    Finally, notice that $H$ must be a proper subgroup of $\T^d$ since the Haar measure on $\T^d$ is not $U$-ergodic.
\end{proof}
In the proof of the preceding proposition, we made use of the following lemma.
\begin{lemma}\label{lemma finitely many orbits}
    Let $S$ be a group acting measurably and transitively on a Borel space $X$.
    Let $\nu$ be an $S$-invariant probability measure on $X$, and suppose that $H$ is a normal subgroup of $S$ such that $S/H$ is countable and such that the $H$-orbits in $X$ are measurable. Then $\nu$ is supported on finitely many $H$-orbits.
\end{lemma}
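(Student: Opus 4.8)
The plan is to prove something slightly stronger, namely that $X$ itself decomposes into only \emph{finitely many} $H$-orbits, which immediately yields the claim. First I would fix a base point $x_0\in X$. Since $H$ is normal in $S$, for every $s\in S$ we have $s(Hx_0)=(sH)x_0=(Hs)x_0=H(sx_0)$, so $S$ permutes the $H$-orbits, and the resulting action $s\cdot(Hx):=H(sx)$ on the set $\mathcal{O}$ of all $H$-orbits factors through $S/H$. Because $S$ acts transitively on $X$, every $H$-orbit is of the form $sHx_0$ for some $s\in S$; hence $\mathcal{O}$ is a quotient of the countable group $S/H$ and is therefore countable, and moreover $S/H$ acts \emph{transitively} on $\mathcal{O}$.

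Next I would use the invariance of $\nu$. Since $\nu$ is $S$-invariant, $\nu(E)=\nu(sE)$ for every measurable set $E$ and every $s\in S$; applying this to the $H$-orbits, which are measurable by hypothesis, and using that $S/H$ acts transitively on $\mathcal{O}$, we conclude that all $H$-orbits have one and the same $\nu$-measure, say $c\ge 0$. Finally, the $H$-orbits are pairwise disjoint, measurable, cover $X$, and are countably many, so countable additivity gives
\[
1=\nu(X)=\sum_{O\in\mathcal{O}}\nu(O)=\sum_{O\in\mathcal{O}}c .
\]
A sum of the constant $c$ over the countable index set $\mathcal{O}$ can equal $1$ only when $\mathcal{O}$ is finite (and then $c=1/\lvert\mathcal{O}\rvert>0$). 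Thus $X$ is a union of finitely many $H$-orbits, and $\nu$ is supported on all of them.

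The only step that requires genuine care is the countability of $\mathcal{O}$: this is precisely where normality of $H$ is essential, since it is what allows $S$ to act on the set of $H$-orbits in the first place, and combined with transitivity of the $S$-action it forces $\mathcal{O}$ to be a single $S/H$-orbit, hence countable. The remaining steps are then routine, amounting to the observation that a countable set of equal positive masses summing to $1$ must be finite.
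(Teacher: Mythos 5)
Your proof is correct and follows essentially the same route as the paper's: normality of $H$ gives an $S$-action (factoring through $S/H$) on the orbit space, transitivity forces all $H$-orbits to have equal measure, countability of $S/H$ plus transitivity gives countably many orbits, and countable additivity finishes the argument. The only cosmetic difference is that you state explicitly the slightly stronger conclusion that $X$ itself is a finite union of $H$-orbits, which the paper's argument also yields implicitly.
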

\begin{proof}
    $S$ acts on the orbit space $H\setminus X$: for $s\in S$ and an orbit $Hx$ we have $sH.x=Hs.x$. The transitivity of the $S$-action on $X$ implies that this action is transitive as well. Therefore, by the $S$-invariance of $\nu$, all $H$-orbits must have the same measure.
    Since $S/H$ is countable, we can choose $s_1,s_2,\ldots \in S$ and $x\in X$ such that $X = S.x = \bigsqcup_n Hs_n.x$, where the union is either finite or countably infinite. Then
    \[1 = \sum_n \nu(Hs_n.x) = \sum_n \nu(Hs_1.x).\]
    This equation is valid only when the sum contains finitely many positive terms, from which the claim follows.
\end{proof}
While a unipotent automorphism of $\T^d$ is not uniquely ergodic, its restriction to certain subsets can be.
\begin{proposition}\label{prop unique ergodicity}
    Let $U\in \GL_d(\Z)$ be a unipotent matrix and $\mu\in \M_{U}(\T^d)$ an ergodic measure, whose support consists of $m\in \N$ connected components.
    Then for every $n\in\N$ coprime to $m$, the dynamical system
    $(\supp(\mu), \restr{U^n}{\supp(\mu)})$ is uniquely ergodic, with $\mu$ being the unique ergodic measure.
    % for example, we can take U = J_3(1)^t jordan block and take the measure supported on (1/2, x, y) where x = alpha, alpha+0.5. It is ergodic but not totally ergodic
\end{proposition}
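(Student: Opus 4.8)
The plan is to combine the measure classification in Proposition~\ref{prop measure classification} with a first-return argument that reduces the statement to the unique ergodicity of a single affine unipotent transformation of a torus, and then to invoke the classical structure theory of such transformations. To begin, unwind the geometry: by Proposition~\ref{prop measure classification} we may write $\supp(\mu)=\bigcup_{i=0}^{m-1}Y_i$ with $Y_i=U^i a+H$, and since $H$ is connected these are exactly the $m$ connected components of $\supp(\mu)$. As $U$ is a homeomorphism it permutes them, and $U(U^i a+H)=U^{i+1}a+UH$ being again a component forces $UH=H$ and $U^m a-a\in H$; hence $U$ acts on the index set $\Z/m\Z$ by $i\mapsto i+1$, so $U^n$ acts by $i\mapsto i+n$, and the hypothesis $\gcd(n,m)=1$ guarantees that this is again a single $m$-cycle.

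Next, carry out the reduction. The projection $\supp(\mu)\to\supp(\mu)/H\cong\Z/m\Z$ is a continuous factor map conjugating $U^n$ to $i\mapsto i+n$; since a transitive cyclic action on a finite set is uniquely ergodic, every $U^n$-invariant probability measure $\nu$ on $\supp(\mu)$ assigns mass $1/m$ to each $Y_i$. Writing $\nu_0=m\cdot\nu|_{Y_0}$, the components $Y_0,Y_n,Y_{2n},\dots$ (indices mod $m$) exhaust all of them, so $\nu$ is completely determined by $\nu_0$, which in turn is invariant under the first-return map of $U^n$ to $Y_0$, namely $(U^n)^m|_{Y_0}=U^{nm}|_{Y_0}$; conversely, averaging any $U^{nm}|_{Y_0}$-invariant probability measure over $Y_0,Y_n,\dots,Y_{(m-1)n}$ produces a $U^n$-invariant probability measure on $\supp(\mu)$. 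This is a bijection $\M_{U^n}(\supp(\mu))\leftrightarrow\M_{U^{nm}|_{Y_0}}(Y_0)$ sending $\mu$ to $\mu_0:=m\cdot\mu|_{Y_0}$, so it suffices to prove that $(Y_0,U^{nm}|_{Y_0})$ is uniquely ergodic.

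Finally, identify this return system. Translation by $-a$ conjugates $(Y_0,U^m|_{Y_0})$ to $(H,R_0)$ where $R_0(h)=(U^m|_H)h+c$, with $U^m|_H$ a unipotent automorphism of the torus $H$ and $c=U^m a-a\in H$; hence $(Y_0,U^{nm}|_{Y_0})$ is conjugate to $(H,R_0^{\,n})$. By Proposition~\ref{prop measure classification} the measure $\mu_0$ corresponds to Haar measure on $H$, and because $U$ cyclically permutes $Y_0,\dots,Y_{m-1}$ with equal $\mu$-mass, the $U$-ergodicity of $\mu$ forces $R_0$ to be ergodic with respect to Haar. One then invokes the classical fact that an ergodic affine transformation of a compact connected metrizable abelian group is uniquely ergodic, and in fact totally ergodic---connectedness makes the dual group torsion-free, which rules out roots of unity among the (quasi-)eigenvalues---so that $R_0^{\,n}$ is again ergodic and hence uniquely ergodic, necessarily with Haar its unique invariant measure. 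Translating back and applying the bijection above shows that $\mu$ is the unique $U^n$-invariant probability measure on $\supp(\mu)$.

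The crux is this last appeal to the structure theory of affine transformations---both that ergodicity implies unique ergodicity and that ergodicity passes to all powers over connected groups---which one must locate in the literature (e.g.\ in the work of Hahn and of Hoare--Parry on quasi-discrete spectrum) or else reprove. It is also worth noting that the hypothesis $\gcd(n,m)=1$ is sharp: when $\gcd(n,m)>1$ the map $U^n$ permutes the components in more than one cycle, so $\supp(\mu)$ splits into several clopen $U^n$-invariant pieces and unique ergodicity fails.
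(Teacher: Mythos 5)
Your argument is correct in substance, and it differs from the paper's proof in one essential step. Both arguments rest on the same ingredients: Proposition \ref{prop measure classification}, the fact that $U$ permutes the cosets $U^ia+H$ cyclically (so $UH=H$ and $U^ma-a\in H$), the first-return map $U^{m}$ on a single component $Y_0=a+H$, its conjugation by translation to an affine map $R_0$ on $H$ with unipotent linear part, ergodicity of the induced map, and Hahn's unique ergodicity theorem for ergodic affine maps with unipotent linear part. The difference is how the exponent $n$ is handled. The paper first proves that $\mu$ itself is $U^n$-ergodic, by taking the ergodic decomposition of $\mu$ under $U^n$ and applying the measure classification to a $U^n$-ergodic component to force equality of supports and hence of measures; it then runs the connected-case/induced-map argument directly for $U^n$. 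You instead set up the (correct) bijection $\M_{U^n}(\supp(\mu))\leftrightarrow\M_{U^{nm}|_{Y_0}}(Y_0)$, reduce to $(H,R_0^{\,n})$, and use \emph{total} ergodicity of the ergodic affine map $R_0$ so that $R_0^{\,n}$ is again ergodic and hence, by Hahn, uniquely ergodic. Your route buys the elimination of the ergodic-decomposition step (and recovers the $U^n$-ergodicity of $\mu$ as a byproduct), at the cost of the extra classical input of total ergodicity, which you rightly identify as the crux; that input is true here and can be proved by exactly the mechanism you gesture at (an eigenvalue of the Koopman operator of an ergodic affine map with unipotent linear part has the form $\chi(c)$ for a character $\chi$ fixed by the linear part, and torsion-freeness of $\widehat{H}$ for connected $H$ rules out nontrivial roots of unity), or extracted from Hahn/Hoare--Parry.

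One correction is needed in your statement of the ``classical fact'': it is false that every ergodic affine transformation of a compact connected metrizable abelian group is uniquely ergodic --- a hyperbolic automorphism of $\T^2$ is an ergodic affine transformation (with $c=0$) and is very far from uniquely ergodic. The unipotent (quasi-discrete spectrum) hypothesis is essential, and it is exactly what \cite{hahn} supplies and what your setup provides, since the linear part of $R_0^{\,n}$ is $U^{nm}|_H$, which is unipotent on the subtorus $H$. With the fact stated in that corrected form (and the total ergodicity step either cited or proved as above), your proof goes through.
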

\begin{proof}
    By Proposition \ref{prop measure classification}, $\supp(\mu)=  \bigcup_{i=0}^{m-1}U^i a + H$ for some $a\in \T^d$ and a closed connected subgroup $H\le \T^d$.
    We begin by fixing $n\in\N$ coprime to $m$ and showing that $\mu$ is $U^n$-ergodic.
    The ergodic decomposition $\mu=\int \nu\; d\lambda(\nu)$ with respect to $U^n$ yields a representation of $\mu$ as an average of $U$-invariant measures $\mu = \int \frac{1}{n}\sum_{i=0}^{n-1}T_*^i\nu\; d\lambda(\nu)$.
    But since $\mu$ is $U$-ergodic, there exists a $U^n$-ergodic measure $\nu\in \M_{U^n}(\T^d)$ such that $\mu=\frac{1}{n}\sum_{i=0}^{n-1}T_*^i\nu$. Applying Proposition \ref{prop measure classification} to $U^n$ and $\nu$, we can write $\supp(\nu)=  \bigcup_{i=0}^{m'-1}U^i a' + H'$, with $m'\in\N$, $a'\in \T^d$ and $H'\le \T^d$ being a closed connected subgroup. The equality of the supports of $\mu$ and $\frac{1}{n}\sum_{i=0}^{n-1}T_*^i\nu$ implies that $H$ and $H'$ have equal dimension. As both are connected we conclude that $H=H'$.

    It follows that $\supp(\nu)$ consists of a subset of the $H$-cosets comprising $\supp(\mu)$.
    But since $\mu$ is $U$-ergodic and $n$ is coprime to $m$, $U^n$ acts transitively on the collection of cosets $\{U^i a + H\}_{i=0}^{m-1}$, and hence $\supp(\nu)=\supp(\mu)$. As both measures are given by the translate of the Haar measure to each coset, we conclude that $\mu=\nu$, and $\mu$ is $T^n$-ergodic.    

    We turn to prove that $(\supp(\mu), \restr{U}{\supp(\mu)})$ is uniquely ergodic. First let us assume that $\supp(\mu)= a + H$ is connected.
    Notice that
    \[a + H = U(a + H) = Ua + UH,\]
    and since the subgroups $H, UH$ differ by a translation, they must be equal, namely $UH = H$ and $Ua - a \in H$.
    Consider the affine transformation $T:H\to H$ defined by $T(h) = Uh + Ua - a$. Observe that the translation map $\ell_a(x)=x+a$ satisfies $\ell_a \circ T = U\circ \ell_a$ on $H$, and it takes the Haar measure on $H$ to $\mu$, so the unique ergodicity of $\restr{U}{a+H}$ will follow once we show $T$ is uniquely ergodic.
    As $H$ is closed and connected, the dynamical system $(H,T)$ is isomorphic to a torus $\T^k$ for some $k\le d$ equipped with an affine transformation whose linear part is unipotent. Moreover, this system is ergodic with respect to Haar measure on $\T^k$ as it is conjugate to the ergodic system $(\T^d,U,\mu)$.
    The study of unique ergodicity for affine transformations of the torus was carried out by Hahn; in particular, \cite[Theorems 6 and 18]{hahn} show that an ergodic affine transformation with a unipotent linear part is uniquely ergodic.
    
    Now if $\supp(\mu)$ is not connected, we still have $\supp(\mu)=\bigcup_{i=0}^{m-1}E_i$, where $E_i\coloneqq U^i a + H$.
    Since $\mu$ is ergodic, the induced transformation $U^m$ on each coset $E_i$ is ergodic with respect to $\frac{1}{\mu(E_i)}\restr{\mu}{E_i}$ (see \cite[Lemma 2.43]{ergodic_view}). Thus, by what we have just proved, this is the unique $U^m$-invariant measure on $E_i$, and it follows that $\mu$ is the unique $U$-invariant measure on $\supp(\mu)$.

    Finally, if $n$ is coprime to $m$, then $\mu$ is $U^n$-ergodic by the first part of the proof. Now the second part, applied to $U^n$, shows that $\restr{U^n}{\supp(\mu)}$ is uniquely ergodic.
\end{proof}
\subsection{Dense periodic measures for unipotent toral automorphisms}\label{subsec dpm unipotent}
Our next goal is to establish a strong form of dense periodic measures for unipotent toral automorphisms.
It asserts that for any ergodic measure $\mu$, there \emph{exists} $c\in \N$ such that $\mu$ can be approximated by a measure supported on the orbit of a point whose period is \emph{any} sufficiently large element of $c\N$.
This contrasts with the situation in the case of partial specification, discussed in previous sections, where for \emph{any} given $c\in \N$, the set of periods could be chosen to be \emph{some} subset of $c\N$, but the specific elements within that set could not be predetermined.
This distinction will allow us to construct periodic points in group extensions where one factor is a unipotent automorphism and the other satisfies partial specification.
\begin{lemma}\label{lemma periodic in subgroup}
    Let $U\in \GL_d(\Z)$ be a unipotent matrix. Let $X\le \T^d$ be an infinite closed subgroup and let $x\in X$. Then there exist $c\in \N$ and a sequence $(x_n)_{n=1}^\infty$ in $X$ such that $x_n$ is of period $cn$ and $x_n\underset{n\to\infty}{\longrightarrow}x$.
\end{lemma}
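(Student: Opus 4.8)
The plan is to take the $x_n$ to be rational points of $X$ lying in the coset $x+X^{\circ}$ and converging to $x$, and to choose $c$ so that the $U$-period of each $x_n$ divides $cn$. The entire argument hinges on the fact that for a \emph{unipotent} integer matrix the $U$-period of a rational point of $\T^d$ grows only linearly in its denominator --- and not polynomially as for a general matrix --- so that a single constant $c$ together with denominators proportional to $n$ will do the job while still allowing the approximants to become dense.

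\textbf{The arithmetic input.} Put $N=U-\Id$, so $N^r=0$ for some $r\le d$, and set $c_0=\operatorname{lcm}(1,2,\dots,d)$ (a smaller constant depending only on $r$ would suffice). I would first establish that $U^{c_0 M}\equiv\Id\pmod M$ for every $M\in\N$; equivalently, every $z\in\T^d$ with $Mz=0$ satisfies $U^{c_0M}z=z$. Expanding
\[
    U^{c_0M}-\Id=\sum_{j=1}^{r-1}\binom{c_0M}{j}N^j ,
\]
it is enough to check that $M\mid\binom{c_0M}{j}$ for $1\le j\le r-1$, and this follows from the elementary bound $v_p\!\big(\binom{c_0M}{j}\big)\ge v_p(c_0M)-v_p(j)$ (from $\binom{c_0M}{j}=\tfrac{c_0M}{j}\binom{c_0M-1}{j-1}$) combined with $v_p(c_0)\ge v_p(j)$, which holds for all $j\le d$ by the choice of $c_0$. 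Consequently, the $U$-period of any rational point of $\T^d$ with denominator dividing $M$ divides $c_0 M$.

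\textbf{Passing to the identity component.} Let $H=X^{\circ}$ and $m=\lvert X/H\rvert$; since $X$ is an infinite compact abelian Lie group, $H$ is a subtorus of dimension $\ge 1$ and $m<\infty$, and I would write $H=(W+\Z^d)/\Z^d$ for a rational subspace $W\le\R^d$. From $mx\in H$, picking a lift $w\in W$ of $mx$ and setting $a=x-\tfrac1m w+\Z^d$ yields a rational point $a\in\T^d$ with $q_0 a=0$ for some $q_0\mid m$, and with $x=a+h$ where $h=\tfrac1m w+\Z^d\in H$. Using an integer basis $b_1,\dots,b_k$ of the lattice $W\cap\Z^d$ and approximating the coordinates of $h$ in this basis by fractions with denominator $q_0 n$, one obtains for each $n$ a point $h_n\in H$ with $q_0 n\, h_n=0$ and $d_{\T^d}(h_n,h)\le\big(\sum_i\lVert b_i\rVert\big)/(q_0 n)\to 0$.

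\textbf{Conclusion, and the main obstacle.} I would then set $c=c_0 q_0$ and $x_n=a+h_n\in a+H=x+H\subseteq X$. By the arithmetic input applied to $a$ (with $M=q_0$) and to $h_n$ (with $M=q_0 n$), both $a$ and $h_n$ are fixed by $U^{c_0 q_0 n}=U^{cn}$; hence $U^{cn}x_n=x_n$, i.e.\ $x_n$ has period $cn$, while $x_n\to a+h=x$. The substantive point --- and the only step I expect to require genuine care --- is the arithmetic input: one must notice that unipotence forces the period of a denominator-$M$ rational point to be $O(M)$ rather than a higher power of $M$, which is precisely what lets one fix $c$ once and for all and still use denominators tending to infinity. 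Given that, the remaining work is routine bookkeeping, the one mild subtlety being to absorb the (a priori arbitrary) denominator $q_0$ of the coset representative $a$ into the constant $c$.
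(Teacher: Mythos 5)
Your proof is correct, and while the overall strategy coincides with the paper's (write $x$ as a torsion point plus a point of the identity component, approximate along the subtorus by points whose denominators grow linearly in $n$, and use the binomial expansion of a unipotent power to get periods linear in the denominator), your arithmetic core is genuinely different and cleaner. The paper first passes to the Jordan normal form $J$ of $U$, proves the period bound coordinate-wise for $J$ using $\frac{p_n}{k!}\widetilde{x}_n\in\Z^d$ with $p_n=(d!\ell)n$, and then handles a general unipotent $U=QJQ^{-1}$ by pulling the subgroup back through the integer matrix $Q$ and pushing the approximating sequence forward; it also simply asserts that the relevant coset has a rational base point $v_0$. You instead work directly with the integer nilpotent matrix $N=U-\Id$ and prove the uniform congruence $U^{c_0M}\equiv\Id\pmod M$ for $c_0=\operatorname{lcm}(1,\dots,d)$, via the valuation bound $v_p\big(\binom{c_0M}{j}\big)\ge v_p(c_0M)-v_p(j)$ together with $v_p(c_0)\ge v_p(j)$ for $j\le d$; this eliminates both the Jordan-form reduction and the conjugation step, and your explicit construction of the rational representative $a$ (with $q_0a=0$, $q_0\mid m$, absorbed into $c=c_0q_0$) justifies the rationality that the paper takes for granted. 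All the supporting details check out: $H=X^\circ$ is a subtorus $\pi(W)$ for a rational subspace $W$, the points $h_n$ built from an integer basis of $W\cap\Z^d$ are $(q_0n)$-torsion elements of $H$ converging to $h$, and since the paper's notion of ``period $cn$'' does not require minimality, $U^{cn}x_n=x_n$ is exactly what is needed.
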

\begin{proof}
    First, let us assume that $U$ is in Jordan normal form.
    The connected component of $X$ containing $x$ is of the form
    \[v_0 + \Span_\R \{v_1,\dots,v_m\} + \Z^d\]
    for some $m\le d$, $v_0\in \Q^d$ and $v_1,\dots,v_m\in \Z^d$.
    
    Choose sequences of integers $(r_{i,n})_{n=1}^\infty$, for $i=1,\dots,m$, such that the sequence $(\widetilde{x}_n)\subseteq \R^d$, defined by
    \[\widetilde{x}_n \coloneqq v_0 + \sum_i \frac{r_{i,n}}{n}v_i,\]
    satisfies $x_n\coloneqq \widetilde{x}_n+\Z^d \underset{n\to\infty}{\longrightarrow}x$.
    Choose $\ell\in \N$ which is divisible by all of the denominators of the entries of $v_0$. We claim that $x_n$ is of period $p_n\coloneqq(d!\ell )n$. To see that, first observe that for every integer $0\le k <d$, we have $\frac{p_n}{k!}\widetilde{x}_n\in \Z^d$. Now since $U$ is in Jordan normal form, every entry of $U^{p_n}\widetilde{x}_n$ is of the form
    \[U^{p_n}(\widetilde{x}_n)[i] = \widetilde{x}_n[i] + \binom{p_n}{1}\widetilde{x}_n[i+1] + \cdots + \binom{p_n}{k}\widetilde{x}_n[i+k]\]
    for some $0\le k < d$. Thus, $U^{p_n}(\widetilde{x}_n) - \widetilde{x}_n \in \Z^d$, confirming that $U^{p_n} x_n = x_n$.

    For a general unipotent matrix $U$, let $J$ denote its Jordan normal form, and let $Q$ be an integer matrix such that $U=QJQ^{-1}$ (noting that $Q^{-1}$ may not be an integer matrix). Define $Y = \{y\in \T^d:Qy\in X\}$ and choose some $y\in Y$ such that $Qy = x$ (which is possible since $Q$ induces a surjective endomorphism of $\T^d$). Apply the previous part of the proof to find a sequence $(y_n)$ in $Y$ which converges to $y$ and such that $J^{p_n} y_n = y_n$. Then the sequence $(Qy_n)$ has the required properties.
\end{proof}
Recall that for a dynamical system $(X,T)$, $x\in X$ and $n\in \N$ we denote $\mathfrak{m}_{T,x,n} = \frac{1}{n}\sum_{i=0}^{n-1} \delta_{T^i x}$, where $\delta_x$ is the Dirac measure at $x$ (see the discussion preceding Theorem \ref{thm dense ergodic measures}). 
\begin{proposition}\label{prop strong dpm}
    Let $U\in \GL_d(\Z)$ be a unipotent matrix and $\mu\in \M_{U}(\T^d)$ an ergodic measure.
    Then there exists $c\in \N$ and a sequence $(x_n)_{n=1}^\infty$ in $\T^d$ such that $x_n$ is of period $cn$ 
    and $\m_{U,x_n, cn}\underset{n\to\infty}{\longrightarrow}\mu$ (in the weak-* topology).
\end{proposition}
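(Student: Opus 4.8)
The plan is to combine the classification of ergodic measures (Proposition~\ref{prop measure classification}) and the unique ergodicity of the restricted systems (Proposition~\ref{prop unique ergodicity}) with an explicit construction of rational periodic points, refined from the argument of Lemma~\ref{lemma periodic in subgroup} so as to control equidistribution. First I would dispose of the case where the group $H$ from Proposition~\ref{prop measure classification} is trivial: then $\mu$ is the uniform measure on the finite orbit $\{a,Ua,\dots,U^{m-1}a\}$, which has period $m$, so $c=m$ and $x_n=a$ for every $n$ gives $\m_{U,a,cn}=\mu$. Next I would reduce to $m=1$. Passing to the unipotent automorphism $U^m$ and the component $E_0=a+H$, the measure $\mu_0=m\,\mu|_{E_0}$ is $U^m$-ergodic with connected support $E_0$; since for any $w$ one has $\m_{U,w,mN}=\tfrac1m\sum_{i=0}^{m-1}(U^i)_\ast\m_{U^m,w,N}$, it suffices to produce, for $U^m$, rational points $w_n$ of period $c'n$ with $\m_{U^m,w_n,c'n}\to\mu_0$; then $c=mc'$ and $x_n=w_n$ work for $\mu$. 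So assume henceforth that $\supp(\mu)=z+H$ is connected, $\mu$ is the translate to $z+H$ of Haar measure on $H$, and $Uz-z\in H$, so that $\restr{U}{z+H}$ is an ergodic (indeed uniquely ergodic, by Proposition~\ref{prop unique ergodicity}) affine transformation with unipotent linear part.

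The heart of the argument is the construction of $x_n$. Since $Uz-z\in H$, the image $\bar a$ of $z$ under the quotient $q\colon\T^d\to\T^d/H$ is fixed by the induced automorphism $\bar U$; the $\bar U$-fixed points form a closed subgroup containing $\bar a$ with dense rational points, and $H$ is rational, so I can choose rational points $x_n\to z$ whose cosets $x_n+H$ are $U$-invariant, with the denominators arranged so that $x_n$ has period exactly $cn$ for a fixed $c$ — a suitable multiple of $m$ and of the factorials forced by the Jordan structure of $U$, exactly as in the proof of Lemma~\ref{lemma periodic in subgroup} — and with the denominators in the ``$H$-directions'' chosen coprime to $cn$. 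For such $x_n$ the entire $U$-orbit of $x_n$ stays in the $U$-invariant coset $x_n+H$, on which $U$ acts as an affine map with unipotent linear part and rational translation $Ux_n-x_n\to Uz-z$; the finite orbit is then a set of equally spaced points, and the claim is that the uniform measure on it converges to Haar measure on $z+H=\supp(\mu)$, i.e.\ to $\mu$.

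I would verify this through the Fourier coefficients. For a character $\chi\in\widehat{\T^d}$ one has $\int\chi\,d\m_{U,x_n,cn}=\frac1{cn}\sum_{i=0}^{cn-1}(\widehat{U}^i\chi)(x_n)$. When $\chi$ is trivial on $H$ this expression is essentially constant in $i$ (as $x_n+H$ is $U$-invariant) and converges to $\chi(z)=\int\chi\,d\mu$; when $\chi$ is nontrivial on $H$ it is an exponential sum $\frac1{cn}\sum_{i=0}^{cn-1}e^{2\pi i P_\chi(i)}$ with $P_\chi$ a polynomial of bounded degree in $i$, taken over a full period, and a Weyl/Gauss-sum estimate shows it vanishes, or tends to $0$, because the coprimality built into the denominators forces the leading coefficients of $P_\chi$ to have denominator tending to $\infty$. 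Since characters span a dense subspace of $C(\T^d)$ and all these Fourier coefficients are bounded by $1$, this gives $\m_{U,x_n,cn}\to\mu$.

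The main obstacle is precisely this equidistribution step, and in particular arranging it together with the prescribed period $cn$: for a unipotent map an arbitrary rational point near $z$ of period $\sim cn$ has a $U$-orbit that in general equidistributes toward Haar measure on a \emph{proper} subtorus-union inside $\supp(\mu)$ rather than on all of $\supp(\mu)$ (this already happens in a $3$-dimensional example), and the remedy is exactly to place $x_n$ inside a $U$-invariant coset of $H$ and let only the denominators transverse to the $\bar U$-fixed directions grow, so that the finite orbit fills $x_n+H$ ever more finely. Unique ergodicity of $(\supp(\mu),\restr{U}{\supp(\mu)})$ and the explicit description of $\mu$ from Proposition~\ref{prop measure classification} are what pin the limit down to $\mu$; carrying out the Weyl estimates for the nontrivial Jordan blocks of $\restr{U}{H}$ is the remaining technical work.
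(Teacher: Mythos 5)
Your skeleton (classify via Proposition~\ref{prop measure classification}, handle finite support separately, reduce to a single coset, build periodic points near $a$ in the spirit of Lemma~\ref{lemma periodic in subgroup}) matches the paper's, and your reduction to $m=1$ via $\m_{U,w,mN}=\tfrac1m\sum_{i=0}^{m-1}(U^i)_*\m_{U^m,w,N}$ is correct. But the heart of your argument --- the Fourier/Weyl-sum equidistribution of the finite orbits inside $x_n+H$ --- is exactly the part you leave as ``remaining technical work,'' and it is also where the unverified claims pile up: that one can simultaneously prescribe the period to be exactly $cn$ for a \emph{fixed} $c$ while making the ``$H$-direction'' denominators coprime to $cn$ (for a unipotent map the period is essentially dictated by those denominators, so this needs care), and that the resulting leading coefficients of the polynomial phases $P_\chi$ have denominators tending to infinity for \emph{every} character nontrivial on $H$, across all Jordan blocks. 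As written, the proposal defers its main obstacle rather than resolving it, so it is not a complete proof.

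The deeper issue is that this entire equidistribution analysis is unnecessary, and you half-notice this without acting on it. Since $x_n$ has exact period $cn$, the measure $\m_{U,x_n,cn}$ is genuinely $U$-invariant; since $x_n\in\overline{\langle a\rangle}$ and $U^m a-a\in H$, each coset $x_n+H$ is $U^m$-invariant, so the whole orbit of $x_n$ lies in $\bigcup_{i=0}^{m-1}U^ix_n+H$, and $x_n\to a$ forces every weak-* accumulation point of $(\m_{U,x_n,cn})$ to be a $U$-invariant measure supported inside $\supp(\mu)$. Proposition~\ref{prop unique ergodicity} says the restricted system $(\supp(\mu),\restr{U}{\supp(\mu)})$ is \emph{uniquely} ergodic, so that accumulation point must equal $\mu$ --- there is no room for it to be ``Haar on a proper subtorus-union inside $\supp(\mu)$,'' which is the scenario your Weyl estimates are designed to exclude. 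In other words, you only need invariance of the empirical measures plus convergence of their supports into $\supp(\mu)$; you do not need the individual orbits to fill $x_n+H$, to be rational, or to lie in $U$-invariant cosets by construction. Replacing your Fourier analysis with this soft compactness-plus-unique-ergodicity step (which is what the paper does) closes the gap and shortens the proof considerably.
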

\begin{proof}
    By Proposition \ref{prop measure classification}, $\supp(\mu) = \bigcup_{i=0}^{m-1} U^i a + H$ for some $a\in \T^d$ and a closed connected subgroup $H\le \T^d$.
    If $H$ is finite then so is $\supp(\mu)$. Then for every $n\in\N$ we may take $x_n$ to be any point in $\supp(\mu)$; by ergodicity, the $U$-invariant measure supported on its orbit is exactly $\mu$.
    
    Otherwise, $a+H$ contains non-torsion elements, and by replacing $a$ if necessary, we may assume that $a$ itself is non-torsion. Thus, we can apply Lemma \ref{lemma periodic in subgroup} to obtain $c\in \N$ and a sequence $(x_n)$ contained in the subgroup $\overline{\langle a\rangle}$ such that $x_n$ is of period $cn$ and $x_n\underset{n\to\infty}{\longrightarrow}a$.
    
    Moreover, since $a+H$ is $U^m$-invariant and $x_n \in \overline{\langle a \rangle}$, every coset $x_n + H$ is $U^m$-invariant as well. Consequently, $\m_{U, x_n, cn}$ is a $U$-invariant measure whose support is contained inside $\bigcup_{i=0}^{m-1}U^i x_n +H$, and combined with the fact that $x_n \to a$, it implies that every accumulation point of the sequence $(\m_{U, x_n, cn})_{n}$ is supported on a subset of $\supp(\mu)$. Now by Proposition \ref{prop unique ergodicity} (and the compactness of $\M_U(\T^d)$), $\m_{U, x_n, cn} \to \mu$.
\end{proof}
\subsection{A substitute for partial specification}
Unipotent toral automorphisms generally do not satisfy partial specification. In the final part of this section, we establish Proposition \ref{prop interval perm}, which provides a useful substitute for this property. Rather than requiring a single periodic orbit to approximate an entire orbit segment, we demand only that smaller segments of the periodic orbit approximate parts of the original orbit, while ensuring that both orbits share a similar distribution in $\T^d$. We begin with a simple preparatory lemma.
\begin{lemma}\label{lemma T^k measure convergence}
    Let $(X,T)$ be a dynamical system. Let $(z_n)_{n=1}^\infty$ be a sequence in $X$ and 
    $(q_n)_{n=1}^\infty$ a sequence in $\N$ such that $q_n \to \infty$. Suppose that $\m_{T,z_n,q_n}\to \mu\in \M_T(X)$.
    Let $K\in \N$ and for every $n$ let $p_n = \lfloor q_n / K \rfloor$.
    If the dynamical system $(\supp(\mu), \restr{T^K}{\supp(\mu)})$ is uniquely ergodic, then $\m_{T^K, z_n, p_n}\to\mu$.
\end{lemma}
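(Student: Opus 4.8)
The plan is to relate the $T^K$-Birkhoff averages along the shifted orbits $T^r z_n$ ($0\le r<K$) to the $T$-Birkhoff average along $z_n$, and then invoke the unique ergodicity hypothesis. First I would record the elementary identity
\[\frac{1}{K}\sum_{r=0}^{K-1}\m_{T^K,\,T^r z_n,\,p_n}=\m_{T,\,z_n,\,Kp_n},\]
which holds because the exponents $jK+r$ with $0\le j<p_n$ and $0\le r<K$ enumerate $\{0,1,\dots,Kp_n-1\}$ bijectively. Since $Kp_n\le q_n<Kp_n+K$, a direct total-variation estimate gives $\lVert\m_{T,z_n,q_n}-\m_{T,z_n,Kp_n}\rVert\le 2K/q_n$, which tends to $0$ because $q_n\to\infty$. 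Combined with the hypothesis $\m_{T,z_n,q_n}\to\mu$, this shows that the left-hand side of the identity converges weak-$*$ to $\mu$.

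Next I would run a compactness argument. Since $\M(X)$ is compact and metrizable, it suffices to show that every weak-$*$ convergent subsequence of $(\m_{T^K,z_n,p_n})_n$ has limit $\mu$. Passing to a further subsequence, I may assume that for each $r=0,\dots,K-1$ the sequence $\m_{T^K,T^r z_n,p_n}$ converges weak-$*$ to some $\nu_r\in\M(X)$, with $\nu_0$ being the prescribed subsequential limit of $\m_{T^K,z_n,p_n}$. Each $\nu_r$ is $T^K$-invariant, since $\lVert\m_{T^K,T^r z_n,p_n}-(T^K)_*\m_{T^K,T^r z_n,p_n}\rVert\le 2/p_n\to 0$. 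Passing to the limit in the displayed identity yields $\frac1K\sum_{r=0}^{K-1}\nu_r=\mu$.

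From $\frac1K\sum_{r}\nu_r=\mu$ and $\nu_r\ge 0$ we obtain $\nu_r\le K\mu$; in particular $\nu_r\ll\mu$, so $\supp(\nu_r)\subseteq\supp(\mu)$. Thus each $\nu_r$ is a $T^K$-invariant Borel probability measure supported on $\supp(\mu)$. But $\mu$ is itself such a measure, and $(\supp(\mu),\restr{T^K}{\supp(\mu)})$ is uniquely ergodic by hypothesis, so $\nu_r=\mu$ for every $r$; in particular $\nu_0=\mu$. Hence every subsequential limit of $(\m_{T^K,z_n,p_n})_n$ equals $\mu$, and the sequence itself converges to $\mu$.

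I do not anticipate a genuine obstacle here; the one point needing care is that $\m_{T,z_n,q_n}$ cannot be compared directly with the shifted averages, so one must first pass from $q_n$ to $Kp_n$ via the total-variation bound. The conceptual heart of the argument is that, because the measures $\nu_r$ average to all of $\mu$, each $\nu_r$ is forced to be supported on $\supp(\mu)$, where unique ergodicity leaves no room for it to differ from $\mu$.
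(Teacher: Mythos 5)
Your proof is correct and follows essentially the same strategy as the paper's: both reduce the lemma to showing that every subsequential limit of $(\m_{T^K,z_n,p_n})_n$ is a $T^K$-invariant measure supported on $\supp(\mu)$, and then invoke the unique ergodicity of $(\supp(\mu),\restr{T^K}{\supp(\mu)})$. The only difference is in how the support containment is obtained --- the paper bounds $\m_{T^K,z_n,p_n}(B)\le 2K\,\m_{T,z_n,q_n}(B)$ for balls $B$ disjoint from $\supp(\mu)$ and applies the Portmanteau theorem, whereas you pass to the limit in the exact identity $\frac1K\sum_{r}\m_{T^K,T^rz_n,p_n}=\m_{T,z_n,Kp_n}$ to get the (slightly stronger) domination $\nu_r\le K\mu$; both are valid.
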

\begin{proof}
    By the unique ergodicity, it suffices to show that every accumulation point of $(\m_{T^K, z_n, p_n})$ is supported on a subset of $\supp(\mu)$.
    Let $x\in X\setminus \supp(\mu)$ and choose an open ball $B$ containing $x$ with $\overline{B}\subseteq X\setminus \supp(\mu)$, so that $\mu(\overline{B})=0$.
    Since $2p_nK\ge q_n$ for every sufficiently large $n$, we obtain from the Portmanteau theorem
    \[\m_{T^K, z_n, p_n}(B) \le 2K\cdot\m_{T,z_n,q_n}(B) \to 2K\cdot\mu(B) = 0.\]
    Applying the Portmanteau theorem once again, if $\nu$ is any accumulation point of $(\m_{T^K, z_n, p_n})_n$ then $\nu(B)=0$. Hence, $x\notin \supp(\nu)$, and the proof is complete.
    % achim book page 292
\end{proof}
\begin{proposition}\label{prop interval perm}
    Let $U\in \GL_d(\Z)$ be a unipotent matrix, $\mu\in \M_U(\T^d)$ an ergodic measure whose support consists of $m\in \N$ connected components and $x\in \T^d$ a $\mu$-generic point. Then for every $K\in\N$ coprime to $m$ and $\varepsilon>0$ there exists $z\in\T^d$ of period $q>K$ and a permutation $\pi$ of $\{0,\dots,q-1\}\cap K\N$ such that
    \[{\Big\lvert}\{0\le i < \frac{q}{K}: d_{\T^d}(U^{iK}z, U^{\pi(iK)}x)<\varepsilon\}{\Big\rvert} > (1-\varepsilon)\frac{q}{K}.\]
    Moreover, there exists $c\in \N$, depending only on $U$ and $\mu$, such that $z$ can be chosen with period $q$ equal to any sufficiently large element of $c\N$.
\end{proposition}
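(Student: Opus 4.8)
The plan is to take $z$ to be one of the periodic points supplied by Proposition~\ref{prop strong dpm} and to build $\pi$ by comparing the empirical distribution of the $U^K$-orbit of $z$ with that of the $U^K$-orbit segment of $x$; both will be weak-* close to $\mu$, and a counting argument then yields a good matching.

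\textbf{Reduction to equidistribution.}
First I would apply Proposition~\ref{prop strong dpm} to obtain $c\in\N$, depending only on $U$ and $\mu$, and points $x_n$ of period $cn$ with $\m_{U,x_n,cn}\to\mu$. Since $\gcd(K,m)=1$, Proposition~\ref{prop unique ergodicity} shows that $(\supp(\mu),\restr{U^K}{\supp(\mu)})$ is uniquely ergodic with unique invariant measure $\mu$, so Lemma~\ref{lemma T^k measure convergence}, applied with $T=U$ to the sequences $(x_n)$ and $(cn)$, gives $\m_{U^K,x_n,p_n}\to\mu$ with $p_n=\lfloor cn/K\rfloor$. Applying the same lemma to the constant sequence $z_n\equiv x$ — legitimate since $x$ is $\mu$-generic, so $\m_{U,x,cn}\to\mu$ — gives $\m_{U^K,x,p_n}\to\mu$ as well. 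Thus, writing $z:=x_n$, $q:=cn$ and letting $P$ be the number of integers $i$ with $0\le i<q/K$ (so $P=\lceil q/K\rceil$), both $\frac{1}{P}\sum_{0\le i<q/K}\delta_{U^{iK}z}$ and $\frac{1}{P}\sum_{0\le i<q/K}\delta_{U^{iK}x}$ converge weakly to $\mu$ as $n\to\infty$; passing from $p_n$ to $P$ changes only a bounded number of atoms.

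\textbf{Building the matching.}
For the given $\varepsilon$, fix a finite Borel partition $\T^d=\bigsqcup_{j=1}^L P_j$ with $\operatorname{diam}(P_j)<\varepsilon$ and $\mu(\partial P_j)=0$ for all $j$; such a partition exists for any finite Borel measure on a compact metric space, since around each point one may pick a ball of radius $<\varepsilon/2$ with $\mu$-null bounding sphere (only countably many radii fail this), extract a finite subcover, and disjointify. By the Portmanteau theorem, $\#\{0\le i<q/K:U^{iK}z\in P_j\}=\mu(P_j)P+o(P)$ for each $j$, and likewise with $x$ in place of $z$, so summing over the finitely many cells, $\sum_{j=1}^L\bigl\lvert\#\{i:U^{iK}z\in P_j\}-\#\{i:U^{iK}x\in P_j\}\bigr\rvert<\varepsilon P$ once $n$ is large. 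Now define $\pi$ cell by cell: inside each $P_j$ match as many indices $i$ with $U^{iK}z\in P_j$ to indices $i'$ with $U^{i'K}x\in P_j$ as possible (this matches the minimum of the two cell-counts), and extend the resulting partial bijection of $\{0,\dots,q-1\}\cap K\N$ to a full permutation arbitrarily. The number of unmatched indices is then at most the sum displayed above, hence $<\varepsilon P$, while every matched index satisfies $d_{\T^d}(U^{iK}z,U^{\pi(iK)}x)<\operatorname{diam}(P_j)<\varepsilon$ since both points lie in the same cell. As $P\ge q/K$, this gives $\#\{0\le i<q/K:d_{\T^d}(U^{iK}z,U^{\pi(iK)}x)<\varepsilon\}>(1-\varepsilon)q/K$ after shrinking the working $\varepsilon$ slightly at the start. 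Finally $q=cn$ can be any sufficiently large element of $c\N$, in particular $q>K$, with $c$ depending only on $U$ and $\mu$; the degenerate case where $H$ is finite — so $\supp(\mu)$ is a single finite $U$-orbit on which $U^K$ acts transitively because $\gcd(K,m)=1$ — is handled directly by matching each orbit point to itself.

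\textbf{Main obstacle.}
The delicate point is upgrading the weak-* convergence of the two empirical measures to an actual bijection $\pi$ with most matched pairs $\varepsilon$-close, rather than a mere statement about how mass is spread out. The partition into $\mu$-null-boundary cells followed by greedy cellwise matching is the natural device for this; the care required lies in controlling the $o(P)$ errors uniformly over the (finitely many) cells and in ensuring that the discrepancy between $q/K$, $\lfloor q/K\rfloor$ and $\lceil q/K\rceil$ does not spoil the final count.
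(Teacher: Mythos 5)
Your proposal is correct and follows essentially the same route as the paper: invoke Proposition \ref{prop strong dpm} for the periodic points with period in $c\N$, use Proposition \ref{prop unique ergodicity} and Lemma \ref{lemma T^k measure convergence} to pass to $U^K$-empirical measures converging to $\mu$, and then match the two orbits cell by cell via a small-diameter partition with $\mu$-null boundaries and the Portmanteau theorem. The only cosmetic difference is that the paper partitions a neighborhood of $\supp(\mu)$ into boxes rather than all of $\T^d$ into balls, which changes nothing of substance.
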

\begin{proof}
    By Proposition \ref{prop measure classification}, $\supp(\mu)= \bigcup_{i=0}^{m-1} U^i a + H$, where $a\in\T^d$, $H\le \T^d$ is a closed connected proper subgroup and the restriction of $\mu$ to each coset is the translate of the Haar measure on $H$. 
    Take an open neighborhood $\supp(\mu)\subseteq O\subseteq \T^d$ sufficiently small so that there exists a partition of boxes $X_1,\dots, X_{r}$ of $O$ that satisfies the following:
    \begin{enumerate}
        \item the diameter of each $X_j$ is less than $\varepsilon$.
        \item No side of $X_j$ is contained in $\supp(\mu)$. \label{item measure 0 boundary}
    \end{enumerate}
    By item \eqref{item measure 0 boundary}, the intersection of $\supp(\mu)$ with any side of $X_j$ is lower dimensional and hence $\mu(\partial X_j) = 0$.
    Thus, the Portmanteau theorem implies that if $\mu_n\to\mu$ then $\mu_n(X_j)\to\mu(X_j)$.
    
    By Proposition \ref{prop strong dpm} we can choose $c\in \N$ (depending only on $U$ and $\mu$) and a sequence $(z_n)$ in $\T^d$ with $z_n$ of period $cn$ such that $\m_{U,z_n,cn}\to\mu$.
    The system $(\supp(\mu), \restr{U^K}{\supp(\mu)})$ is uniquely ergodic by Proposition \ref{prop unique ergodicity}, and hence we can apply Lemma \ref{lemma T^k measure convergence} to conclude that $\m_{U^K,z_n,\lfloor cn/K\rfloor}\to \mu$ as well.
    Therefore, fixing a sufficiently large $N$ and putting $z\coloneqq z_N$, $q\coloneqq cN$ and $p\coloneqq\lfloor q/K\rfloor$, we have for every $j=1,\dots,r$,
    \begin{equation*}
        {\big\lvert} \m_{U^K,z,p}(X_j) - \mu(X_j) {\big\rvert} < \frac{\varepsilon}{4r}.
    \end{equation*}
    Since $x$ is $\mu$-generic, a similar argument implies that $\m_{U^K,x,\lfloor cn/K\rfloor}\to \mu$, and enlarging $N$ if necessary, we have for every $j=1,\dots,r$,
    \begin{equation*}
        {\big\lvert} \m_{U^K,x,p}(X_j) - \mu(X_j) {\big\rvert} < \frac{\varepsilon}{4r}.
    \end{equation*}
    It follows from the last two equations that for every $j=1,\dots,r$,
    \begin{equation*}
        {\Big\lvert} \lvert\{0\le i < p: (U^K)^i x \in X_j\}\rvert - \lvert\{0\le i < p: (U^K)^i z \in X_j\}\rvert{\Big\rvert} < \frac{\varepsilon p}{2r} .
    \end{equation*}
    Thus, we can choose a permutation $\pi$ of $\{0,\dots,q-1\}\cap K\N$ such that $U^{iK} z$ and $U^{\pi(iK)}x$ belong to the same $X_j$ for more than $(1-\varepsilon/2)p$ values of $i$, and hence for every such $i$, $d_{\T^d}(U^{iK} z,U^{\pi(iK)}x) < \varepsilon$. Enlarging $N$ once again if necessary, we have $(1-\varepsilon/2)p > (1-\varepsilon)q/K$.
\end{proof}
\section{Dense periodic measures}\label{section dpm}
In this section we complete the proof of Theorem \ref{thm dpm intro}.
Denote by $\M_T^{\fs}(X)$ the subset of $\M_T(X)$ consisting  of finitely supported measures. First, we recall the definition of dense periodic measures.
\begin{definition}
    A dynamical system $(X,T)$ has \emph{dense periodic measures} if $\M_T^{\fs}(X)$ is dense in $\M_T(X)$ (with respect to the weak-* topology).
\end{definition}
\begin{remark}\label{remark dpm}
    Since the collection of convex combinations of ergodic measures is dense in $\M_T(X)$, having dense periodic measures is equivalent to the seemingly weaker property that the closure of $\M_T^{\fs}(X)$ contains all of the ergodic measures.

    In addition, if $(X,T)$ has dense periodic measures, then every ergodic measure in $\M_T(X)$ is actually a weak-* limit of a sequence of \textit{ergodic} measures in $\M_T^{\fs}(X)$, see \cite[Proposition 1.5]{phelps} and also \cite[Section 2]{levit_lubotzky}.
\end{remark}
Let $X$ be a compact metric space and  $B\subset C(X)$ the unit ball in the uniform norm. Choose a collection of continuous functions $\{f_n\}_{n=1}^\infty\subset B$ which is dense in $B$, and define a metric on $\mathcal{M}(X)$ (the space of Borel probability measures on $X$) by
\[d_{\mathcal{M}(X)}(\mu,\nu) = \sum_{n=1}^\infty 2^{-(n+1)} {\Big\lvert} \int f_n \;d\mu - \int f_n \;d\nu{\Big\rvert}.\]
It is well known that $d_{\mathcal{M}(X)}$ induces the weak-* topology on $\mathcal{M}(X)$.
\begin{lemma}\label{lemma point measure distance bound}
    Let $X$ be a compact metric space. For every $\varepsilon>0$ there exists $\delta > 0$ with the following property:
    for every $q\in \N$ and sequences $(x_n)_{n=1}^q, (y_n)_{n=1}^q $ in $X$,
    \begin{equation*}
         d_{\mathcal{M}(X)}(\frac{1}{q}\sum_{n=0}^{q-1} \delta_{x_n},\frac{1}{q}\sum_{n=0}^{q-1} \delta_{y_n}) \le \varepsilon + \frac{1}{q}{\big\lvert} \{0\le n < q : d_{\T^d}(x_n, y_n) \ge \delta\}{\big\rvert}.
    \end{equation*}
\end{lemma}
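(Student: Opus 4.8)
The plan is a direct triangle-inequality estimate, exploiting two features of the metric $d_{\mathcal{M}(X)}$: the series has a negligible tail (so only finitely many test functions $f_n$ matter), and each $f_n$ lies in the unit ball and is uniformly continuous on the compact space $X$. I read the $d_{\T^d}$ in the statement as the metric $d_X$ on $X$.

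First I would, given $\varepsilon>0$, fix $N\in\N$ large enough that $2^{-N}<\varepsilon/2$; since $\lVert f_n\rVert_\infty\le 1$ for every $n$, each term $\bigl|\int f_n\,d\mu-\int f_n\,d\nu\bigr|$ is at most $2$, so the contribution of all indices $n>N$ to $d_{\mathcal{M}(X)}(\mu,\nu)$ is bounded by $\sum_{n>N}2^{-(n+1)}\cdot 2=\sum_{n>N}2^{-n}=2^{-N}<\varepsilon/2$, uniformly in $\mu,\nu$. Next, since $f_1,\dots,f_N$ are uniformly continuous on $X$, I would choose $\delta>0$ so that $d_X(x,x')<\delta$ implies $|f_n(x)-f_n(x')|<\varepsilon$ for all $n=1,\dots,N$ simultaneously (taking the minimum over these finitely many moduli of continuity).

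Then I would fix $q$ and the two sequences, set $\mu=\frac1q\sum_{n=0}^{q-1}\delta_{x_n}$, $\nu=\frac1q\sum_{n=0}^{q-1}\delta_{y_n}$, and let $\mathcal{B}=\{0\le n<q:d_X(x_n,y_n)\ge\delta\}$. For each $n\le N$, I would split $\int f_n\,d\mu-\int f_n\,d\nu=\frac1q\sum_{j}(f_n(x_j)-f_n(y_j))$ into the indices in $\mathcal B$ and their complement: the former contribute at most $\frac{2}{q}|\mathcal B|$ (each summand at most $2\lVert f_n\rVert_\infty\le 2$), the latter at most $\varepsilon$ (by the choice of $\delta$). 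Hence $\bigl|\int f_n\,d\mu-\int f_n\,d\nu\bigr|\le\varepsilon+\frac{2}{q}|\mathcal B|$ for every $n\le N$. Summing with the weights $2^{-(n+1)}$ and using $\sum_{n=1}^\infty 2^{-(n+1)}=\tfrac12$, the first $N$ terms contribute at most $\tfrac12\bigl(\varepsilon+\tfrac{2}{q}|\mathcal B|\bigr)=\tfrac{\varepsilon}{2}+\tfrac1q|\mathcal B|$; adding the tail bound $\varepsilon/2$ yields $d_{\mathcal{M}(X)}(\mu,\nu)\le\varepsilon+\tfrac1q|\mathcal B|$, which is the claim.

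There is no real obstacle here; the only thing to watch is that the normalization $2^{-(n+1)}$ (rather than $2^{-n}$) is precisely what absorbs the factor $2=2\lVert f_n\rVert_\infty$ from the ``bad'' indices and makes the coefficient of $\tfrac1q|\mathcal B|$ come out to exactly $1$, so one should keep the sharp bound $\lVert f_n\rVert_\infty\le 1$ rather than a cruder estimate.
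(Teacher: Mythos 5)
Your proof is correct. It is the same estimate as the paper's in substance, but packaged differently: the paper first notes that $d_{\mathcal{M}(X)}$ is convex on $\mathcal{M}(X)\times\mathcal{M}(X)$, so the distance between the two empirical measures is at most $\frac1q\sum_n d_{\mathcal{M}(X)}(\delta_{x_n},\delta_{y_n})$, and then bounds each summand by $\varepsilon$ on the good indices (using continuity of $x\mapsto\delta_x$, which is where $\delta$ comes from) and by $1$ on the bad ones (using $d_{\mathcal{M}(X)}\le 1$). You instead unpack the definition of the metric, truncate the tail of the series, and invoke uniform continuity of the finitely many surviving $f_n$ — which amounts to proving from scratch the two facts the paper quotes, namely the continuity of $x\mapsto\delta_x$ and the diameter bound. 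Your version is more self-contained and makes explicit why the coefficient of $\frac1q\lvert\mathcal{B}\rvert$ is exactly $1$ (the normalization $2^{-(n+1)}$ absorbing the factor $2\lVert f_n\rVert_\infty$); the paper's is shorter because the convexity observation does the index-swapping for free. You are also right to read $d_{\T^d}$ in the statement as $d_X$.
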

\begin{proof}
    Given $\varepsilon > 0$, choose $\delta>0$ such that
    \[ d_{X}(x,y) < \delta \Longrightarrow d_{\mathcal{M}(X)}(\delta_{x},\delta_{y}) < \varepsilon.\]
    Observe that $d_{\mathcal{M}(X)}$ is a convex function on $\mathcal{M}(X) \times \mathcal{M}(X)$, and hence
    \[d_{\mathcal{M}(X)} (\frac{1}{q}\sum_{n=0}^{q-1} \delta_{x_n},\frac{1}{q}\sum_{n=0}^{q-1} \delta_{y_n}) = d_{\mathcal{M}(X)}{\big(}\frac{1}{q}\sum_{n=0}^{q-1}(\delta_{x_n}, \delta_{y_n}){\big)} \le
    \frac{1}{q}\sum_{n=0}^{q-1} d_{\mathcal{M}(X)} (\delta_{x_n}, \delta_{y_n}).\]
    Whenever $d_{X}(x_n,y_n) < \delta$, the corresponding term on the right-hand side is bounded above by $\varepsilon$; for the remaining terms, we note that $d_{\mathcal{M}(X)} \le 1$, and the lemma follows. 
\end{proof}
The following proposition provides the final key step in the proof of Theorem \ref{thm dpm intro}.
\begin{proposition}\label{prop spec and dpm}
    Let $(X,T)$ be an abelian group dynamical system. Suppose that $Y \le X$ is a $T$-invariant closed subgroup, such that $(Y,T)$ satisfies partial specification and $(X/Y,T)$ is isomorphic to a torus with a unipotent automorphism. Then $(X,T)$ has dense periodic measures.
\end{proposition}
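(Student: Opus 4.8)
The plan is to reduce, using Remark~\ref{remark dpm}, to showing that every ergodic $\mu\in\M_T(X)$ lies in the weak-$*$ closure of $\M_T^{\fs}(X)$, and then to construct a single periodic point $x\in X$ of a well-chosen period $q$ whose orbit $\varepsilon$-traces a permutation of the orbit segment $w,Tw,\dots,T^{q-1}w$ of a $\mu$-generic point $w$; since permuting an orbit segment leaves its empirical measure unchanged and $\m_{T,w,q}\to\mu$, Lemma~\ref{lemma point measure distance bound} then forces $\m_{T,x,q}$ close to $\mu$. Fix $\mu$ ergodic and $\varepsilon>0$, fix an $X$-invariant metric $d_X$ on $X$, identify $(X/Y,T)$ with $(\T^d,U)$ for a unipotent $U$ (with the induced quotient metric on $\T^d$), and let $\bar\mu\in\M_U(\T^d)$ be the image of $\mu$, which is ergodic and, by Proposition~\ref{prop measure classification}, has support with some number $m$ of connected components. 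Note that $(Y,T)$, having partial specification, is topologically transitive and hence ergodic (Remark~\ref{remarks partial spec}(\ref{remark partial spec ergodic})).

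The order in which parameters are chosen is the crux of the argument. Pick a $\mu$-generic $w\in X$; its image $\bar w\in\T^d$ is $\bar\mu$-generic. Let $c_0$ be the constant of Proposition~\ref{prop interval perm} (depending only on $U$ and $\bar\mu$), let $\PP\subseteq c_0\N$ be a period set for which $(Y,T)$ satisfies partial specification, and let $M$ be the spacing constant that this partial specification attaches to a small auxiliary $\varepsilon'\le\varepsilon$. Then choose $K\in\N$ coprime to $m$ with $K>M/\varepsilon'$, choose a small $\delta>0$ in terms of $K$ and $\varepsilon$ (a joint modulus-of-continuity choice for the finitely many maps $T^0,\dots,T^{K-1}$), and finally take a large $q\in\PP$, so that $q\in c_0\N$ and $q\gg K$. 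This is legitimate because Proposition~\ref{prop interval perm} only requires the period to be a large element of $c_0\N$ and, crucially, does \emph{not} require $K\mid q$; hence $K$ may be chosen \emph{after} $M$ is revealed, which is exactly what allows $K$ to dominate $M$.

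Now assemble the two ingredients. Proposition~\ref{prop interval perm}, applied with $K$, $q$ and closeness parameter $\delta$, yields a period-$q$ point $\bar z\in\T^d$ and a permutation $\pi$ of $K\N\cap[0,q)$ with $d_{\T^d}(U^{iK}\bar z,U^{\pi(iK)}\bar w)<\delta$ for all $i$ outside an $\varepsilon'$-fraction. Since $(Y,T)$ is ergodic, Lemma~\ref{lemma periodic point in coset} lifts $\bar z$ to $\tilde z\in X$ with $T^q\tilde z=\tilde z$ (and $\tilde z$ projecting to $\bar z$). For each good $i$, the points $T^{iK}\tilde z$ and $T^{\pi(iK)}w$ have $\delta$-close images in $\T^d$, so by the definition of the quotient metric there is $\eta_i\in Y$ with $d_X\!\left(T^{\pi(iK)}w-T^{iK}\tilde z,\ \eta_i\right)<\delta$. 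Feed partial specification for $(Y,T)$ the $M$-spaced specification with entries $(T^{-iK}\eta_i;\,iK,\,(i+1)K-M)$, ranging over the good $i$ with $(i+1)K\le q/(1+\varepsilon)$ (so the last block ends below $q/(1+\varepsilon)$ and the period condition of Definition~\ref{def partial specification} holds); since $q\in\PP$ is large this produces $\eta\in Y$ of period $q$ that $\varepsilon'$-partially traces each block, i.e.\ $d_Y(T^{iK+j}\eta,\ T^j\eta_i)<\varepsilon'$ on all but an $\varepsilon'$-fraction of $j\in[0,K-M)$, for every such $i$.

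Finally set $x=\tilde z+\eta$: then $T^qx=T^q\tilde z+T^q\eta=x$, so $\m_{T,x,q}\in\M_T^{\fs}(X)$. For a position $n=iK+j$ lying in a good, in-range block and in the traced subset, translation invariance of $d_X$ gives
\[
d_X(T^nx,\ T^{\pi(iK)+j}w)=d_X\!\left(T^n\eta,\ T^j\!\left(T^{\pi(iK)}w-T^{iK}\tilde z\right)\right)<\varepsilon,
\]
because $d_Y(T^n\eta,T^j\eta_i)<\varepsilon'$ and $T^j$ carries the $\delta$-neighbourhood of $\eta_i$ into an $(\varepsilon-\varepsilon')$-neighbourhood, by the choice of $\delta$. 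The number of positions $n\in[0,q)$ not of this type — from bad blocks, the last $M$ slots of each block, the $\varepsilon'$-fraction of failed $Y$-tracing, the tail $n\ge q/(1+\varepsilon)$, and the fewer than $K$ leftover positions when $K\nmid q$ — is $O(\varepsilon)\,q$. Since $n=iK+j\mapsto\pi(iK)+j$ is, off a set of fewer than $K$ indices, a permutation of $[0,q)$, Lemma~\ref{lemma point measure distance bound} gives $d_{\M(X)}(\m_{T,x,q},\m_{T,w,q})=O(\varepsilon)$, and $\m_{T,w,q}\to\mu$ as $q\to\infty$ by $\mu$-genericity of $w$, so $\m_{T,x,q}$ is within $O(\varepsilon)$ of $\mu$ for $q$ large. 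Letting $\varepsilon\to0$ completes the proof. The step I expect to be the main obstacle is the quantifier coordination isolated in the second paragraph together with the error bookkeeping of the last one: one must produce a single period $q$ carrying both a periodic lift of a base orbit equidistributed for $\bar\mu$ and a partial-specification correction in $Y$ of the same period, and the unavoidable $M$-spacing of the latter forces $K$ to be large compared with $M$, which is possible only thanks to the freedom, in Proposition~\ref{prop interval perm}, to take the base period to be any large element of $c_0\N$.
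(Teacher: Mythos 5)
Your proposal is correct and follows essentially the same route as the paper: apply Proposition \ref{prop interval perm} on the quotient torus to get a period-$q$ point whose length-$K$ orbit blocks trace a permutation of the generic orbit, lift it to a $T^q$-fixed point via Lemma \ref{lemma periodic point in coset}, correct the $Y$-discrepancies with a period-$q$ point from partial specification, and close with Lemma \ref{lemma point measure distance bound}; you also correctly isolate the quantifier coordination ($K$ chosen after $M$, legitimate because Proposition \ref{prop interval perm} places no divisibility constraint linking $K$ to the period) as the crux, exactly as in the paper. The only differences are cosmetic (sign conventions for the correction terms and the order in which the two closeness parameters are fixed).
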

As noted in the introduction, the following is a simple but previously unknown consequence of Proposition \ref{prop spec and dpm}.
\begin{corollary}
    Let $A$ be an automorphism of $\T^d$, induced by a block-diagonal matrix consisting of a unipotent block and a block whose eigenvalues are not roots of unity. Then the system $(\T^d,A)$ admits dense periodic measures.
\end{corollary}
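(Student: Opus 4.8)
The plan is to obtain this as a direct consequence of Proposition \ref{prop spec and dpm}. Write $\T^d=\T^{d_1}\times\T^{d_2}$ so that $A$ is the direct sum $A_1\oplus A_2$, where $A_1\in\GL_{d_1}(\Z)$ is the unipotent block and $A_2\in\GL_{d_2}(\Z)$ is the block none of whose eigenvalues is a root of unity. (If $d_2=0$ this is a pure unipotent toral automorphism, handled by Proposition \ref{prop strong dpm} together with Remark \ref{remark dpm}; if $d_1=0$ it is handled by Theorem \ref{thm dense ergodic measures}. So I may assume $d_1,d_2\ge 1$.) I would set $Y=\{0\}\times\T^{d_2}$, an $A$-invariant closed subgroup of $\T^d$ with $(Y,A)\cong(\T^{d_2},A_2)$ and $(\T^d/Y,A)\cong(\T^{d_1},A_1)$. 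The quotient is a torus equipped with a unipotent automorphism, so the second hypothesis of Proposition \ref{prop spec and dpm} holds on the nose; it remains to verify the first, namely that $(\T^{d_2},A_2)$ satisfies partial specification.

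For this I would appeal to Theorem \ref{thm spec intro}, which requires $(\T^{d_2},A_2)$ to be an ergodic abelian group dynamical system satisfying the dcc. The dcc is automatic, since $\widehat{\T^{d_2}}=\Z^{d_2}$ is a finitely generated, hence Noetherian, $\Z$-module, so $\T^{d_2}$ admits no infinite strictly descending chain of closed subgroups at all. For ergodicity I would invoke the characterization recalled in the proof of Lemma \ref{lemma ergodic subgroup and quotient}: it suffices to show that the only $\widehat{A_2}$-periodic character is trivial; but a character $v\in\Z^{d_2}=\widehat{\T^{d_2}}$ satisfying $(A_2^\top)^k v=v$ for some $k\in\N$ must vanish, because no eigenvalue of $A_2$ (equivalently of $A_2^\top$) is a root of unity, so $(A_2^\top)^k-I$ is invertible for every $k\ge 1$. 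Thus $(\T^{d_2},A_2)$ is ergodic and satisfies the dcc, and Theorem \ref{thm spec intro} gives that it satisfies partial specification.

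With both hypotheses of Proposition \ref{prop spec and dpm} in hand, applying it to $Y\le\T^d$ yields that $(\T^d,A)$ has dense periodic measures, completing the proof. There is no substantive obstacle at this stage: the entire difficulty has been absorbed into Theorem \ref{thm spec intro} and Proposition \ref{prop spec and dpm}. The only point worth stressing is the one flagged in the introduction --- one cannot simply combine the known dense-periodic-measure statements for unipotent toral automorphisms and for ergodic toral automorphisms, since dense periodic measures need not pass to products (Theorem \ref{thm product intro}); the argument works precisely because partial specification, unlike dense periodic measures, is stable under group extensions (Proposition \ref{prop extension has partial spec}), which is exactly what lets us feed the ergodic block into Proposition \ref{prop spec and dpm}.
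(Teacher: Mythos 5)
Your proof is correct and is exactly the argument the paper intends: the corollary is stated there as a direct consequence of Proposition \ref{prop spec and dpm}, obtained by taking $Y$ to be the subtorus carrying the block with no root-of-unity eigenvalues (ergodic and dcc, hence partial specification via Theorem \ref{thm spec intro}) and noting the quotient is a unipotent toral automorphism. Your verifications of ergodicity and the dcc, and your handling of the degenerate cases, fill in precisely the details the paper leaves implicit.
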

Notably, it seems to us that the current approach cannot be significantly simplified, even for the proof of this special case.

Before presenting the proof of Proposition \ref{prop spec and dpm}, we outline its main steps.
\begin{enumerate}
        \item We aim to approximate the measure $\m_{T,x,q}$ (see the paragraph preceding Theorem \ref{thm dense ergodic measures} for notation), where $q$ is a large integer and $x\in X$ is a generic point for some ergodic measure .
        Using Proposition \ref{prop interval perm} for $X/Y$, we construct a periodic point $z\in X$ such that $\m_{T,z+Y,q}$ approximates $\m_{T,x+Y,q}$,
        and such that long orbit segments of $z+Y$ of length $K$ trace segments of the orbit of $x+Y$.
        \item To lift the approximation from $X/Y$ to $X$, we introduce elements $y_i\in Y$ to account for the discrepancy between $T^{iK} z$ and the corresponding points in the orbit of $x$. Then, using the partial specification in $Y$, we construct a single periodic point $w\in Y$ that approximates the orbit segments of the $y_i$.

        The key point here is that both $z$ and $w$ have period $q$; see the beginning of Subsection \ref{subsec dpm unipotent} for further discussion.
        \item We finally show that the measure $\m_{T,z-w,q}$ approximates $\m_{T,x,q}$.
    \end{enumerate}
\begin{proof}[Proof of Proposition \ref{prop spec and dpm}]
    If $Y = X$, then the density of periodic measures follows from the partial specification (as in the proof of Theorem \ref{thm dense ergodic measures}). Otherwise, $Y$ is a proper subgroup, and
    we may omit the isomorphism and assume that $X/Y = \T^d$ for some $d\in \N$, and that $T$ induces a unipotent automorphism on $X/Y$.
    Let $\lambda\in \M_T(X)$, which we may assume to be ergodic by Remark \ref{remark dpm}. Let $\mu$ denote its projection on $X/Y$; then $\mu$ is also ergodic.
    Let $0<\varepsilon<1$, and let $0<\delta <\varepsilon$ be provided by Lemma \ref{lemma point measure distance bound} (for the space $X$).
    Let $N,M\in \N$ denote the parameters corresponding to $\delta/2$ for the partial specification in $Y$ (Definition \ref{def partial specification}). Choose a collection of periods $\PP$ such that $\PP\subseteq c\N$, where $c$ is the constant from Proposition \ref{prop interval perm} corresponding to $T$ and $\mu$.
    Fix an integer $K>\lceil M/\varepsilon\rceil$ coprime to the number of connected components of $\supp(\mu)$ (which is finite by Proposition \ref{prop measure classification}). Choose $0 < \eta < \varepsilon$ such that for every $k=0,\dots,K-1$ and $x_1,x_2\in X$,
    \begin{equation}\label{eq new new eq eta delta}
        d_X(x_1,x_2)<\eta \Longrightarrow d_{X}(T^k x_1,T^k x_2) < \frac{\delta}{2}.
    \end{equation}
    Let $x\in X$ be a $\lambda$-generic point. Then $x+Y$ is $\mu$-generic. Apply Proposition \ref{prop interval perm} with $T,\mu,x+Y,K,\eta$ to obtain $z\in X$, an arbitrarily large $q\in c\N$ and a permutation $\pi$ of $\{0,\dots,q-1\}\cap K\N$ such that $z+Y$ is of period $q$ and $\lvert\Lambda\rvert > (1-\eta)q/K$, where
    \[\Lambda\coloneqq \{0\le i< \frac{q}{K}:d_{X/Y}(T^{iK} z+Y, T^{\pi(iK)}x+Y) < \eta\}.\]
    Invoking Lemma \ref{lemma periodic point in coset} and replacing $z$ if necessary, we may assume that $T^{q}z=z$ (see Remark \ref{remarks partial spec}\eqref{remark partial spec ergodic}).
    For every $i\in \Lambda$ there exists $y_i\in Y$ such that
    \begin{equation}
        d_X(T^{iK} z - T^{\pi(iK)}x, y_i)<\eta,
    \end{equation}
    and Equation \eqref{eq new new eq eta delta} implies that for $k = 0,\dots,K-1$ we have
    \begin{equation}\label{eq z,x distance}
        d_{X}(T^{iK+k} z - T^{\pi(iK)+k}x, T^k y_i) < \frac{\delta}{2}.
    \end{equation}
    Recall that $N,M\in\N$ are the partial specification parameters defined in the beginning of the proof. Since $q$ can be any arbitrarily large number in $c\N$, by enlarging $q$ if necessary we may assume that $q\ge N$ and that $q\in \PP\subseteq c\N$.
    Define $r\in \N$ and $a_0 < b_0 <\cdots < a_{r} < b_{r}$ as follows: $a_i = iK$ and $b_i = (i+1)K - M$. We choose $r$ to be the maximal integer such that $(1+\frac{\delta}{2})b_{r} \le q$.
    Consider the $M$-spaced specification in $Y$ defined by
    \begin{equation*}
        \{(T^{-a_i} y_i; a_i, b_i)\}_{i=0}^{r},
    \end{equation*}
    then there exists $w\in Y$ of period $q$ that $\delta/2$-partially traces this specification.
    Write $\Lambda_i\subseteq [a_i,b_i)$ for the collection of tracing indices,
    so that for every $i=0,\dots,r$ and $n\in \Lambda_i$,
    \begin{equation}\label{eq y_i w distance}
        d_X(T^{n-a_i} y_i, T^n w) < \frac{\delta}{2}.
    \end{equation}
    Then $z-w$ is of period $q$. We aim to show that $\m_{T, z-w, q}$ approximates $\m_{T, x, q}$.
    
    Observe that for $i\in \Lambda\cap\{0,\dots,r\}$ and $n\in \Lambda_i$,
    \begin{align}\label{eq distance z-w,x}
        & d_X(T^n(z-w), T^{n-a_i+\pi(a_i)}x) = d_X(T^n z - T^{n-a_i+\pi(a_i)}x, T^n w) \le \\
        & d_X(T^n z - T^{n-a_i+\pi(a_i)}x, T^{n-a_i}y_i) + d_X(T^{n-a_i}y_i, T^n w) < \delta, \nonumber
    \end{align}
    where the last inequality was obtained by applying Equations \eqref{eq z,x distance} (with $k = n - a_i$) and \eqref{eq y_i w distance}.
    Let $\tilde{\pi}$ be a permutation of $\{0,\dots,q-1\}$ that extends $\pi$ and satisfies $\tilde{\pi}(a_i + k) = \pi(a_i) + k$ for every $i=0,\dots,r$ and $k=0,\dots, K-1$. Then
    \begin{equation*}
        d_{\M_T(X)} (\m_{T, z-w, q}, \m_{T, x, q}) = d_{\M_T(X)} (\m_{T, z-w, q}, \frac{1}{q}\sum_{n=0}^{q-1}\delta_{T^{\tilde{\pi}(n)}x}).
    \end{equation*}
    Applying Lemma \ref{lemma point measure distance bound} and then Equation \eqref{eq distance z-w,x} we obtain that the above distance is bounded above by
    \begin{equation}\label{eq upper bound z-w}
        \varepsilon + \frac{1}{q}{\Big\lvert} \{0\le n < q : d_{X}(T^n (z-w), T^{\tilde{\pi}(n)}x) \ge \delta\}{\Big\rvert} \le \varepsilon + \frac{1}{q}{\Big\lvert} \{0,\dots,q-1\}\setminus \bigcup_{i\in \Lambda\cap\{0,\dots,r\}}\Lambda_i {\Big\rvert},
    \end{equation}
    so we just need to estimate the size of the last set. Note that
    \begin{equation*}
        {\Big\lvert} \bigcup_{i\in \Lambda\cap\{0,\dots,r\}}\Lambda_i {\Big\rvert} \ge  (1-\varepsilon)\sum_{i\in \Lambda\cap\{0,\dots,r\}}(b_i - a_i) = (1-\varepsilon)(K-M){\big\lvert} \Lambda\cap\{0,\dots,r\} {\big\rvert}.
    \end{equation*}
    Recalling that $M<\varepsilon K$ and
    that the complement of $\Lambda$ in the set $\{i\in \Z: 0\le i < \frac{q}{K}\}$ (which contains $\{0,\dots,r\}$) has size less than $\frac{q}{K}\eta$, we obtain
    \[(K-M){\big\lvert} \Lambda\cap\{0,\dots,r\} {\big\rvert} > (1-\varepsilon)K (r+1-\frac{q}{K}\eta) > (1-\varepsilon) (K(r+1) - q\varepsilon).\]
    Note that the choice of $r$ implies that $(1+\frac{\delta}{2})(b_{r} + K) > q$ and therefore by the choice of $b_r$ that $(1+\varepsilon)K(r+1) > q - (1+\varepsilon)K$. Enlarging $q$ once again if necessary, we may assume that $K/q < \varepsilon$, and hence
    \[K(r+1) > \frac{q}{1+\varepsilon} - K > (\frac{1}{1+\varepsilon}-\varepsilon)q.\]
    Combining the last three equations, we see that
    \[{\Big\lvert} \bigcup_{i\in \Lambda\cap\{0,\dots,r\}}\Lambda_i {\Big\rvert} \ge (1-\varepsilon)^2(\frac{1}{1+\varepsilon}-2\varepsilon)q.\]
    Plugging the last equation into Equation \eqref{eq upper bound z-w}, it follows that
    \[d_{M_T(X)} (\m_{T, z-w, q}, \m_{T, x, q}) \le \varepsilon + 1 - (1-\varepsilon)^2(\frac{1}{1+\varepsilon}-2\varepsilon),\]
    which is arbitrarily small. Since, by enlarging $q$ if necessary, we may assume that $\m_{T, x, q}$ is arbitrarily close to $\lambda$, we conclude that $\lambda\in \overline{\M_T^{\fs}(X)}$.
    \end{proof}
We need two lemmas before proving Theorem \ref{thm dpm intro}.
\begin{lemma}[cf.\ {\cite[Lemma 8.2]{levit_vigdorovich}}]\label{lemma finite index dpm}
    Let $(X,T)$ be a dynamical system and $k\in \N$. If $(X,T^k)$ has dense periodic measures then so does $(X,T)$.
\end{lemma}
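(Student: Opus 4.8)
The plan is to reduce the case of $(X,T)$ to the case of $(X,T^k)$ by relating the two invariant-measure simplices. The key observation is that $\M_T(X)\subseteq \M_{T^k}(X)$: every $T$-invariant measure is $T^k$-invariant. Conversely, given any $\nu\in\M_{T^k}(X)$, the average $A(\nu)\coloneqq\frac1k\sum_{j=0}^{k-1} T^j_*\nu$ lies in $\M_T(X)$, and the map $A:\M_{T^k}(X)\to\M_T(X)$ is continuous (each $T^j_*$ is continuous on $\M(X)$ in the weak-* topology) and is a retraction: if $\mu\in\M_T(X)$ then $T^j_*\mu=\mu$ for all $j$, so $A(\mu)=\mu$.

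First I would take an arbitrary $\mu\in\M_T(X)$ and $\varepsilon>0$, fixing a metric $d_{\M(X)}$ inducing the weak-* topology. Since $\mu\in\M_{T^k}(X)$ and $(X,T^k)$ has dense periodic measures, there is a finitely supported $\nu\in\M_{T^k}^{\fs}(X)$ with $d_{\M(X)}(\nu,\mu)$ small. Applying the averaging map $A$, and using that $A$ is continuous with $A(\mu)=\mu$ — so in particular $A$ is (uniformly) continuous on the compact space $\M(X)$, hence maps a sufficiently small neighborhood of $\mu$ into an $\varepsilon$-neighborhood of $A(\mu)=\mu$ — we conclude $d_{\M(X)}(A(\nu),\mu)<\varepsilon$. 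It remains to check that $A(\nu)=\frac1k\sum_{j=0}^{k-1} T^j_*\nu$ is itself finitely supported and $T$-invariant. Invariance under $T$ is immediate from the definition. For finite support: if $\nu=\sum_{x\in F} c_x\delta_x$ for a finite set $F\subseteq X$, then $T^j_*\nu=\sum_{x\in F} c_x\delta_{T^j x}$, so $A(\nu)$ is supported on the finite set $\bigcup_{j=0}^{k-1} T^j F$. Hence $A(\nu)\in\M_T^{\fs}(X)$, and since $\varepsilon$ and $\mu$ were arbitrary, $\M_T^{\fs}(X)$ is dense in $\M_T(X)$.

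I do not expect any genuine obstacle here; the only point requiring a little care is the uniform continuity argument: one should phrase it as choosing, for the given $\varepsilon$, a $\delta>0$ such that $d_{\M(X)}(\rho,\mu)<\delta$ implies $d_{\M(X)}(A(\rho),A(\mu))<\varepsilon$, which is legitimate since $A$ is continuous on a compact metric space and hence uniformly continuous. Everything else is a direct verification. (The statement can also be found as the easy direction of standard facts relating $\M_T(X)$ and $\M_{T^k}(X)$; compare \cite[Lemma 8.2]{levit_vigdorovich}.)
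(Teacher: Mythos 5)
Your argument is correct and is essentially identical to the paper's proof: approximate $\mu\in\M_T(X)\subseteq\M_{T^k}(X)$ by finitely supported $T^k$-invariant measures and push them through the averaging map $\nu\mapsto\frac1k\sum_{j=0}^{k-1}T^j_*\nu$, which fixes $\mu$ and preserves finite support. The paper phrases the final step as convergence of a sequence rather than via uniform continuity, but this is only a cosmetic difference.
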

\begin{proof}
    Let $\mu\in \M_T(X)$. Then $\mu\in \M_{T^k}(X)$ as well, so there exists a sequence $(\mu_n)$ in $\M_{T^k}^{\fs}(X)$ such that $\mu_n \to \mu$.
    Then $\frac{1}{k}\sum_{j=0}^{k-1} T^j_*\mu_n \in \M_T^{\fs}(X)$ and converges to $\mu$ as $n\to\infty$.
\end{proof}
Although the next lemma appears implicitly in the literature (cf.\ \cite{lawton}), we choose to state it explicitly here.
\begin{lemma}\label{lemma dcc and fg}
    Let $(X,T)$ be an abelian group dynamical system, and let $H= \Z\ltimes \widehat{X}$, where the action of $1\in \Z$ on $\widehat{X}$ is given by $\widehat{T}$. Then $(X,T)$ satisfies the dcc if and only if $H$ is finitely generated.

    Consequently, if $(X,T)$ satisfies the dcc, then so does $(X,T^n)$ for any $n\in \N$.
\end{lemma}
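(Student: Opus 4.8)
The plan is to translate both conditions into statements about the dual $R_1$-module, where $R_1=\Z[u,u^{-1}]$ is as in Section~\ref{Section automorphisms structure} and $u$ acts on $\widehat{X}$ via $\widehat{T}$. Note first that the $R_1$-submodules of $\widehat{X}$ are precisely the $\widehat{T}$-invariant subgroups: a subgroup $M\le\widehat{X}$ is an $R_1$-submodule iff $\widehat{T}M\subseteq M$ and $\widehat{T}^{-1}M\subseteq M$, i.e.\ iff $\widehat{T}M=M$. Pontryagin duality gives an inclusion-reversing bijection $Y\mapsto Y^{\perp}$ between closed subgroups of $X$ and subgroups of the discrete group $\widehat{X}$, and since $(TY)^{\perp}=\widehat{T}^{-1}(Y^{\perp})$ it restricts to a bijection between $T$-invariant closed subgroups of $X$ and $\widehat{T}$-invariant subgroups of $\widehat{X}$. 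Under this bijection a descending chain of $T$-invariant closed subgroups of $X$ corresponds to an ascending chain of $\widehat{T}$-invariant subgroups of $\widehat{X}$, and one stabilizes iff the other does. Hence $(X,T)$ satisfies the dcc if and only if $\widehat{X}$ is a Noetherian $R_1$-module.

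Next I would relate the finite generation of $H$ to that of the $R_1$-module $\widehat{X}$. Write $t\in H$ for the generator of the $\Z$-factor, so conjugation by $t$ acts on $\widehat{X}$ as $\widehat{T}$. If $a_1,\dots,a_\ell\in\widehat{X}$ generate $\widehat{X}$ as an $R_1$-module, then $\{t,a_1,\dots,a_\ell\}$ generates $H$. Conversely, suppose $H$ is finitely generated. Since the images of a finite generating set under the projection $H\to H/\widehat{X}=\Z$ generate $\Z$, after replacing the generators by suitable products we may assume the generating set has the form $\{s,a_1,\dots,a_\ell\}$ with $s$ mapping to $1\in\Z$ and $a_1,\dots,a_\ell\in\widehat{X}$. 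The key observation is that conjugation by $s$ also acts on $\widehat{X}$ as $\widehat{T}$: writing $s=tb$ with $b\in\widehat{X}$, for every $a\in\widehat{X}$ we get $sas^{-1}=t\,(bab^{-1})\,t^{-1}=tat^{-1}=\widehat{T}(a)$, since $\widehat{X}$ is abelian. Consequently $H=\langle s\rangle\ltimes(\sum_{i=1}^{\ell}R_1 a_i)$, and comparing with $H=\langle s\rangle\ltimes\widehat{X}$ forces $\sum_i R_1 a_i=\widehat{X}$. Thus $H$ is finitely generated if and only if $\widehat{X}$ is a finitely generated $R_1$-module.

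Now $R_1=\Z[u,u^{-1}]$ is Noetherian, being a localization of the Noetherian ring $\Z[u]$ (Hilbert's basis theorem), and over a Noetherian ring a module is finitely generated if and only if it is Noetherian. Combining this with the previous two paragraphs yields
\[
(X,T)\text{ satisfies the dcc}\iff\widehat{X}\text{ is a Noetherian }R_1\text{-module}\iff\widehat{X}\text{ is a finitely generated }R_1\text{-module}\iff H\text{ is finitely generated},
\]
which is the first assertion. For the consequence, observe that $\widehat{T^n}=\widehat{T}^n$, so the group attached to $(X,T^n)$ is $H_n\coloneqq\Z\ltimes\widehat{X}$ with $1\in\Z$ acting via $\widehat{T}^n$. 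Inside $H$ consider the subgroup $H'=\langle t^n,\widehat{X}\rangle=\langle t^n\rangle\ltimes\widehat{X}$; since $t$ has infinite order and $t^n$ acts on $\widehat{X}$ as $\widehat{T}^n$, we have $H'\cong H_n$, and $[H:H']=n$ because $\widehat{X}\le H'$ and $[\langle t\rangle:\langle t^n\rangle]=n$. As a finite-index subgroup of a finitely generated group is finitely generated, the dcc for $(X,T)$ gives that $H$ is finitely generated, hence so is $H'\cong H_n$, hence $(X,T^n)$ satisfies the dcc. The only mildly delicate point in all of this is the normalization of the generating set of $H$ in the second paragraph, together with the observation that any lift of $1\in\Z$ acts on $\widehat{X}$ exactly as $\widehat{T}$; everything else is a matter of assembling standard facts about Pontryagin duality, Noetherian modules, and finite-index subgroups.
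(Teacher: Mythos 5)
Your proof is correct, and it takes a genuinely different route from the paper for the main equivalence. The paper argues group-theoretically: for the direction ``$H$ finitely generated $\Rightarrow$ dcc'' it passes to annihilators, observes that $\widehat{T}$-invariant subgroups of $\widehat{X}$ are normal in $H$, and invokes Hall's theorem that ascending chains of normal subgroups in a finitely generated metabelian group stabilize; for the converse it extracts an infinite ascending chain of subgroups of the form $\Z\ltimes G_i$ directly from the failure of finite generation. You instead set up a clean three-way dictionary: dcc for $(X,T)$ $\Leftrightarrow$ $\widehat{X}$ is a Noetherian $R_1$-module $\Leftrightarrow$ $\widehat{X}$ is a finitely generated $R_1$-module $\Leftrightarrow$ $H$ is finitely generated, where the middle equivalence rests on the Noetherianity of $R_1=\Z[u,u^{-1}]$ (Hilbert basis theorem plus localization). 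This is more self-contained -- you are in effect re-proving the special case of Hall's theorem that the paper cites as a black box, since Hall's argument for metabelian groups reduces to exactly this Noetherianity of the Laurent polynomial ring -- and it handles both directions of the equivalence simultaneously. The details you flag as delicate (normalizing the generating set of $H$ so that one generator projects to $1\in\Z$ and the rest lie in $\widehat{X}$, and checking that any lift of $1$ conjugates $\widehat{X}$ by $\widehat{T}$ because $\widehat{X}$ is abelian) are handled correctly, as is the duality bookkeeping $(TY)^{\perp}=\widehat{T}^{-1}(Y^{\perp})$. Your treatment of the final assertion, via the index-$n$ subgroup $\langle t^n\rangle\ltimes\widehat{X}\cong H_n$, coincides with the paper's.
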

\begin{proof}
    Suppose first that $H$ is finitely generated. Let
    $X\ge X_1\ge X_2\ge\cdots$ be a chain of $T$-invariant closed subgroups, then their annihilators $X_1^\bot\le X_2^\bot\le\cdots\le \widehat{X}$ form a non-decreasing chain of $\widehat{T}$-invariant subgroups. Conjugating elements of $\widehat{X}$ by $n\in \Z$ corresponds to applying $\widehat{T}^n$, and it follows that all $X_i^\bot$ are normal subgroups of $H$. By a theorem of Hall \cite[Theorem 3]{hall} (see also \cite[15.3.1]{a_course_groups}), every non-decreasing chain of normal subgroups of $H$ stabilizes, implying that $(X,T)$ satisfies the dcc.
    
    Conversely, suppose that $H$ is not finitely generated. Then there exists an infinite chain of subgroups $\Z\ltimes G_1 \lneq \Z\ltimes G_2\lneq\cdots\lneq H$, and it follows that $X\gneq G_1^\bot \gneq G_2^\bot \gneq\cdots$ is an infinite chain of $T$-invariant closed subgroups of $X$.

    The final part of the lemma follows from the fact that $(X,T^n)$ corresponds to the finite index subgroup $n\Z\ltimes \widehat{X}$ of $H$, and is therefore finitely generated whenever $H$ is.
\end{proof}
The part of Theorem \ref{thm dpm intro} dealing with ergodic measures has already been proved (Theorem \ref{thm dense ergodic measures}). The following result will therefore complete the proof of the theorem.
\begin{theorem}
    Let $(X,T)$ be an abelian group dynamical system satisfying the dcc. Then $(X,T)$ has dense periodic measures.
\end{theorem}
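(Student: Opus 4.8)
The plan is to feed the structure theorem for non-ergodic systems (Proposition \ref{prop nonergodic structure}) into Proposition \ref{prop spec and dpm}, and then to push dense periodic measures through the residual finite extension by a coset analysis. First I would apply Proposition \ref{prop nonergodic structure} to obtain $T$-invariant closed subgroups $X_2\le X_1\le X$ with $X/X_1$ finite, $(X_1/X_2,T)$ isomorphic to a torus $\T^d$ with an automorphism $A$ all of whose eigenvalues are roots of unity, and $(X_2,T)$ ergodic. Since $A$ is then quasi-unipotent and $X/X_1$ is a finite group, there is $k\in\N$ such that $T^k$ acts unipotently on $X_1/X_2$ and fixes every coset of $X_1$ in $X$. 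By Lemma \ref{lemma dcc and fg} the system $(X,T^k)$ still satisfies the dcc, and by Lemma \ref{lemma finite index dpm} it suffices to show that $(X,T^k)$ has dense periodic measures; so I would replace $T$ by $T^k$, assuming from now on that $T$ is unipotent on $X_1/X_2$ and fixes every coset of $X_1$.

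The building block is: for \emph{every} $j\in\N$, the system $(X_1,T^j)$ has dense periodic measures. Indeed, $(X_2,T^j)$ still satisfies the dcc (Lemma \ref{lemma dcc and fg}) and is still ergodic — a $\widehat T^j$-periodic character of $\widehat{X_2}$ is in particular $\widehat T$-periodic, hence trivial by the ergodicity of $(X_2,T)$ — so by Theorem \ref{thm spec intro} it satisfies partial specification; and $(X_1/X_2,T^j)$ is a torus with a unipotent automorphism. Proposition \ref{prop spec and dpm} then yields that $(X_1,T^j)$ has dense periodic measures.

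It remains to pass from $X_1$ to $X$, which is the main obstacle. Write $X$ as the disjoint union of the finitely many cosets $C_1,\dots,C_\ell$ of $X_1$, each $T$-invariant. By Remark \ref{remark dpm} it is enough to approximate every ergodic measure in $\M_T(X)$, and since such a measure is supported on a single $C_i$, it suffices to show that each subsystem $(C_i,T)$ has dense periodic measures. Fixing a representative $x_0\in C_i$, the map $T$ on $C_i$ is conjugate, via $y\mapsto x_0+y$, to the \emph{affine} transformation $S(y)=Ty+a$ of $X_1$, where $a=Tx_0-x_0\in X_1$; thus the coset dynamics is affine rather than linear, and $S$ need not even preserve $X_2$. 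The idea is to remove the affine part by passing to a power of $S$ and re-choosing $x_0$. First I would adjust $x_0$ within $C_i$ so that the image $\bar a$ of $a$ in $X_1/X_2=\T^d$ is a torsion point: this is possible because $m\bar a\in(\bar T-I)(\T^d)$ with $m=|X/X_1|$, so the class of $\bar a$ in the quotient torus $\T^d/(\bar T-I)(\T^d)$ is torsion. With $n$ the order of $\bar a$ and $j$ a suitable multiple of $n$ — chosen so that $\sum_{i=0}^{j-1}\bar T^i$ annihilates the $n$-torsion of $\T^d$, which is possible since $\bar T-I$ is nilpotent — the translation part $c:=\sum_{i=0}^{j-1}T^i a$ of $S^j$ lies in $X_2$. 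Finally I would adjust $x_0$ by an element of $X_2$ to make $c=0$, which is possible because the ergodicity of $(X_2,T^j)$ makes $y\mapsto T^j y-y$ surjective on $X_2$ (as in the proof of Lemma \ref{lemma periodic point in coset}) and each such adjustment changes $c$ by $(T^j-I)y$ while leaving $\bar a$ unchanged. After these adjustments $S^j=T^j$ on $X_1$, so $(X_1,S^j)$ has dense periodic measures by the previous paragraph, hence so does $(X_1,S)$ by Lemma \ref{lemma finite index dpm}, hence so does $(C_i,T)$. Since every ergodic measure on $X$ is supported on one of the $C_i$, $(X,T)$ has dense periodic measures, completing the proof. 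The delicate point, and the one I would expect to require the most care, is exactly this last paragraph: all the preceding machinery is phrased for group automorphisms, whereas the cosets carry affine maps, and one must check that a power of the affine map genuinely coincides with a power of $T$ after the two base-point corrections (using quasi-unipotence on $X_1/X_2$ and ergodicity on $X_2$).
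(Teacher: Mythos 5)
Your proof is correct, and its skeleton coincides with the paper's: apply Proposition \ref{prop nonergodic structure}, pass to a power of $T$ that is unipotent on $X_1/X_2$ (Lemmas \ref{lemma finite index dpm} and \ref{lemma dcc and fg}), feed Theorem \ref{thm spec intro} into Proposition \ref{prop spec and dpm} to handle $(X_1,T^j)$, and then treat the finitely many $T$-invariant cosets of $X_1$ one at a time, using the fact that an ergodic measure lives on a single coset. The one place where you genuinely diverge is the coset step. The paper simply invokes the known density of periodic \emph{points} in dcc systems to pick a periodic representative $x_i$ in each coset and a common period $n$; then $x\mapsto x_i+x$ conjugates $(x_i+X_1,T^n)$ to $(X_1,T^n)$ and the affine issue never arises. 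You instead reconstruct such a representative by hand: writing the coset dynamics as $S(y)=Ty+a$, you kill $\bar a$ up to torsion using $m\bar a=(\bar T-I)(m\bar{x}_0)$ and divisibility of $\T^d$, kill the torsion part via $\sum_{i=0}^{j-1}\bar T^i=\sum_l\binom{j}{l+1}N^l$ with $N=\bar T-I$ nilpotent, and kill the remaining $X_2$-part via surjectivity of $T^j-I$ on the ergodic factor (the mechanism of Lemma \ref{lemma periodic point in coset}). Note that your three corrections amount to showing $T^jx_0=x_0$ for the adjusted $x_0$, i.e.\ you have re-proved the existence of a periodic point in each coset in this structured situation. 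What your route buys is self-containedness (no appeal to the classical periodic-point density result of Laxton--Parry and Kitchens); what it costs is the extra affine bookkeeping, all of which I checked and which does go through.
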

\begin{proof}
    Let $X_2\le X_1 \le X$ be a chain of $T$-invariant closed subgroups as provided by Proposition \ref{prop nonergodic structure}.
    It is known that abelian group dynamical systems satisfying the dcc admit dense periodic \textit{points} \cite{laxton, kitchens}, \cite[Theorem 5.7]{schmidt_book}. Thus, we can choose periodic points $x_1,\dots,x_k\in X$ such that $\bigcup_{i=1}^k x_i +X_1 = X$.
    Choose $n\in\N$ that is a common period for $x_1,\dots,x_k$ and such that $T^n$ induces a unipotent automorphism on $X_1/X_2$. By Lemma \ref{lemma finite index dpm} it suffices to show that $(X,T^n)$ has dense periodic measures. As $T$ is an automorphism of a compact abelian group, the ergodicity of $(X_2,T)$ implies the ergodicity of $(X_2,T^n)$ (see e.g.\ \cite[Theorem 1.6]{schmidt_book}), and the latter satisfies the dcc by Lemma \ref{lemma dcc and fg}.
    By Theorem \ref{thm spec intro}, $(X_2, T^n)$ satisfies partial specification, and then Proposition \ref{prop spec and dpm} implies that $(X_1,T^n)$ has dense periodic measures. Since $T^n x_i = x_i$ for each $i=1,\dots,k$, the map $x\mapsto x_i + x$ conjugates the systems $(X_1, T^n)$ and $(x_i + X_1, T^n)$, and therefore the latter has dense periodic measures as well.
    This concludes the proof since any ergodic measure in $\M_{T^n}(X)$ is supported on a single coset $x_i + X_1$ (see Remark \ref{remark dpm}).
\end{proof}
\section{Proofs of the other main results}\label{section other main proofs}
In this section, we prove the corollaries of Theorem \ref{thm dpm intro} that were stated in the introduction.
\begin{proof}[Proof of Corollary \ref{cor hs}]
    Let $\Z\ltimes G$ be finitely generated with $G$ a countable abelian group, and denote by $\phi$ the automorphism of $G$ corresponding to the action of $1\in \Z$.
    It follows from Lemma \ref{lemma dcc and fg} that the abelian group dynamical system $(\widehat{G},\widehat{\phi})$ satisfies the dcc. By Theorem \ref{thm dpm intro}, $(\widehat{G},\widehat{\phi})$ has dense periodic measures, and \cite[Proposition 10.2]{levit_vigdorovich} or \cite[Corollary 5.19]{eckhardt_shulman_amenable} assert that this is equivalent to $\Z\ltimes G$ being Hilbert-Schmidt stable.
\end{proof}
\begin{proof}[Proof of Corollary \ref{cor livshitz}]
    By \cite[Proposition 10.13]{katok_construct}, the following equality holds (in every compact metric space):
    \[\cl_{C(X)}\{P\circ T - P : P\in C(X)\} = \{f\in C(X):\int_X f\;d\mu = 0\text{ for all }\mu\in \M_T(X)\}.\]
    Since $(X,T)$ admits dense periodic measures (Theorem \ref{thm dpm intro}), the right-hand side is equal to
    \[\{f\in C(X):\int_X f\;d\mu = 0\text{ for all }\mu\in \M_T^{\fs}(X)\},\]
    which precisely corresponds to satisfying Equation \eqref{eq periodic sum} for every periodic point.
\end{proof}
\section{Products of systems with dense periodic measures}\label{section product}
This section is independent of the preceding ones and is devoted to proving Theorem \ref{thm product intro}, using only the definitions and notation introduced earlier.

We will say that a dynamical system $(Y,T)$ is a \emph{common subsystem} of the systems $(X_1,T_1)$ and $(X_2,T_2)$ if for $i=1,2$ there is a $T_i$-invariant closed subset $Y_i\subseteq X_i$ such that $(Y_i,T_i)$ is conjugate to $(Y,T)$. It will cause no confusion to omit the conjugating maps and treat $Y$ as a subset of $X_1$ and $X_2$ and $T$ as $\restr{T_i}{Y}$.
\begin{lemma}\label{lemma product no dpm}
    Let $(X_1,T_1)$ and $(X_2,T_2)$ be two dynamical systems that admit a common subsystem $(Y,T)$, such that there exists an ergodic measure $\mu\in \M_{T}(Y)$ which is supported on at least two points.
    Suppose in addition that for every pair of periodic points $x_1 \in X_1$ and $x_2\in X_2$, the least periods of $x_1$ and $x_2$ are coprime. Then $(X_1 \times X_2, T_1\times T_2)$ does not admit dense periodic measures.
\end{lemma}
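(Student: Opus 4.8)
The plan is to argue by contradiction. Suppose $(X_1\times X_2,T_1\times T_2)$ has dense periodic measures. Regarding $Y$ as a closed invariant subset of both $X_1$ and $X_2$ (and suppressing the conjugacies, as in the paper's convention), let $\mu_\Delta\in\M(X_1\times X_2)$ be the pushforward of $\mu$ under the map $y\mapsto(y,y)$. Since $T_1$ and $T_2$ both restrict to $T$ on $Y$, this map is a conjugacy from $(Y,T)$ onto the closed invariant subset $\{(y,y):y\in Y\}$ of $(X_1\times X_2,T_1\times T_2)$; hence $\mu_\Delta$ is $(T_1\times T_2)$-invariant and ergodic. By Remark \ref{remark dpm}, there is then a sequence $(\rho_n)$ of \emph{ergodic} finitely supported $(T_1\times T_2)$-invariant measures with $\rho_n\to\mu_\Delta$ in the weak-* topology.

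The key structural observation is that each $\rho_n$ is a \emph{product} measure. A finitely supported ergodic measure is the uniform measure on a single periodic orbit, say the $(T_1\times T_2)$-orbit of some $(x_1,x_2)$; then $x_1$ is $T_1$-periodic and $x_2$ is $T_2$-periodic, and by hypothesis their least periods $p_n$ and $q_n$ are coprime, so the least period of $(x_1,x_2)$ is $p_nq_n$. Since $k\mapsto(k\bmod p_n,\,k\bmod q_n)$ is a bijection $\Z/p_nq_n\Z\to\Z/p_n\Z\times\Z/q_n\Z$ by the Chinese Remainder Theorem, while $T_1^k x_1$ and $T_2^k x_2$ depend only on $k\bmod p_n$ and $k\bmod q_n$ respectively,
\[\rho_n=\frac{1}{p_nq_n}\sum_{k=0}^{p_nq_n-1}\delta_{(T_1^k x_1,\,T_2^k x_2)}=\Big(\tfrac{1}{p_n}\sum_{i=0}^{p_n-1}\delta_{T_1^i x_1}\Big)\times\Big(\tfrac{1}{q_n}\sum_{j=0}^{q_n-1}\delta_{T_2^j x_2}\Big)=:\nu_1^{(n)}\times\nu_2^{(n)}.\]
Applying the coordinate projections $\pi_i\colon X_1\times X_2\to X_i$ (continuous, hence weak-* continuous on measures) gives $\nu_1^{(n)}=(\pi_1)_*\rho_n\underset{n\to\infty}{\longrightarrow}(\pi_1)_*\mu_\Delta=\mu$ and likewise $\nu_2^{(n)}\underset{n\to\infty}{\longrightarrow}\mu$, where both marginals are viewed as measures supported on $Y$.

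Now the contradiction comes from the rigidity of independence: a product measure has independent marginals, while $\mu_\Delta$, being carried by the diagonal, does not. Fix $h\in C(Y)$ and use Tietze's theorem to extend it to $f\in C(X_1)$ and $g\in C(X_2)$ with $f|_Y=g|_Y=h$; write $f\otimes g$ for the function $(x_1,x_2)\mapsto f(x_1)g(x_2)$. Since each $\rho_n$ is a product,
\[\int f\otimes g\,d\rho_n=\Big(\int f\,d\nu_1^{(n)}\Big)\Big(\int g\,d\nu_2^{(n)}\Big)\underset{n\to\infty}{\longrightarrow}\Big(\int_Y h\,d\mu\Big)^2,\]
whereas weak-* convergence $\rho_n\to\mu_\Delta$ together with $\int f\otimes g\,d\mu_\Delta=\int_Y h(y)^2\,d\mu(y)$ yields $\int f\otimes g\,d\rho_n\to\int_Y h^2\,d\mu$. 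Hence $\int_Y h^2\,d\mu=(\int_Y h\,d\mu)^2$ for every $h\in C(Y)$, i.e.\ every continuous function on $Y$ is $\mu$-almost everywhere constant. As $C(Y)$ separates points of the compact metrizable space $Y$, this forces $\supp(\mu)$ to be a single point, contradicting the hypothesis that $\mu$ is supported on at least two points.

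The point to watch is that this argument genuinely needs the reduction to \emph{ergodic} finitely supported measures via Remark \ref{remark dpm}: a general finitely supported invariant measure for $T_1\times T_2$ is only a \emph{convex combination} of products, and such combinations can weak-* approximate $\mu_\Delta$, so one cannot play marginals against the diagonal directly. The coprimality hypothesis is exactly what is needed to make every ergodic finitely supported measure an honest product (through the Chinese Remainder Theorem), after which the independence of marginals does the work; everything else is routine.
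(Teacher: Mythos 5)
Your proof is correct, and its overall architecture coincides with the paper's: reduce to ergodic finitely supported measures via Remark \ref{remark dpm}, observe that such a measure is the uniform measure on a single periodic orbit of some $(x_1,x_2)$, and use the coprimality of the least periods (via the Chinese Remainder Theorem) to exhibit it as a product of the uniform orbit measures on the two coordinates. The only genuine difference is the concluding step. The paper isolates this as Lemma \ref{lemma product diagonal} and argues topologically: it picks two points of $\supp(\mu_\Delta)$ on the diagonal with disjoint closed neighborhoods $\overline{U_1},\overline{U_2}$, notes that $\nu_n^{(1)}(U_1)\,\nu_n^{(2)}(U_2)\to \mu_\Delta(\overline{U_1}\times\overline{U_2})=0$ by Portmanteau, and derives a contradiction with $\mu_\Delta(U_i\times U_i)>0$. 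You instead test against functions of the form $f\otimes g$ extending a single $h\in C(Y)$, obtaining $\int h^2\,d\mu=(\int h\,d\mu)^2$ for all $h$, i.e.\ zero variance, which forces $\mu$ to be a point mass. Both arguments are sound and of comparable length; the paper's version is stated so as to apply to an arbitrary sequence of product measures (without first extracting convergence of the marginals), whereas yours uses the marginal convergence $(\pi_i)_*\rho_n\to\mu$ but yields the slightly cleaner ``independence versus diagonal'' dichotomy in functional form. Your closing remark about why the reduction to ergodic finitely supported measures is essential (convex combinations of products can approximate the diagonal measure) is accurate and matches the role Remark \ref{remark dpm} plays in the paper's proof.
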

\begin{proof}
    Let $\mu$ be the measure mentioned above and define $\nu$ to be the diagonal embedding of $\mu$ in $X_1\times X_2$, namely, $\nu = \Delta_*\mu$ where $\Delta:Y \to X_1 \times X_2$ is defined by $x\mapsto (x,x)$.
    Suppose for contradiction that $(\nu_n)$ is a sequence of finitely supported measures in $\M_{T_1\times T_2}(X_1\times X_2)$ converging to $\nu$. Since $\nu$ is ergodic, Remark \ref{remark dpm} allows us to assume that every $\nu_n$ is ergodic as well, which means that it is supported on a single periodic orbit of some point $(x_1, x_2)$. Since the least periods of $x_1$ and $x_2$, denoted $q_1$ and $q_2$, are coprime, the least period of $(x_1,x_2)$ is $q_1q_2$. Consequently,
    \[\nu_n = \frac{1}{q_1q_2}\sum_{m=0}^{q_1-1}\sum_{k=0}^{q_2-1}\delta_{T_1^m x_1}\times \delta_{T_2^k x_2} = \m_{T_1,x_1, q_1} \times \m_{T_2,x_2, q_2}\]
    (see the discussion preceding Theorem \ref{thm dense ergodic measures} for the notation).
    However, the convergence of the product measures $(\nu_n)$ to the diagonally embedded measure $\nu$ contradicts Lemma \ref{lemma product diagonal} below.
\end{proof}
\begin{lemma}\label{lemma product diagonal}
    Let $X_1, X_2$ and $Y$ be compact metrizable spaces, with $Y$ embedded in both $X_1$ and $X_2$. Suppose $\nu$ is a Borel probability measure on $X_1\times X_2$ with $\supp(\nu) \subseteq \{(y,y):y\in Y\}$ and $\lvert \supp(\nu) \rvert \ge 2$. Then for any sequences $(\nu_n^i)\subseteq \M(X_i)$, $i=1,2$, it holds that $\nu_n^1 \times \nu_n^2 \centernot\longrightarrow  \nu$ as $n\to\infty$.
\end{lemma}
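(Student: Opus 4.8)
The plan is to argue by contradiction via the Portmanteau theorem, after first reducing everything to a single measure on $Y$. Since $Y$ is compact and each $X_i$ is Hausdorff, $Y$ is a closed subspace of $X_i$, and the diagonal map $\Delta\colon Y\to X_1\times X_2$, $\Delta(y)=(y,y)$, is a homeomorphism onto the closed set $\Delta(Y)$. As $\supp(\nu)\subseteq\Delta(Y)$, I would write $\nu=\Delta_*\mu$ for a Borel probability measure $\mu$ on $Y$ with $\supp(\mu)=\Delta^{-1}(\supp(\nu))$, so that $\lvert\supp(\mu)\rvert\ge 2$; moreover the two marginals of $\nu$ are the push-forwards of $\mu$ under the inclusions $\iota_i\colon Y\hookrightarrow X_i$.

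Next, I would assume for contradiction that $\nu_n^1\times\nu_n^2\to\nu$. Pushing forward under the two coordinate projections shows that the marginals converge, i.e.\ $\nu_n^i\to(\iota_i)_*\mu$ in $\M(X_i)$ for $i=1,2$. Fix a metric $d_Y$ on $Y$ and pick $y_1\ne y_2$ in $\supp(\mu)$; with $\rho=\tfrac{1}{3}d_Y(y_1,y_2)$ set $V_j=B_{d_Y}(y_j,\rho/2)$ and $U_j=B_{d_Y}(y_j,\rho)$, so that $\overline{V_j}\subseteq U_j$ and $U_1\cap U_2=\emptyset$, and put $\beta=\min(\mu(V_1),\mu(V_2))>0$ (positive since $y_j\in\supp(\mu)$). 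Using that $X_i$ is metrizable hence normal, and that $\overline{V_j}$ is compact while $Y\setminus U_j$ is closed in $X_i$ and disjoint from $\overline{V_j}$, I would choose an open set $W_j^{(i)}\subseteq X_i$ with $\overline{V_j}\subseteq W_j^{(i)}\subseteq\overline{W_j^{(i)}}\subseteq X_i\setminus(Y\setminus U_j)$; this forces $W_j^{(i)}\cap Y\supseteq V_j$ and $\overline{W_j^{(i)}}\cap Y\subseteq U_j$.

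Finally I would derive the contradiction from the open box $O=W_1^{(1)}\times W_2^{(2)}$. On one hand, the Portmanteau theorem applied to the open sets $W_j^{(i)}$ together with the marginal convergence gives $\liminf_n\nu_n^i(W_j^{(i)})\ge\mu(W_j^{(i)}\cap Y)\ge\beta$, so $(\nu_n^1\times\nu_n^2)(O)=\nu_n^1(W_1^{(1)})\,\nu_n^2(W_2^{(2)})\ge\beta^2/4$ for all large $n$. On the other hand, $\overline{O}=\overline{W_1^{(1)}}\times\overline{W_2^{(2)}}$, and any point $(y,y)\in\overline{O}\cap\Delta(Y)$ would satisfy $y\in(\overline{W_1^{(1)}}\cap Y)\cap(\overline{W_2^{(2)}}\cap Y)\subseteq U_1\cap U_2=\emptyset$; hence $\overline{O}\cap\supp(\nu)=\emptyset$ and $\nu(\overline{O})=0$. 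Then the Portmanteau theorem for the closed set $\overline{O}$ yields $\limsup_n(\nu_n^1\times\nu_n^2)(\overline{O})\le\nu(\overline{O})=0$, contradicting $(\nu_n^1\times\nu_n^2)(\overline{O})\ge(\nu_n^1\times\nu_n^2)(O)\ge\beta^2/4$.

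The step I expect to be the main obstacle is the point-set construction of the neighborhoods $W_1^{(1)}$ and $W_2^{(2)}$: they must simultaneously be large enough, relative to the inclusions $\iota_i$, to carry positive $\mu$-mass around $y_1$ and $y_2$ so that the marginals detect them, yet have closures meeting $Y$ only inside the disjoint sets $U_1$ and $U_2$, so that the closure of the product box stays off the diagonal. Everything else is routine bookkeeping with the Portmanteau theorem and the fact that weak-$*$ convergence of product measures passes to the marginals.
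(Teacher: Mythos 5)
Your proof is correct and follows essentially the same route as the paper's: both derive a contradiction from the Portmanteau theorem applied to a product box $\overline{U_1}\times\overline{U_2}$ built from disjoint neighborhoods of two distinct support points, which misses the diagonal and hence has $\nu$-measure zero. Your version is somewhat more careful on a point the paper's terser argument glosses over --- you separate $\overline{V_j}$ from $Y\setminus U_j$ inside the ambient space $X_i$ to obtain genuinely open sets for the lower Portmanteau bound, rather than working with sets that are only open in $Y$ --- but the underlying idea is identical.
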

\begin{proof}
    For $i=1,2$, choose a point $y_i\in Y$ and an open neighborhood $U_i\subseteq Y$ of $y_i$ such that $(y_i,y_i)\in \supp(\nu)$ and $\overline{U_1}\cap \overline{U_2}=\emptyset$ (since $Y$ is compact, the closures coincide in $Y$ and $X_i$).
    Suppose for contradiction that $\nu_n^1\times\nu_n^2\longrightarrow\nu$. Then by the Portmanteau theorem,
    \[\limsup_{n\to\infty} (\nu_n^1\times\nu_n^2)(\overline{U_1}\times\overline{U_2}) \le \nu(\overline{U_1}\times\overline{U_2})=0,\]
    and thus after passing to a subsequence, $\nu_n^i(U_i)\to 0$ for some $i$; without loss of generality, assume $i=1$. Another application of the Portmanteau theorem shows that $\nu(U_1\times U_1)=0$, contradicting the fact that $(y_1,y_1)\in \supp(\nu)$.
\end{proof}
Let $\mathcal{A}$ be a finite set of symbols, and $\mathcal{L}\subseteq \mathcal{A}^{\N_0}$ a collection of infinite words. The (\emph{bi-infinite}) \emph{subshift generated by $\mathcal{L}$} is defined as the set
\[\{x\in \mathcal{A}^\Z: \text{every finite subword of }x\text{ is a subword of some word in }\mathcal{L}\},\]
equipped with the topology induced from the product topology on $\mathcal{A}^\Z$.
Note that this is a shift-invariant compact metric space.
For $x\in \mathcal{A}^\Z$ and $i\in \Z$, we write $x[i]$ for the letter at position $i$, so that $x = \cdots x[-1]\,. x[0] \, x[1]\cdots$, where the dot indicates the zeroth coordinate. For integers $i<j$, we denote $x[i,j) \coloneqq x[i]\cdots x[j-1]$.

Let $u\in \{0,1\}^{\N_0}$ denote the Thue-Morse sequence:
\[u = 0 \,1\,1\,0\,1\,0\,0\,1\cdots,\]
and let $X_u\subseteq \{0,1\}^\Z$ be the subshift generated by $\{u\}$;
see, for example, \cite{fogg, queffelec} for further details on $X_u$.

Let $a$ be an additional symbol, and for each integer $m \ge 2$, define $x_m\in \{0,1,a\}^{\N_0}$ to be the infinite periodic word
\[x_m = u[0,m-1)\, a\,u[0,m-1)\,a\,\cdots,\]
and let $\tilde{x}_{m}\in \{0,1,a\}^{\Z}$ denote the bi-infinite word obtained by extending $x_m$ periodically.
For an integer $p\ge 2$, let $X_p\subseteq \{0,1,a\}^\Z$ be the subshift generated by $
\{x_{p^n}:n\in\N\}$.
We begin by classifying the elements of $X_p$.
\begin{lemma}\label{lemma elements of Xp}
    Let $p\ge2$ be an integer. Every element of $X_p$ is either:
    \begin{enumerate}
        \item a shift of $\tilde{x}_{p^n}$ for some $n$, or \label{item x lemma xp}
        \item equal to $u$ on the non-negative coordinates, or 
        \item an element of $X_u$.
    \end{enumerate}
\begin{proof}
    Let $y\in X_p$. By the definition of a generated subshift, there exists a sequence $(y_n)\subset \{0,1,a\}^\Z$ converging to $y$, where each $y_n$ is a shift of $\tilde{x}_{p^{m_n}}$ for some $m_n\in \N$.
    First suppose that $a$ does not occur in $y$.
    For every $k\in \N$ we have $y[-k,k] = y_n[-k,k]$ for all sufficiently large $n$, and it follows from the construction of $x_{p^{m_n}}$ that $y[-k,k]$ must be a subword of $u$. Therefore in this case $y\in X_u$.
    Now assume $a$ occurs in $y$, and by shifting $y$ we may also assume that $y[-1] = a$. If the sequence $(m_n)$ is bounded, then the sequence $(y_n)$ contains only finitely many distinct elements and hence $y$ is one of them; this corresponds to case \eqref{item x lemma xp}. If the sequence $(m_n)$ is unbounded, then the non-negative coordinates of $(y_n)$ converge to $u$.
\end{proof}
\end{lemma}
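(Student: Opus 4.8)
The plan is to combine the definition of a generated subshift with the compactness of $\{0,1,a\}^{\Z}$, and then to split into two cases according to whether the letter $a$ occurs in the given word. Fix $y\in X_p$. For every $k\in\N$ the finite block $y[-k,k]$ is, by definition of $X_p$, a subword of some $x_{p^{n_k}}$, hence of its periodic bi-infinite extension $\tilde{x}_{p^{n_k}}$; choosing a suitable power of the shift I obtain a shift $y_k$ of $\tilde{x}_{p^{n_k}}$ with $y_k[-k,k]=y[-k,k]$, so that $y_k\to y$. This is the approximating sequence of generators on which the rest of the argument runs.

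Suppose first that $a$ does not occur in $y$. Then each block $y[-k,k]$ is an $a$-free subword of $\tilde{x}_{p^{n_k}}$. The combinatorial point is that in $\tilde{x}_{p^m}$ the occurrences of $a$ are exactly the positions $\equiv -1\pmod m$, and the block lying strictly between two consecutive $a$'s is precisely $u[0,m-1)$; hence every $a$-free subword of $\tilde{x}_{p^m}$ is a subword of $u[0,m-1)$, and so of $u$. Thus every finite subword of $y$ is a subword of $u$, i.e.\ $y\in X_u$, which is alternative (3).

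Now suppose $a$ occurs in $y$. After replacing $y$ by an appropriate shift we may assume $y[-1]=a$; this is harmless because alternatives (1) and (3) are shift-invariant, while in alternative (2) the conclusion is simply read off at the new origin. Since $\tilde{x}_{p^m}$ is $p^m$-periodic with its $a$'s exactly at the positions $\equiv -1\pmod{p^m}$, the condition $y_k[-1]=a$ lets me modify the shift parameter defining $y_k$ by a multiple of the period, so that $y_k=\tilde{x}_{p^{n_k}}$ itself. Two sub-cases remain. If the sequence $(n_k)$ is bounded, then $(y_k)$ takes only finitely many values among $\tilde{x}_{p^{1}},\dots,\tilde{x}_{p^{N}}$, so its limit $y$ equals one of them, giving alternative (1). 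If $(n_k)$ is unbounded, pass to a subsequence along which $n_k\to\infty$; since $\tilde{x}_{p^m}$ restricted to $\{0,\dots,p^m-2\}$ is the length-$(p^m-1)$ prefix of $u$, for each fixed $j\ge 0$ we get $y[j]=y_k[j]=u[j]$ once $k$ is large enough, so $y$ agrees with $u$ on all non-negative coordinates, which is alternative (2).

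The only step that needs genuine care is the normalization in the last case: checking that pinning an occurrence of $a$ to position $-1$ determines the shift parameter of each $y_k$ modulo $p^{n_k}$, so that $y_k$ may indeed be taken to be $\tilde{x}_{p^{n_k}}$, and keeping track of how the preliminary shift of $y$ feeds into the three alternatives. The $a$-free case and the compactness set-up are otherwise the routine generated-subshift bookkeeping.
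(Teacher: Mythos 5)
Your proposal is correct and follows essentially the same route as the paper's proof: split on whether $a$ occurs, use that $a$-free blocks of $\tilde{x}_{p^m}$ are subwords of $u$, recenter an occurrence of $a$ at position $-1$, and then distinguish the bounded and unbounded cases for the exponents. The only differences are cosmetic — you make explicit the normalization $y_k=\tilde{x}_{p^{n_k}}$ (via the positions of the $a$'s modulo the period) and you flag the mild abuse in alternative (2) under the preliminary shift, both of which the paper leaves implicit.
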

The dynamical system in Theorem \ref{thm product intro} is given by the product $X_2\times X_3$. We first show that both factors admit dense periodic measures.
\begin{lemma}\label{Xp has dpm}
    Let $p\ge2$ be an integer and let $T:X_p\to X_p$ be the left shift. Then $(X_p, T)$ admits dense periodic measures.
\end{lemma}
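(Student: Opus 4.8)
The plan is to determine the ergodic measures of $(X_p,T)$ explicitly, to observe that all of them lie in the closure of $\M_T^{\fs}(X_p)$, and then to conclude with Remark \ref{remark dpm}. For each $n\in\N$ the bi-infinite word $\tilde x_{p^n}$ lies in $X_p$; since the block $u[0,p^n-1)\,a$ contains the letter $a$ exactly once, no proper divisor of $p^n$ can be a period of it, so $\tilde x_{p^n}$ has least period $p^n$ and its orbit $O_n$ consists of exactly $p^n$ points. The uniform measure $\mu_n\coloneqq\m_{T,\tilde x_{p^n},p^n}$ on $O_n$ is therefore a finitely supported invariant measure. On the other hand $X_u\subseteq X_p$ is a closed $T$-invariant subset, and the Thue--Morse subshift is uniquely ergodic (see \cite{queffelec, fogg}); let $\mu_u$ denote its unique invariant measure, which is then also ergodic as a measure on $X_p$.

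First I would show that $\{\mu_n:n\in\N\}\cup\{\mu_u\}$ is precisely the set of ergodic measures of $(X_p,T)$. Let $\nu$ be ergodic and consider the continuous function $g(y)=\mathbf 1[y[0]=a]$, so that $\frac1N\sum_{i=0}^{N-1}g(T^iy)$ is the asymptotic frequency of the letter $a$ among the non-negative coordinates of $y$. By Lemma \ref{lemma elements of Xp}, if $y$ is a shift of some $\tilde x_{p^m}$ this frequency equals $1/p^m>0$, while in the remaining two cases $y$ has no letter $a$ among its non-negative coordinates, so the frequency is $0$. By Birkhoff's theorem the frequency equals $\nu(\{y[0]=a\})$ for $\nu$-a.e.\ $y$; hence either $\nu(\{y[0]=a\})>0$, which forces $\nu\big(\bigcup_m O_m\big)=1$, or $\nu(\{y[0]=a\})=0$, which forces $\nu$ to be supported on the set of points of $X_p$ containing no letter $a$. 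In the first case, since each $O_m$ is a finite $T$-invariant set and the $O_m$ are disjoint, ergodicity implies that exactly one $O_m$ has full measure, whence $\nu=\mu_m$. In the second case $\supp(\nu)$ is contained in $X_u$ (a finite subword of an element of $X_p$ which omits the letter $a$ lies inside a single block $u[0,p^k-1)$ and hence is a subword of $u$), so $\nu=\mu_u$ by unique ergodicity.

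The key remaining step is that $\mu_n\to\mu_u$ in the weak-* topology as $n\to\infty$; together with the previous paragraph this shows that every ergodic measure of $(X_p,T)$ lies in $\overline{\M_T^{\fs}(X_p)}$, and Remark \ref{remark dpm} then gives dense periodic measures. To prove the convergence it suffices to check that $\mu_n([w])\to\mu_u([w])$ for every cylinder set $[w]$ determined by a finite word $w$, where $\mu_n([w])$ is the frequency of $w$ in one period of $\tilde x_{p^n}$. If $w$ contains the letter $a$, then since that letter occurs once per period, $w$ occurs at most $\lvert w\rvert$ times per period and $\mu_n([w])\le\lvert w\rvert/p^n\to0=\mu_u([w])$. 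If $w$ is a word over $\{0,1\}$, any occurrence of $w$ in $\tilde x_{p^n}$ must avoid the letter $a$ and hence lie inside a block $u[0,p^n-1)$, so $\mu_n([w])=\frac1{p^n}\#\{0\le i\le p^n-1-\lvert w\rvert:u[i,i+\lvert w\rvert)=w\}$, which converges to the frequency of $w$ in the Thue--Morse sequence, namely $\mu_u([w])$, by unique ergodicity of $X_u$ (equivalently, genericity of $u$ for $\mu_u$).

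The main obstacle is the classification of ergodic measures: one must rule out invariant measures supported on the ``transitional'' points of case (2) of Lemma \ref{lemma elements of Xp}, which are neither periodic nor lie in $X_u$. The point is that such points carry the letter $a$ on only finitely many coordinates, none of them positive, so they are invisible to the forward Birkhoff averages of $g$ and cannot support an invariant probability measure of positive total $a$-frequency. Everything else reduces to a routine cylinder count together with the classical fact that the Thue--Morse subshift is uniquely ergodic.
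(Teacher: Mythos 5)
Your proof is correct and follows essentially the same route as the paper: a case analysis based on Lemma \ref{lemma elements of Xp}, unique ergodicity of the Thue--Morse subshift, and approximation of its unique measure by the periodic measures $\m_{T,\tilde{x}_{p^n},p^n}$. You simply make explicit two points the paper treats briefly --- the full classification of ergodic measures via Birkhoff averages of the $a$-indicator, and the cylinder-by-cylinder verification that $\m_{T,\tilde{x}_{p^n},p^n}\to\mu_u$ --- both of which are carried out correctly.
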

\begin{proof}
    Let $\mu\in \M_T(X_p)$. By Remark \ref{remark dpm} we may assume that $\mu$ is ergodic, so let $y\in X_p$ be a $\mu$-generic point. Let us consider each of the cases described in the previous lemma. If $y$ is a shift of $\tilde{x}_{p^n}$ for some $n$, then $\mu$ is finitely supported, as $\tilde{x}_{p^n}$ is periodic, and there is nothing more to prove.
    If $y$ is equal to $u$ on the non-negative coordinates, then it is clear that the sequence of finitely supported $T$-invariant measures $(\m_{T, \tilde{x}_{p^n},p^n})_n$ also converges to $\mu$.
    Finally, suppose that $y\in X_u$. Since the left shift on $X_u$ is uniquely ergodic (see \cite{fogg,queffelec}), every point in $X_u$ is $\mu$-generic. In particular we can choose one that agrees with $u$ on the non-negative coordinates. The conclusion then follows as in the previous case.
\end{proof}
\begin{proof}[Proof of Theorem \ref{thm product intro}]
    Lemma \ref{Xp has dpm} asserts that $(X_2,T)$ and $(X_3, T)$ admit dense periodic measures.
    By Lemma \ref{lemma elements of Xp}, combined with the fact that $X_u$ contains no periodic points \cite{fogg}, the least periods of the periodic points in $X_2$ and $X_3$ are $\{2^n:n\in \N\}$ and $\{3^n:n\in \N\}$ respectively. Thus, taking $X_u$ as the common subsystem of $X_2$ and $X_3$, Lemma \ref{lemma product no dpm} implies that $(X_2\times X_3, T\times T)$ does not admit dense periodic measures.
\end{proof}

\bigskip

\DeclareEmphSequence{\itshape}
\bibliographystyle{amsalpha}
\bibliography{references}
\end{document}